\DeclareFontFamily{OMS}{rsfs}{\skewchar\font'60}
\DeclareFontShape{OMS}{rsfs}{m}{n}{<-5>rsfs5 <5-7>rsfs7 <7->rsfs10 }{}
\DeclareSymbolFont{rsfs}{OMS}{rsfs}{m}{n}
\DeclareSymbolFontAlphabet{\scr}{rsfs}
\renewcommand{\P}{\mathbb{P}} \newcommand{\C}{\mathbb{C}}
\renewcommand{\O}{\sO}
\newcommand{\Q}{\mathbb{Q}}
\newcommand\resto[1]{\hbox{\hbox{$\big\vert{}_{_{#1}}$}}}
\newcommand{\blc}{boundary-lc}
\newcommand{\acts}{\mbox{\raisebox{0.26ex} {\tiny {$\bullet$}}} }
\newcommand{\sA}{\scr{A}}
\newcommand{\sC}{\scr{C}}
\newcommand{\sF}{\scr{F}}
\newcommand{\sG}{\scr{G}}
\newcommand{\sI}{\scr{I}}
\newcommand{\sJ}{\scr{J}}
\newcommand{\sM}{\scr{M}}
\newcommand{\sN}{\scr{N}}
\newcommand{\sO}{\scr{O}}
\newcommand{\sT}{\scr{T}}
\newcommand{\bN}{\mathbb{N}}
\newcommand{\bP}{\mathbb{P}}
\newcommand{\bQ}{\mathbb{Q}}
\newcommand{\Top}{{\sf T}}
\DeclareMathOperator{\Exc}{Exc}
\DeclareMathOperator{\codim}{codim}
\DeclareMathOperator{\Ext}{Ext}
\DeclareMathOperator{\lc}{lc}
\DeclareMathOperator{\reg}{reg}
\DeclareMathOperator{\red}{red}
\DeclareMathOperator{\sing}{sing}
\DeclareMathOperator{\Sym}{Sym}
\DeclareMathOperator{\supp}{supp}
\DeclareMathOperator{\Id}{{Id}}
\newcommand{\into}{\hookrightarrow}
\newcommand{\wt}{\widetilde}
\newcommand{\wtilde}{\widetilde}
\newcounter{thisthm}
\newcommand{\ilabel}[1]{\newcounter{#1}\setcounter{thisthm}{\value{thm}}\setcounter{#1}{\value{enumi}}}
\newcommand{\iref}[1]{(\thesection.\the\value{thisthm}.\the\value{#1})}
\theoremstyle{plain}    
\newtheorem{thm}{Theorem}[section]
\newtheorem{defn}[thm]{Definition}
\numberwithin{equation}{thm}
\numberwithin{figure}{section}
\theoremstyle{plain}    
\newtheorem{cor}[thm]{Corollary}
\newtheorem{lem}[thm]{Lemma}
\newtheorem{fact}[thm]{Fact}
\theoremstyle{plain}    
\newtheorem{prop}[thm]{Proposition}
\newtheorem{proclaim-special}[thm]{\specialthmname}
\theoremstyle{remark}
\newtheorem{rem}[thm]{Remark}
\newtheorem{warning}[thm]{Warning}
\newtheorem{subrem}[equation]{Remark}
\newtheorem*{claim*}{Claim} 
\newtheorem{notation}[thm]{Notation}
\newtheorem{example}[thm]{Example}
\newtheorem{subexample}[equation]{Example}
\newtheoremstyle{bozont-remark}{3pt}{3pt}%
     {}
     {}
     {\it}
     {.}
     {.5em}
     {\thmname{#1}\thmnumber{ #2}: \thmnote{\sc #3}}
\theoremstyle{bozont-remark}
\def\factor#1.#2.{\left. \raise 2pt\hbox{$#1$} \right/\hskip -2pt\raise
  -2pt\hbox{$#2$}} 
\newlength{\swidth}
\newenvironment{enumerate-p}{
  \begin{enumerate}}
  {\setcounter{equation}{\value{enumi}}\end{enumerate}}
\date{\today}
\author{Daniel Greb}
\author{Stefan Kebekus}
\author{S\'andor J Kov\'acs}
\thanks{Stefan Kebekus was supported in part by the DFG-Forschergruppe
  ``Classification of Algebraic Surfaces and Compact Complex Manifolds''.  S\'andor
  Kov\'acs was supported in part by NSF Grant DMS-0554697 and the Craig McKibben and
  Sarah Merner Endowed Professorship in Mathematics.}
\address{Daniel Greb, Albert-Ludwigs-Universit\"at Freiburg, Mathematisches
  Institut, Eckerstrasse 1, 79104 Freiburg im Breisgau, Germany}
\email{\href{mailto:daniel.greb@math.uni-freiburg.de}{daniel.greb@math.uni-freiburg.de}}
\urladdr{\href{http://home.mathematik.uni-freiburg.de/dgreb}{http://home.mathematik.uni-freiburg.de/dgreb}}
\address{Stefan Kebekus, Albert-Ludwigs-Universit\"at Freiburg, Mathematisches
  Institut, Eckerstrasse 1, 79104 Freiburg im Breisgau, Germany}
\email{\href{mailto:stefan.kebekus@math.uni-freiburg.de}{stefan.kebekus@math.uni-freiburg.de}}
\urladdr{\href{http://home.mathematik.uni-freiburg.de/kebekus}{http://home.mathematik.uni-freiburg.de/kebekus}}
\address{S\'andor Kov\'acs, University of Washington,
  Department of Mathematics, Box 354350, Seattle, WA 98195, U.S.A.}
\email{\href{mailto:kovacs@math.washington.edu}{kovacs@math.washington.edu}}
\urladdr{\href{http://www.math.washington.edu/~kovacs}{http://www.math.washington.edu/$\sim$kovacs}}
\newcommand{\PreprintAndPublication}[2]{#1} 
\keywords{differential forms, log canonical, vanishing theorems}
\subjclass[2000]{14J17, 14F17}
\title[Extension theorems on log canonical varieties]{Extension
  theorems for differential forms and Bogomolov-Sommese vanishing on
  log canonical varieties}
\begin{document}

\begin{abstract}
  Given a normal variety $Z$, a $p$-form $\sigma$ defined on the
  smooth locus of $Z$, and a resolution of singularities $\pi :
  \wtilde Z \to Z$, we study the problem of extending the pull-back
  $\pi^*(\sigma)$ over the $\pi$-exceptional set $E \subset \wtilde
  Z$.

  For log canonical pairs and for certain values of $p$, we show that
  an extension always exists, possibly with logarithmic poles along
  $E$. As a corollary, it is shown that sheaves of reflexive
  differentials enjoy good pull-back properties. A natural
  generalization of the well-known Bogomolov-Sommese vanishing theorem
  to log canonical threefold pairs follows.
\end{abstract}

\maketitle
\tableofcontents

\section{Introduction and statement of main result}

\subsection{Introduction}

Let $Z$ be a normal projective variety and $\sigma \in H^0 \bigl( Z,\,
\Omega^{[p]}_Z \bigr)$ a $p$-form which is defined away from the
singularities. A natural question to ask is: If $\pi: \wtilde Z \to Z$
is a resolution of singularities, can one extend $\pi^*(\sigma)$ as a
differential form to all of $\wtilde Z$, perhaps allowing logarithmic
poles along the $\pi$-exceptional set?

If $p = \dim Z$ and if the pair $(Z, \emptyset)$ is log canonical, the
answer is ``yes'', almost by definition. For other values of $p$, the
problem has been studied by Hodge-theoretic methods ---see the papers
of Steenbrink \cite{Steenbrink85}, Steenbrink-van Straten \cite{SS85},
Flenner \cite{Flenner88} and the references therein. In a nutshell,
the answer is ``yes'' if the codimension of the singular set is large.

In this paper, we consider logarithmic varieties with log canonical
singularities. We show that for these varieties and certain values of
$p$, the answer is ``yes'', irrespective of the codimension of the
singular set.

As a corollary, we show that sheaves of reflexive differentials enjoy
good pull-back properties and prove a version of the well-known
Bogomolov-Sommese vanishing theorem for log canonical threefold pairs.

\subsection{Main results}\label{sec:mainresults}

The following is the main result of this paper. In essence, it asserts
that a (logarithmic) $p$-form defined away from the singular set of a
log canonical threefold pair gives rise to $p$-forms on any log
resolution.

\begin{thm}[Extension theorem for log canonical pairs]\label{thm:extension-lc} 
  Let $Z$ be a normal variety of dimension $n$ and $\Delta \subset Z$ a reduced
  divisor such that the pair $(Z, \Delta)$ is log canonical.  Let $\pi: \wtilde Z \to
  Z$ be a log resolution, and set
  $$
  \wtilde \Delta_{\lc} := \text{largest reduced divisor contained in }
  \pi^{-1}\bigl(\Delta \cup \text{non-klt locus of }(Z, \Delta)\bigr),
  $$
  where the non-klt locus is the minimal closed subset $W \subset Z$ such that
  that pair $(Z, \Delta)$ is klt away from $W$. If $p \in \{n, n-1, 1\}$, then
  the sheaf $\pi_* \Omega^p_{\wtilde Z}(\log \wtilde \Delta_{\lc})$ is
  reflexive.
\end{thm}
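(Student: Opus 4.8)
The plan is to reduce the reflexivity statement to a concrete extension problem for differential forms across exceptional divisors. Recall that a torsion-free coherent sheaf on a normal variety is reflexive precisely when it satisfies Serre's condition $S_2$, equivalently when its sections extend across closed subsets of codimension at least two. The sheaf $\pi_*\Omega^p_{\wtilde Z}(\log\wtilde\Delta_{\lc})$ is torsion-free, being the pushforward of a locally free sheaf under the dominant map $\pi$, so it suffices to fix a closed set $W\subseteq Z$ with $\codim_Z W\ge 2$ and to extend an arbitrary section $\sigma$ defined over $Z\setminus W$ across $W$. Unwinding the definition of the pushforward, $\sigma$ is a logarithmic $p$-form on $\wtilde Z\setminus\pi^{-1}(W)$, and the goal is to extend it over $\pi^{-1}(W)$ with at worst logarithmic poles along $\wtilde\Delta_{\lc}$.

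Because $\Omega^p_{\wtilde Z}(\log\wtilde\Delta_{\lc})$ is locally free on the smooth variety $\wtilde Z$, hence reflexive, the form $\sigma$ extends automatically across every part of $\pi^{-1}(W)$ of codimension at least two in $\wtilde Z$. The only obstruction thus comes from the divisorial components of $\pi^{-1}(W)$, that is, from the $\pi$-exceptional prime divisors $E$ with $\pi(E)\subseteq W$, and for each of these the problem becomes valuation-theoretic: one must bound the order of $\sigma$ along $E$, working in the discrete valuation ring $\sO_{\wtilde Z,E}$. Two cases occur. If $E\subseteq\wtilde\Delta_{\lc}$ a logarithmic pole along $E$ is allowed, and it is enough to show that $\sigma$ acquires no worse than a logarithmic pole. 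If $E\not\subseteq\wtilde\Delta_{\lc}$ then, by the very definition of $\wtilde\Delta_{\lc}$, the center $\pi(E)$ meets the klt locus of $(Z,\Delta)$ away from $\Delta$, so the discrepancy satisfies $a(E)>-1$, and one must show that $\sigma$ extends with no pole whatsoever.

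For $p=n$ this follows directly from the discrepancy inequalities. A logarithmic $n$-form is a rational section of $\sO_{\wtilde Z}(K_{\wtilde Z}+\wtilde\Delta_{\lc})$; the coefficient of an exceptional $E$ in $K_{\wtilde Z}+\wtilde\Delta_{\lc}-\pi^*(K_Z+\Delta)$ equals $a(E)+1\ge 0$ when $E\subseteq\wtilde\Delta_{\lc}$ and $a(E)>-1$ otherwise, so $\pi_*\sO_{\wtilde Z}(K_{\wtilde Z}+\wtilde\Delta_{\lc})\cong\sO_Z(K_Z+\Delta)$ is reflexive --- the case that is ``almost by definition''. The degrees $p=1$ and $p=n-1$ are the genuine content, since there the order of $\sigma$ along $E$ is no longer governed by the single number $a(E)$. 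I would treat both by induction on $\dim Z$, cutting down by a general hyperplane section $H$: general hyperplane sections of log canonical pairs are again log canonical of one lower dimension, and a logarithmic $p$-form restricts to a logarithmic $p$-form on $H$ through the conormal sequence. The two admissible degrees are exactly those that reach an extreme degree under this operation: an $(n-1)$-form on $Z$ restricts to a top-degree form on $H$, reducing $p=n-1$ to the already-settled top-degree case, while a $1$-form restricts to a $1$-form, so the induction descends to the base case of surfaces. This is precisely why the method covers $\{1,n-1,n\}$ but not the intermediate degrees, which are in fact genuinely obstructed; the surface case would be handled directly from the classification of two-dimensional log canonical singularities and their resolutions.

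I expect the main obstacle to be making this induction close, in two respects. First, the surface base case requires an explicit analysis of the extension of logarithmic $1$-forms over resolutions of two-dimensional log canonical singularities. Second, and more seriously, one must control the compatibility of the extension problem with hyperplane restriction: a pole of $\sigma$ along an exceptional divisor $E$ must be detected after restricting to a general $H$ through the relevant center, and conversely extension on general $H$ must force extension on $\wtilde Z$. The delicate point here is the behaviour of the non-klt locus under cutting down --- it is exactly where the discrepancies equal $-1$ and the pair is only log canonical rather than klt --- and correctly tracking the logarithmic poles along $\wtilde\Delta_{\lc}$ through the induction is where I anticipate the technical heart of the argument to lie.
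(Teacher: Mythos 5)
Your reduction of reflexivity to an extension problem across exceptional prime divisors, and your treatment of $p=n$ by discrepancies, match the paper. The genuine gap is that your induction by general hyperplane sections cannot close: it says nothing about exceptional divisors $E$ whose image $\pi(E)$ is a \emph{point}. A general hyperplane $H$ misses $\pi(E)$ entirely, so cutting does not interact with the extension problem across $E$; and you cannot instead cut with a hyperplane \emph{through} $\pi(E)$, because hyperplane sections through a fixed point of the non-klt locus are in general no longer log canonical (the cone over an abelian surface is lc, but its hyperplane section through the vertex is a cone over a curve of genus at least $2$, which is not). Consequently your claim that ``the induction descends to the base case of surfaces'' fails: in every dimension $n \geq 3$ there remain divisors lying over points that the induction never reaches. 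This is exactly where the paper inserts a separate, Hodge-theoretic anchor (Proposition~\ref{prop:isolatedextension}): for $1$-forms and divisors over isolated points it combines Steenbrink's vanishing $R^{n-1}\pi_*\bigl(\sJ_{\wtilde \Delta_{\lc}'} \otimes \Omega^{n-1}_{\wtilde Z}(\log \wtilde \Delta_{\lc}')\bigr)=0$, Hartshorne's formal duality to convert this into vanishing of cohomology with supports $H^1_{F_z}$, and Namikawa's extension result for rational singularities. The paper's induction is then lexicographic on the pair $\bigl(\dim Z, \codim \pi(E')\bigr)$, with Proposition~\ref{prop:isolatedextension} handling the case $\codim \pi(E') = \dim Z$ at \emph{every} dimension, not only for surfaces. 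Your proposal contains no substitute for this input, and it is the actual heart of the $p=1$ case.

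The case $p=n-1$ suffers the same defect in your scheme---an $(n-1)$-form cannot be tested on hyperplane sections against a divisor contracting to a point, and no Steenbrink-type vanishing is available in that degree---which is presumably why the paper does not treat $p=n-1$ by induction at all. Instead (Proposition~\ref{prop:n-1forms}) it dualizes: after reducing to the case $\sO_Z(K_Z+\Delta) \cong \sO_Z$ locally via index-one covers, an $(n-1)$-form corresponds to a logarithmic vector field; vector fields integrate to local $\C$-actions, which lift to a \emph{functorial} resolution (Proposition~\ref{prop:localactionslift}); and the lifted field pairs back to a regular logarithmic $(n-1)$-form after a discrepancy computation. Finally, the pole-detection problem you flag---a pole of $\sigma$ along $E$ may die under the conormal projection to $\Omega^1_{\wtilde H}$---is real, and the paper resolves it by showing that a surviving pole would produce a nonzero section of $\sO_{F_{z,j}}(\sum k_i E_i|_{F_{z,j}})$, contradicting Mumford's negativity of exceptional intersection matrices (Proposition~\ref{prop:KMMimproved}); since you identified this as the anticipated difficulty, I count it as acknowledged incompleteness rather than a hidden error. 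The missing anchor at point-centers, by contrast, is a structural gap that the hyperplane method cannot repair.
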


\begin{subrem}
  Logarithmic differentials are introduced and discussed in
  \cite[Chapt.~11c]{Iitaka82} or \cite[Chap.~3]{Deligne70}. The notion of log
  resolution is recalled in Definition~\ref{def:everythinglog2b} below. We
  refer the reader to \cite[Sect.\ 2.3]{KM98} for the definition of log
  canonical and klt singularities.
\end{subrem}
\begin{subrem}
  Since the coefficients of its components are equal to $1$
  (cf.~Definition~\ref{def:everythinglog1}), the boundary divisor $\Delta$ is
  contained in the non-klt locus of $(X, \Delta)$. We have nevertheless chosen
  to explicitly include it in the definition of $\wtilde \Delta_{\lc}$ for
  reasons of clarity.
\end{subrem}

The name ``extension theorem'' is justified by the following remark.

\begin{rem}\label{rem:reflexiveextension}
  Theorem~\ref{thm:extension-lc} asserts precisely that for any open
  set $U \subset Z$ and any number $p \in \{n, n-1, 1\}$, the
  restriction morphism
  \begin{equation}\label{eq:rest}
    H^0\bigl(\pi^{-1}(U),\, \Omega^p_{\wtilde Z}(\log \wtilde \Delta_{\lc})\bigr) \to
    H^0\bigl(\pi^{-1}(U) \setminus \Exc(\pi),\, \Omega^p_{\wtilde Z}(\log \wtilde
    \Delta_{\lc})\bigr) 
  \end{equation}
  is surjective, where $\Exc(\pi) \subset \wtilde Z$ denotes the
  $\pi$-exceptional set,
\end{rem}

\begin{rem}
  After this paper appeared in preprint form we learned that more general results had
  been claimed in \cite[Thms.~4.9 and 4.11]{MR1971155}. However, in discussions with
  A.~Langer
  we found that the proof of \cite[Thm.~4.9]{MR1971155}
  contains a gap that at present has still not been filled: In the last paragraph of
  the proof, it is not clear that the prerequisites of \cite[Lem.~4.8]{MR1971155} are
  satisfied. 
  For a special case of the statement for surfaces, see \cite[Thm.~4.2]{MR1853454}.
\end{rem}

For an application of Theorem~\ref{thm:extension-lc}, recall the well-known
Bogomolov-Sommese vanishing theorem for snc pairs, cf.~\cite[Cor.~6.9]{EV92}: If $Z$
is a smooth projective variety, $\Delta \subset Z$ a divisor with simple normal
crossings and $\sA \subset \Omega^p_Z(\log \Delta)$ an invertible subsheaf, then the
Kodaira-Iitaka dimension of $\sA$ is not larger than $p$, i.e., $\kappa(\sA) \leq p$.
As a corollary to Theorem~\ref{thm:extension-lc}, we will show in
Section~\ref{sec:bogomolov} that a similar result holds for threefold pairs with log
canonical singularities. We refer to Definition~\ref{def:kappa1} for the definition
of the Kodaira-Iitaka dimension for sheaves that are not necessarily locally free.

\begin{thm}[Bogomolov-Sommese vanishing for log canonical threefolds and
  surfaces]\label{thm:Bvanishing} 
  Let $Z$ be a normal variety of dimension $\dim Z \leq 3$ and let $\Delta \subset Z$
  be a reduced divisor such that the pair $(Z, \Delta)$ is log canonical. Let $\sA
  \subset \Omega^{[p]}_Z(\log \Delta)$ be a reflexive subsheaf of rank one. If $\sA$
  is $\Q$-Cartier, then $\kappa(\sA) \leq p$.
\end{thm}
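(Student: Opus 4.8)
The plan is to reduce the statement to the classical Bogomolov--Sommese vanishing theorem for snc pairs, \cite[Cor.~6.9]{EV92}, by pulling $\sA$ back to a log resolution. First I would dispose of the trivial cases: if $p=0$ then $\sA \subseteq \sO_Z$ forces $\kappa(\sA)\le 0$, and if $\kappa(\sA) = -\infty$ there is nothing to prove, so I may assume $1 \le p$ and $\kappa(\sA) \ge 0$. The hypothesis $\dim Z \le 3$ enters here decisively: for $n := \dim Z \le 3$ every integer $p$ with $1 \le p \le n$ lies in $\{1,\,n-1,\,n\}$, which is precisely the range in which the Extension Theorem~\ref{thm:extension-lc} is available. (For $n \ge 4$ the value $p=2$ would fall outside this range, which is why the result is restricted to threefolds and surfaces.)

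Next I would fix a log resolution $\pi \colon \wtilde Z \to Z$ of $(Z,\Delta)$ and let $Z^\circ \subseteq Z$ be the big open subset over which $\pi$ is an isomorphism and $(Z,\Delta)$ is snc. Since $\sA$ is $\Q$-Cartier, choose $m$ with $\sA^{[m]}$ invertible, write $\sA = \sO_Z(D)$ for a Weil divisor $D$, and form the well-defined $\Q$-divisor $D_{\wtilde Z} := \frac{1}{m}\,\pi^*(mD)$; it agrees with $D$ over $Z^\circ$, and $\kappa(\wtilde Z, D_{\wtilde Z}) = \kappa(\sA)$ by the projection formula, because the relevant section spaces match along multiples of $m$, where the pullback is genuinely Cartier. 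The goal is to produce an invertible subsheaf $\wtilde\sA \hookrightarrow \Omega^p_{\wtilde Z}(\log \wtilde\Delta_{\lc})$ with $\kappa(\wtilde\sA) \ge \kappa(\sA)$; granting this, \cite[Cor.~6.9]{EV92} applied to the snc pair $(\wtilde Z,\wtilde\Delta_{\lc})$ gives $\kappa(\wtilde\sA) \le p$, whence $\kappa(\sA) \le \kappa(\wtilde\sA) \le p$ and the proof is complete.

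For the construction I would take $\wtilde\sA := \sO_{\wtilde Z}(\lceil D_{\wtilde Z}\rceil)$. Rounding up guarantees $\lceil D_{\wtilde Z}\rceil \ge D_{\wtilde Z}$, so $\kappa(\wtilde\sA) \ge \kappa(\wtilde Z, D_{\wtilde Z}) = \kappa(\sA)$ for free, and over $Z^\circ$ one has $\wtilde\sA \cong \sA$ compatibly with the given inclusion $\sA \subseteq \Omega^{[p]}_Z(\log\Delta)$. The crux is to show that this rounded-up pullback embeds into $\Omega^p_{\wtilde Z}(\log \wtilde\Delta_{\lc})$. Here the Extension Theorem is the essential input: it asserts that $\pi_* \Omega^p_{\wtilde Z}(\log \wtilde\Delta_{\lc})$ is reflexive, hence equal to $\Omega^{[p]}_Z(\log\Delta)$, and by adjunction this produces the natural pull-back of reflexive forms $\pi^*\Omega^{[p]}_Z(\log\Delta) \to \Omega^p_{\wtilde Z}(\log\wtilde\Delta_{\lc})$. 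Restricting this morphism to $\sA$ and twisting, the desired inclusion $\wtilde\sA \hookrightarrow \Omega^p_{\wtilde Z}(\log\wtilde\Delta_{\lc})$ amounts to the statement that the reflexive $p$-form defining $\sA \subseteq \Omega^{[p]}_Z(\log\Delta)$ pulls back with at worst the pole orders permitted after rounding.

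I expect this last point to be the main obstacle, and it is exactly where log canonicity is used: the coefficients of $D_{\wtilde Z}$ along the $\pi$-exceptional divisors are governed by discrepancies, and the condition that $(Z,\Delta)$ be log canonical bounds them below by $-1$. This is precisely the slack absorbed by the ceiling $\lceil \cdot \rceil$ together with the logarithmic pole allowed along $\wtilde\Delta_{\lc}$, so that the rounded-up pullback lands inside the sheaf of logarithmic differentials rather than acquiring a genuine, non-logarithmic pole along some $E_i \subseteq \Exc(\pi)$. Making this compatibility precise---matching the rounding of $D_{\wtilde Z}$ against the logarithmic poles supplied by the Extension Theorem, uniformly along every exceptional divisor---is the technical heart of the argument; once it is in place, the reduction to \cite[Cor.~6.9]{EV92} finishes the proof.
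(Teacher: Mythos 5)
Your overall architecture coincides with the paper's: pull back to a log resolution, produce an invertible subsheaf of the sheaf of snc log differentials whose Kodaira--Iitaka dimension is at least $\kappa(\sA)$, and quote \cite[Cor.~6.9]{EV92}; and your divisor-level bookkeeping is sound (with $D_{\wtilde Z} = \tfrac{1}{m}\pi^*(mD)$ one indeed has $\kappa(\wtilde Z, D_{\wtilde Z}) = \kappa(\sA)$, and rounding up only increases Kodaira dimension, which is a clean way to avoid tracking sections of all reflexive powers separately). However, the step you defer --- the existence of an inclusion $\sO_{\wtilde Z}(\lceil D_{\wtilde Z}\rceil) \into \Omega^p_{\wtilde Z}(\log \wtilde\Delta_{\lc})$ extending the given one over the locus where $\pi$ is an isomorphism --- is a genuine gap, not a routine verification, and it is precisely the content of the paper's Theorem~\ref{thm:VZsheafextension} (note the paper's Warning there: reflexive tensor operations do not commute with pull-back, which is exactly the phenomenon your rounding has to beat).

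Moreover, the mechanism you suggest for closing the gap is the wrong one. The exceptional coefficients of $D_{\wtilde Z}$ are \emph{not} governed by the discrepancies of $(Z,\Delta)$: discrepancies compare $\pi^*(K_Z+\Delta)$ with $K_{\wtilde Z}+\wtilde\Delta$, and bear no a priori relation to $\tfrac 1m\pi^*(mD)$ for an arbitrary $\Q$-Cartier rank-one subsheaf $\sA = \sO_Z(D) \subset \Omega^{[p]}_Z(\log\Delta)$ --- the two agree only in the extremal case $p=n$, $\sA = \sO_Z(K_Z+\Delta)$. Nor does Theorem~\ref{thm:extension-lc} alone suffice: applying it to local sections $g$ of $\sA$ (i.e., to the regular forms $g\sigma$, where $\sigma$ is the rational form cutting out $\sA$) only bounds the poles of $\pi^*\sigma$ by $\sup_g \pi^*(-\mathrm{div}(g))$, and since $-\mathrm{div}(g) \le D$ with equality impossible when $D$ is not principal, this supremum can be strictly smaller than $D_{\wtilde Z}$. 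For instance, on the quadric cone with $D$ a ruling through the vertex and $m=2$, every such $g$ yields exceptional coefficient $0$, while $D_{\wtilde Z}$ has coefficient $\tfrac 12$ and $\lceil D_{\wtilde Z}\rceil$ coefficient $1$. What actually closes this gap in the paper is the finite covering trick: pass to the local index-one cover $\gamma: (X,D) \to (Z,\Delta)$ associated with $\sA$, on which $\gamma^{[*]}\sA$ becomes \emph{invertible}, so that reflexive powers do commute with pull-back (Lemma~\ref{lem:l1}); use \cite[5.20]{KM98} to see that $(X,D)$ is again log canonical --- this is where log canonicity genuinely enters --- so that Theorem~\ref{thm:extension-lc} applies on the cover; and then descend the resulting regularity via Corollary~\ref{cor:pb2}. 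Without this covering argument (or an equivalent substitute), your proposal does not constitute a proof.
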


In fact, a stronger result holds---see Theorem~\ref{thm:Bvanishing2}
on page~\pageref{thm:Bvanishing2}.

\subsection{Outline of the paper}

We introduce notation and recall standard facts in Section~\ref{sec:notation}. In
Section~\ref{sec:ext-ext} we prepare for the proof of Theorem~\ref{thm:extension-lc}
by showing how extension properties of a given space $Z$ can often be deduced from
extension properties of finite covers of $Z$. This already gives extension results
for an important class of surface singularities that appears naturally within the
minimal model program.  Because of their importance in applications, we briefly
discuss these singularities in Subsection~\ref{ssec:dlc}.

Theorem~\ref{thm:extension-lc} is shown in Sections~\ref{sec:ext-n}--\ref{sec:ext-1}
for $n$-forms, $(n-1)$-forms and $1$-forms, respectively. The proof of the extension
result for $(n-1)$-forms relies on universal properties of the functorial resolution
of singularities and on liftings of local group actions.  The extension for $1$-forms
is shown using results of Steenbrink and Namikawa that are Hodge-theoretic in nature.

Section~\ref{sec:bogomolov} discusses pull-back properties of sheaves of
differentials and gives a proof of the Bogomolov-Sommese vanishing theorem for log
canonical threefolds and surfaces, Theorem~\ref{thm:Bvanishing}. For the reader's
convenience, an appendix recalling the variant of Hartshorne's formal duality theorem
for cohomology with supports that is required in our context is included,
cf.~Section~\ref{ssec:extoverdiscr}.

\subsection{Acknowledgments}

We would like to thank Thomas Peternell, Duco van~Straten and Chengyang Xu for
numerous discussions that motivated the problem and helped to improve this paper. We
would also like to thank Joseph Steenbrink for kindly answering our questions by
e-mail.

\part{TOOLS}

\section{Notation and standard facts}
\label{sec:notation}

\subsection{Reflexive tensor operations}

When dealing with sheaves that are not necessarily locally free, we
frequently use square brackets to indicate taking the reflexive hull.

\begin{notation}
  Let $Z$ be a normal variety and $\sA$ a coherent sheaf of
  $\O_Z$-modules. Let $n\in \bN$ and set $\sA^{[n]} := \otimes^{[n]}
  \sA := (\sA^{\otimes n})^{**}$, $\Sym^{[n]} \sA := (\Sym^n
  \sA)^{**}$, etc. Likewise, for a morphism $\gamma : X \to Z$ of
  normal varieties, set $\gamma^{[*]}\sA := (\gamma^*\sA)^{**}$. If
  $\sA$ is reflexive of rank one, we say that $\sA$ is $\Q$-Cartier if
  there exists an $n\in\bN$ such that $\sA^{[n]}$ is invertible.
\end{notation}

In the sequel, we will frequently state and prove results that hold
for the sheaf of differentials $\Omega^{[1]}_Z$, the reflexive hull of
its symmetric products, exterior products, tensor products, or any
combination of these tensor operations. The following shorthand
notation is therefore useful.

\begin{notation}
  A reflexive tensor operation is any combination of the reflexive
  tensor product $\otimes^{[k]}$, the symmetric product $\Sym^{[l]}$
  or the exterior product $\bigwedge^{[m]}$. If $\Top$ is a tensor
  operation, such as $\Top = \otimes^{[2]}\Sym^{[3]}$, and $\sF$ is a
  sheaf of $\sO_Z$-modules on a scheme $Z$, we often write $\Top \sF$
  instead of $\otimes^{[2]}_{\sO_Z} \Sym^{[3]}_{\sO_Z} \sF$.
\end{notation}

We will be working with the Kodaira-Iitaka dimension of reflexive
sheaves on normal spaces. Since this is perhaps not quite standard, we
recall the definition here.

\begin{defn}[Kodaira-Iitaka dimension]\label{def:kappa1}
  Let $Z$ be a normal projective variety and $\sA$ a reflexive sheaf
  of rank one on $Z$.  If $h^0\bigl(Z,\, \sA^{[n]}\bigr) = 0$ for all
  $n \in \mathbb N$, then we say that $\sA$ has Kodaira-Iitaka
  dimension $\kappa(\sA) := -\infty$.  Otherwise, set $$M := \bigl\{
  n\in \mathbb N \,|\, h^0\bigl(Z,\, \sA^{[n]}\bigr)>0\bigr\}.$$
  Recall that the restriction of $\sA$ to the smooth locus of $Z$ is
  locally free and consider the rational mapping
  $$
  \phi_n : Z \dasharrow \mathbb P\bigl(H^0\bigl(Z,\,
  \sA^{[n]}\bigr)^*\bigr) \text{ for each } n \in M.
  $$
  The Kodaira-Iitaka dimension of $\sA$ is then defined as
  $$
  \kappa(\sA) := \max_{n \in M} \bigl(\dim
  \overline{\phi_n(Z)}\bigr).
  $$
\end{defn}

\subsection{Logarithmic pairs and the extension theorem}

For the reader's convenience, we recall a few definitions of
logarithmic geometry. Although not quite standard, the following
notion of a morphism of logarithmic pairs is useful for our purposes.

\begin{defn}[Logarithmic pair]\label{def:everythinglog1}
  A \emph{logarithmic pair} $(Z,\Delta)$ consists of a normal variety
  or complex space $Z$ and a reduced, but not necessarily irreducible
  Weil divisor $\Delta \subset Z$.  A \emph{morphism of logarithmic
    pairs} $\gamma: (\wtilde Z, \wtilde \Delta) \to (Z,\Delta)$ is a
  morphism $\gamma: \wtilde Z \to Z$ such that $\gamma^{-1}(\Delta) =
  \wtilde \Delta$ set-theoretically.
\end{defn}

\begin{defn}[Snc pairs]\label{def:everythinglog2a}
  Let $(Z, \Delta)$ be a logarithmic pair, and $z \in Z$ a point. We say that $(Z,
  \Delta)$ is \emph{snc at $z$}, if there exists a Zariski-open neighborhood $U$ of
  $z$ such that $U$ is smooth and $\Delta \cap U$ has only simple normal crossings.
  The pair $(Z, \Delta)$ is \emph{snc} if it is snc at all $z\in Z$.

  Given a logarithmic pair $(Z,\Delta)$, let $(Z,\Delta)_{\reg}$ be the maximal open
  set of $Z$ where $(Z,\Delta)$ is snc, and let $(Z,\Delta)_{\sing}$ be its
  complement, with the induced reduced subscheme structure.
\end{defn}

\begin{subrem} If a logarithmic pair $(Z, \Delta)$ is snc at a point $z$,
    this implies that all components of $\Delta$ are smooth at $z$. Without the
    condition that $U$ is Zariski-open this would no longer be true, and
    Definition~\ref{def:everythinglog2a} would define normal crossing pairs rather
    than pairs with simple normal crossing.
\end{subrem}

\begin{defn}[Log resolution]\label{def:everythinglog2b}
  A \emph{log resolution} of $(Z, \Delta)$ is a birational morphism of pairs $\pi:
  (\wtilde Z, \wtilde \Delta) \to (Z,\Delta)$ such that the $\pi$-exceptional set
  $\Exc(\pi)$ is of pure codimension one, such that $\bigl(\wtilde Z,\, \supp(\wtilde
  \Delta\cup \Exc(\pi)) \bigr)$ is snc, and such that $\pi$ is isomorphic over $(Z,
  \Delta)_{\reg}$.
\end{defn}

The following definitions will be helpful in the proof of
Theorem~\ref{thm:extension-lc} and its corollaries.

\begin{notation}
  If $(Z, \Delta)$ is a logarithmic pair, and $\Top$ a reflexive
  tensor operation, the sheaf $\Top\Omega^1_Z(\log \Delta)$ will be
  called the sheaf of $\Top$-forms.
\end{notation}

\begin{defn}[Extension theorem]\label{def:extensionthmholds}
  If $(Z, \Delta)$ is a logarithmic pair, and $\Top$ a reflexive
  tensor operation, we say that \emph{the extension theorem holds for
    $\Top$-forms on $(Z, \Delta)$}, if the following holds: Let $\pi:
  (\wtilde Z, \wtilde \Delta) \to (Z, \Delta)$ be a log resolution and
  $E_\Delta$ the union of all $\pi$-exceptional components not
  contained in $\wtilde \Delta$. Then the push-forward sheaf
  $$
  \pi_* \Top \Omega^1_{\wtilde Z}(\log( \wtilde \Delta + E_\Delta))
  $$
  is reflexive. Equivalently, the restriction morphism
  \begin{equation}
    H^0 \bigl( \pi^{-1}(U),\, \Top \Omega^1_{\wtilde Z}(\log (\wtilde \Delta +
    E_\Delta)) \bigr) \to H^0 
    \bigl( \pi^{-1}(U) \setminus \Exc(\pi),\, \Top \Omega^1_{\wtilde Z}(\log \wtilde \Delta)
    \bigr)
  \end{equation}
  is surjective for any open set $U \subseteq Z$.
\end{defn}

\subsection{Pull-back properties of logarithmic and regular differentials}

Morphisms of snc pairs give rise to pull-back morphisms of logarithmic
differentials. In this section, we briefly recall the standard fact
that the pull-back morphism associated with a finite map is isomorphic
if the branch locus is contained in the boundary. We refer to
\cite[Chap.~11]{Iitaka82} for details.

\begin{fact}\label{fact:pullback}
  Let $\gamma: (\wtilde Z, \wtilde \Delta) \to (Z,\Delta)$ be a
  morphism of snc pairs, $U\subseteq Z$ an open set and
  $\wtilde U=\gamma^{-1}(U)$. Then there exists a natural pull-back
  map of forms $$\gamma^* :H^0\bigl(U,\, \Omega^1_Z(\log \Delta)\bigr)
  \to H^0\bigl(\wtilde U,\, \Omega^1_{\wtilde Z}(\log \wtilde
  \Delta)\bigr),$$ and an associated sheaf morphism
$$d\gamma:
  \gamma^*
\Omega^1_Z(\log \Delta)
 \to \Omega^1_{\wtilde Z}(\log
  \wtilde \Delta).$$ If $\gamma$ is finite and unramified over $Z
  \setminus \Delta$, then $d\gamma$ is an isomorphism.  \qed
\end{fact}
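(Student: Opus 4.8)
The plan is to construct $d\gamma$ from the ordinary pull-back of K\"ahler differentials and then verify that it lands in the logarithmic sheaf, deferring the isomorphism statement to a local computation along the boundary.

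First I would recall that for any morphism $\gamma$ there is a canonical pull-back $\gamma^*\Omega^1_Z \to \Omega^1_{\wtilde Z}$ of ordinary differentials. Over the dense open set $Z \setminus \Delta$, where no logarithmic poles occur, this already provides the desired map. To extend it over $\wtilde\Delta$, I would work in local snc coordinates $(x_1, \ldots, x_n)$ on $Z$ with $\Delta = \{x_1 \cdots x_k = 0\}$, so that $\Omega^1_Z(\log\Delta)$ is freely generated by $dx_1/x_1, \ldots, dx_k/x_k, dx_{k+1}, \ldots, dx_n$. The decisive computation is the identity $\gamma^*(dx_i/x_i) = d(x_i \circ \gamma)/(x_i \circ \gamma)$. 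Since $(Z,\Delta)$ and $(\wtilde Z, \wtilde\Delta)$ are snc and $\gamma^{-1}(\Delta) = \wtilde\Delta$ set-theoretically, the function $x_i \circ \gamma$ vanishes only along components of $\wtilde\Delta$; writing $x_i \circ \gamma = u \cdot \prod_j y_j^{a_{ij}}$ in local coordinates $(y_1, \ldots, y_n)$ adapted to $\wtilde\Delta$, with $u$ a local unit, one obtains $\gamma^*(dx_i/x_i) = du/u + \sum_j a_{ij}\, dy_j/y_j$, which is a local section of $\Omega^1_{\wtilde Z}(\log\wtilde\Delta)$. This shows the ordinary pull-back extends to a sheaf morphism $d\gamma: \gamma^*\Omega^1_Z(\log\Delta) \to \Omega^1_{\wtilde Z}(\log\wtilde\Delta)$; the extension is unique and independent of the coordinates chosen, because the target is locally free, hence torsion-free, so any two extensions agreeing on the dense open set $\wtilde Z \setminus \wtilde\Delta$ coincide. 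Taking sections of $d\gamma$ over $\wtilde U$ and precomposing with the natural map $H^0(U, \Omega^1_Z(\log\Delta)) \to H^0(\wtilde U, \gamma^*\Omega^1_Z(\log\Delta))$ yields $\gamma^*$.

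For the isomorphism statement I would note that, $\gamma$ being finite (and, since it is unramified onto a dense open subset of the irreducible $Z$, dominant, hence surjective), both $\gamma^*\Omega^1_Z(\log\Delta)$ and $\Omega^1_{\wtilde Z}(\log\wtilde\Delta)$ are locally free of the same rank $n = \dim Z = \dim \wtilde Z$. Hence $d\gamma$ is an isomorphism if and only if $\det(d\gamma)$ vanishes nowhere. Over $\wtilde Z \setminus \wtilde\Delta = \gamma^{-1}(Z \setminus \Delta)$ the morphism $\gamma$ is \'etale, so the ordinary pull-back---which here equals $d\gamma$---is already an isomorphism. It remains to check that $\det(d\gamma)$ is a unit along $\wtilde\Delta$. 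By purity of the branch locus the ramification is contained in $\Delta$, and in characteristic zero Abhyankar's lemma puts $\gamma$, \'etale-locally near a point of $\wtilde\Delta$, into Kummer normal form $x_i = y_i^{e_i}$ for $1 \le i \le l$ and $x_i = y_i$ for $i > l$. In these coordinates the computation above specializes to $\gamma^*(dx_i/x_i) = e_i\, dy_i/y_i$ and $\gamma^*(dx_i) = dy_i$, so the matrix of $d\gamma$ with respect to the standard logarithmic bases is diagonal with entries $e_1, \ldots, e_l, 1, \ldots, 1$. Its determinant $\prod_i e_i$ is a nonzero constant, hence a unit, and the isomorphism follows.

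The main obstacle is exactly this last point: that the logarithmic pole absorbs the ramification, so that $\det(d\gamma)$ is a unit rather than merely nonzero on a dense open set. This rests on two facts, namely the set-theoretic equality $\gamma^{-1}(\Delta) = \wtilde\Delta$, which guarantees that no spurious logarithmic poles or zeros are introduced and pins down precisely which generators occur, and the tameness available in characteristic zero, which makes the ramification indices $e_i$ invertible. Reducing the local picture to Kummer normal form via Abhyankar's lemma is the technical heart of the argument; alternatively, one could compare the residue sequences $0 \to \Omega^1_{\wtilde Z} \to \Omega^1_{\wtilde Z}(\log\wtilde\Delta) \to \bigoplus_j \O_{\wtilde\Delta_j} \to 0$ on source and target and run a snake-lemma argument, but the residue multiplicities that appear are governed by the same indices $e_i$, so the characteristic-zero hypothesis enters in the same way. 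We refer to \cite[Chap.~11]{Iitaka82} for the details.
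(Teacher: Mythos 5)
The paper never proves this statement: it is labeled a Fact, stamped with a \qed, and delegated to \cite[Chap.~11]{Iitaka82}, so there is no internal proof to compare against. Your argument is, in substance, the standard one that the citation points to---define $d\gamma$ by the local computation $\gamma^*(dx_i/x_i) = du/u + \sum_j a_{ij}\, dy_j/y_j$ (this is exactly where the set-theoretic condition $\gamma^{-1}(\Delta)=\wtilde\Delta$ in the paper's Definition~\ref{def:everythinglog1} enters), glue by torsion-freeness of the target, and prove the isomorphism statement by showing $\det(d\gamma)$ is nowhere zero. This is correct in outline and is the right proof.

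Two steps need repair, though. First, your parenthetical ``unramified over a dense open set, hence dominant'' is false for ``unramified'' in the scheme-theoretic sense: the inclusion of the $x$-axis $\wtilde Z=\{y=0\}$ into $Z=\bA^2$ with $\Delta=\{x=0\}$ and $\wtilde\Delta=\{(0,0)\}$ is a finite morphism of snc pairs, unramified everywhere, yet not dominant, and there $d\gamma$ maps a rank-$2$ bundle to a rank-$1$ bundle. So the Fact itself is only true if ``unramified over $Z\setminus\Delta$'' is read as ``\'etale (a covering space) over $Z\setminus\Delta$''; under that reading---the intended one, and the one satisfied by all covers appearing in the paper---finiteness plus openness of \'etale maps gives surjectivity and $\dim\wtilde Z=\dim Z$, and your rank count is justified. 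Second, Abhyankar's lemma alone does \emph{not} put $\gamma$ \'etale-locally into Kummer normal form: it only exhibits $\gamma$ locally as a quotient of a Kummer cover by a finite abelian group, and such quotients can be singular (the quotient of $y_i^2=x_i$, $i=1,2$, by $(w_1,w_2)\mapsto(-w_1,-w_2)$ has an $A_1$ point), in which case no Kummer form exists. What saves your reduction is a hypothesis you never invoke: $(\wtilde Z,\wtilde\Delta)$ is an snc pair, so $\wtilde Z$ is \emph{smooth}, and by Chevalley--Shephard--Todd the local group must then be generated by pseudo-reflections; for a subgroup of the diagonal torus this forces a product of cyclic groups acting coordinatewise, which is precisely the Kummer case. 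Either make that argument explicit, or bypass the structure theory entirely: $\det(d\gamma)$ is a map of line bundles, an isomorphism off $\wtilde\Delta$, so its zero locus is a divisor supported in $\wtilde\Delta$, and it suffices to check non-vanishing at the generic point of each component $\wtilde\Delta_j$; there only the one-variable identity $x_1\circ\gamma=u\,y_1^{e}$ and generic \'etaleness of $\wtilde\Delta_j\to\Delta_i$ (characteristic zero) are needed.
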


\begin{rem}
  If $\Top$ is any reflexive tensor operation, then the pull-back
  morphism also gives a pull-back of $\Top$-forms, $\gamma^* :
  H^0\bigl(Z,\,\Top \Omega^1_Z(\log \Delta)\bigr) \to H^0\bigl(\wtilde
  Z,\, \Top \Omega^1_{\wtilde Z}(\log \wtilde \Delta)\bigr)$, that
  obviously extends to a pull-back of rational $\Top$-forms.
\end{rem}

We state one immediate consequence for future reference. The following
notation is useful in the formulation.

\begin{notation}
  Let $X$ be a normal variety, $\Gamma \subset X$ a reduced Weil divisor and
  $\sF$ a reflexive coherent sheaf of $\sO_X$-modules. We will often consider
  sections of $\sF|_{X\setminus \Gamma}$. Equivalently, we consider rational
  sections of $\sF$ with poles of arbitrary order along $\Gamma$, and let
  $\sF(*\Gamma)$ be the associated sheaf of these sections on $X$.  More
  precisely, we define
  $$
  \sF(*\Gamma) := \lim_{\overset{\longrightarrow}{m}}\left(\bigl(\sF\otimes
      \sO_X(m\cdot \Gamma)\bigr)^{**} \right).
  $$
  With this notation we have $H^0 \bigl( X,\, \sF(*\Gamma) \bigr) =
  H^0\bigl( X\setminus \Gamma,\, \sF \bigr)$.
\end{notation}

\begin{cor}\label{cor:pb2}
  Under the conditions of Fact~\ref{fact:pullback}, let $\Top$ be any
  reflexive tensor operation and assume that $\gamma$ is a finite
  morphism. Let $\Gamma \subset Z$ be a reduced divisor and $\sigma \in
  H^0\bigl(Z,\, \Top \Omega^1_Z(\log \Delta)(*\Gamma)\bigr)$ a $\Top$-form
  that might have poles along $\Gamma$.
  \begin{enumerate}
  \item\label{il:pbcrit1} If $\gamma$ is unramified over $Z \setminus \Delta$,
    then the form $\sigma$ has only logarithmic poles along $\Gamma$ if and
    only if $\gamma^*(\sigma)$ has only logarithmic poles along $\supp
    \bigl(\gamma^{-1}(\Gamma) \bigr)$, i.e.,
    $$
    \sigma \in H^0\bigl(Z,\, \Top \Omega^1_Z(\log \Delta)\bigr) \quad
    \Leftrightarrow \quad \gamma^*(\sigma) \in H^0\bigl( \wtilde Z,\,
    \Top \Omega^1_{\wtilde Z}(\log \wtilde \Delta) \bigr).
    $$
  \item\label{il:pbcrit2} If\, $\Top = \bigwedge^{[p]}$, then $\sigma$
    is a regular form if and only if $\gamma^*(\sigma)$ is regular,
    i.e.,
    $$
    \sigma \in H^0\bigl(Z,\, \Omega^p_Z\bigr) \quad \Leftrightarrow
    \quad \gamma^*(\sigma) \in H^0\bigl( \wtilde Z,\,
    \Omega^p_{\wtilde Z} \bigr).
    $$
  \end{enumerate}
\end{cor}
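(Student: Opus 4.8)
The plan is to treat Corollary~\ref{cor:pb2} as a statement about the regularity of a rational section at the generic points of prime divisors. Since both pairs are snc, the varieties $Z$ and $\wtilde Z$ are smooth, so all four sheaves $\Top\Omega^1_Z(\log\Delta)$, $\Top\Omega^1_{\wtilde Z}(\log\wtilde\Delta)$, $\Omega^p_Z$ and $\Omega^p_{\wtilde Z}$ are locally free, hence reflexive. Consequently a rational section is a genuine section of such a sheaf if and only if it is regular at the generic point of every prime divisor, and global sections are the intersection of the stalks at all codimension-one points. As $\sigma$ is already regular away from $\Gamma$ in part~\eqref{il:pbcrit1}, and away from $\Delta\cup\Gamma$ in part~\eqref{il:pbcrit2}, it suffices to inspect finitely many codimension-one points: the generic points of the components of $\Gamma$, together with those of $\Delta$ in part~\eqref{il:pbcrit2}. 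The two implications ``$\Rightarrow$'' are immediate, being precisely the statement that the pull-back morphism of Fact~\ref{fact:pullback} (and the induced pull-back of $\Top$-forms), respectively the classical functoriality $\gamma^*\Omega^p_Z\to\Omega^p_{\wtilde Z}$ for regular forms, carry sections to sections; so the content lies in the converse implications.

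For part~\eqref{il:pbcrit1} I would use that $\gamma$ is unramified over $Z\setminus\Delta$, which is exactly the hypothesis making $d\gamma\colon\gamma^*\Omega^1_Z(\log\Delta)\to\Omega^1_{\wtilde Z}(\log\wtilde\Delta)$ an isomorphism by Fact~\ref{fact:pullback}. Applying the tensor operation $\Top$ produces an isomorphism $\gamma^*\Top\Omega^1_Z(\log\Delta)\xrightarrow{\ \sim\ }\Top\Omega^1_{\wtilde Z}(\log\wtilde\Delta)$ of locally free sheaves over all of $\wtilde Z$. Fix a component $\Gamma_i$ of $\Gamma$ with generic point $\eta_i$; since $\gamma$ is finite and dominant, hence surjective, there is a generic point $\wtilde\eta_i$ of $\gamma^{-1}(\Gamma_i)$ lying over $\eta_i$. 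The local rings $R:=\sO_{Z,\eta_i}$ and $\wtilde R:=\sO_{\wtilde Z,\wtilde\eta_i}$ are discrete valuation rings whose valuations satisfy $\tilde v|_{\bC(Z)}=e\cdot v$ for the ramification index $e\geq1$, so a rational function lies in $R$ precisely when its image lies in $\wtilde R$. Writing $\sF:=\Top\Omega^1_Z(\log\Delta)$, the stalk $\sF_{\eta_i}$ is free over $R$, and the isomorphism identifies $\sF_{\eta_i}\otimes_R\wtilde R$ with the stalk of $\Top\Omega^1_{\wtilde Z}(\log\wtilde\Delta)$ at $\wtilde\eta_i$, sending $\sigma\otimes 1$ to $\gamma^*\sigma$. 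Expanding $\sigma$ in an $R$-basis of $\sF_{\eta_i}$ and comparing valuations coefficientwise, the assumption that $\gamma^*\sigma$ is regular at $\wtilde\eta_i$ forces every coefficient into $R$, i.e.\ $\sigma\in\sF_{\eta_i}$. This single argument covers both the components $\Gamma_i\not\subset\Delta$, over which $\gamma$ is étale, and those contained in $\Delta$, over which $\gamma$ may ramify but the logarithmic isomorphism still holds.

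Part~\eqref{il:pbcrit2} cannot be handled this way: it assumes nothing about ramification and concerns the sheaf $\Omega^p_Z$ of genuinely regular forms, so the isomorphism $d\gamma$ is unavailable and one must additionally rule out logarithmic poles along $\Delta$. Here I would argue directly at the generic point $\eta$ of a component $D$ of $\Delta\cup\Gamma$. As $Z$ is smooth, choose local coordinates $t_1,\dots,t_n$ with $D=\{t_1=0\}$; at a generic point $\wtilde\eta$ of a component of $\gamma^{-1}(D)$ the morphism is modelled by $\gamma^*t_1=u\cdot s_1^{\,e}$ with $s_1$ a uniformizer, $u$ a unit and $e\geq1$. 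Writing $\sigma=dt_1\wedge\alpha+\beta$ with $\alpha$ and $\beta$ free of $dt_1$, a direct computation shows that a pole of order $a\geq1$ of $\sigma$ along $D$ yields a pole of order $e(a-1)+1\geq1$ of $\gamma^*\sigma$ along $\gamma^{-1}(D)$; hence $\gamma^*\sigma$ regular forces $\sigma$ regular at $\eta$. Running this over all components of $\Delta\cup\Gamma$ and re-invoking the codimension-one criterion gives $\sigma\in H^0(Z,\Omega^p_Z)$.

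The step I expect to be the main obstacle is the pole-order computation in part~\eqref{il:pbcrit2}, specifically ruling out that the various terms of $\gamma^*\sigma$ cancel and make a pole of $\sigma$ disappear after pull-back. This is where working in characteristic zero is essential: a finite dominant morphism of smooth varieties is then generically étale, so $\gamma^*\Omega^1_Z\to\Omega^1_{\wtilde Z}$ is generically an isomorphism and leading pole terms of $\sigma$ in distinct coordinate directions pull back to independent leading terms. Concretely, the $ds_1$-contribution of $dt_1\wedge\alpha$ and the contributions of $\beta$ occupy different basis positions and cannot annihilate one another, so the pole order is governed by the visible leading term and the factor $e\geq1$ guarantees that it survives. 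Once this non-cancellation is in place, both converse implications follow formally from the codimension-one reduction.
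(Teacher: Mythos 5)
Your treatment of part~(\ref{cor:pb2}.\ref{il:pbcrit1}) is correct and is exactly what the paper intends: the paper's entire proof of that part is that it ``follows immediately from Fact~\ref{fact:pullback}'' (via the isomorphism $d\gamma$ and a codimension-one valuation check, as you spell out), while the proof of part~(\ref{cor:pb2}.\ref{il:pbcrit2}) is explicitly left to the reader. The problem lies in your part~(\ref{cor:pb2}.\ref{il:pbcrit2}), at precisely the step you yourself flag as the main obstacle: the non-cancellation claim.

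The assertion that ``the $ds_1$-contribution of $dt_1\wedge\alpha$ and the contributions of $\beta$ occupy different basis positions and cannot annihilate one another'' is unjustified, and for a general coordinate system $s_1,\dots,s_n$ at the generic point of a component $\wtilde D$ of $\gamma^{-1}(D)$ it is false: the forms $\gamma^*(dt_i)=d(t_i\circ\gamma)$, $i\geq 2$, will in general have non-trivial $ds_1$-components. Concretely, $\gamma(s_1,s_2)=(s_1^2,\,s_1+s_2)$ is a finite morphism of snc pairs with $\Delta=\{t_1=0\}$ and $\wtilde\Delta=\{s_1=0\}$, and there $\gamma^*(dt_2)=ds_1+ds_2$, so $\gamma^*\beta$ does occupy the ``$ds_1$ positions'' and your argument as written does not exclude cancellation against $\gamma^*(dt_1)\wedge\gamma^*\alpha$. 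Generic \'etaleness of $\gamma$ cannot repair this, because the divisor $D$ lies inside the branch locus, exactly where the generic isomorphism $\gamma^*\Omega^1_Z\to\Omega^1_{\wtilde Z}$ degenerates; it yields only injectivity of $\gamma^*$ on rational forms, which controls vanishing but not pole orders. (That genuine input is needed here is shown by characteristic $p$: for the Frobenius $t=s^p$ one has $\gamma^*(t^{-1}dt)=0$, and the statement fails.) The missing idea is to adapt the upstairs coordinates to $\gamma$: take $s_i:=t_i\circ\gamma$ for $i\geq 2$. Together with a uniformizer $s_1$ of $\wtilde D$, these give a basis $ds_1,\,d(t_2\circ\gamma),\dots,d(t_n\circ\gamma)$ of $\Omega^1_{\wtilde Z}$ at the generic point of $\wtilde D$; by Nakayama and the conormal sequence this only requires that the restrictions $\bar t_2,\dots,\bar t_n$ form a separating transcendence basis of $k(\wtilde D)$ over $\bC$, which holds because $k(\wtilde D)/k(D)$ is finite and, in characteristic zero, separable. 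In this adapted basis your computation becomes correct: $\gamma^*\beta$ has no $ds_1$-component, the coefficient of $ds_1\wedge ds_J$ in $\gamma^*\sigma$ is a unit multiple of $s_1^{e-1}\gamma^*(g_J)$, where the $g_J$ are the coefficients of $\alpha$ (the unit being $eu+s_1u_0$, nonzero since $e\neq 0$ in the residue field), so regularity of $\gamma^*\sigma$ forces $e\cdot v_D(g_J)\geq -(e-1)$ and hence $v_D(g_J)\geq 0$; the coefficients of the remaining $ds_J$ then force regularity of the coefficients of $\beta$ as well. With this insertion your proof closes; without it, the decisive step is a gap.
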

\begin{proof}
  Assertion~(\ref{cor:pb2}.\ref{il:pbcrit1}) follows immediately from
  Fact~\ref{fact:pullback}. The proof
  of~(\ref{cor:pb2}.\ref{il:pbcrit2}) is left to the reader.
\end{proof}

\subsection{Comparing log resolutions}

Reflexivity of the push-forward of sheaves of differentials from an arbitrary
birational model of a given pair can often be concluded if we know the reflexivity of
the push-forward from a particular log resolution. This is summarized in the
following elementary lemma.

\begin{lem}\label{lem:oneresolutionsuffices}
  Let $(Z, \Delta)$ be a logarithmic pair and $W \subset Z$ a subvariety. For
  $i \in \{1,2\}$, let $\pi_i: (Z_i, \Delta_i) \to (Z, \Delta)$ be a
  birational morphism of logarithmic pairs and
  $$
  \Gamma_i := \text{largest reduced divisor contained in } \pi_i^{-1}(\Delta
  \cup W).
  $$
  If $\Top$ is a reflexive tensor operation, $(Z_2, \Gamma_2)$ is snc and
  $(\pi_2)_* \Top \Omega_{Z_2}^1 (\log \Gamma_2)$ is reflexive, then
  $(\pi_1)_* \Top \Omega_{Z_1}^1 (\log \Gamma_1)$ is reflexive as well.
\end{lem}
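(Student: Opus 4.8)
The plan is to exhibit both push-forward sheaves as subsheaves of the sheaf of rational $\Top$-forms on $Z$ that coincide on a large open set, and then to show that the reflexive one, $\sF_2$, computes the reflexive hull of the other. Write $\sF_i := (\pi_i)_* \Top\Omega^1_{Z_i}(\log\Gamma_i)$, where on the normal variety $Z_i$ the sheaf $\Top\Omega^1_{Z_i}(\log\Gamma_i)$ is understood in its reflexive sense, determined by its (locally free) restriction to the snc locus of $(Z_i,\Gamma_i)$; each $\sF_i$ is then torsion-free. First I would fix an open set $Z^\circ \subseteq Z$ whose complement has codimension at least two and over which $(Z,\Delta)$ is snc and both $\pi_1,\pi_2$ are isomorphisms. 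Such a $Z^\circ$ exists because the non-snc locus $(Z,\Delta)_{\sing}$ and the fundamental loci of the birational morphisms $\pi_i$ each have codimension at least two in the normal variety $Z$. Over $Z^\circ$ the divisors $\Gamma_1$ and $\Gamma_2$ both restrict to the divisorial part of $(\Delta\cup W)\cap Z^\circ$, so $\sF_1|_{Z^\circ} = \sF_2|_{Z^\circ} =: \sF^\circ$. Writing $j: Z^\circ \hookrightarrow Z$, the reflexive hull of either $\sF_i$ is $\sC := j_*\sF^\circ$; since $\sF_2$ is reflexive and agrees with $\sC$ at all codimension-one points, $\sF_2 = \sC$. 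As $\sF_1 \subseteq \sC$ holds automatically by torsion-freeness, it remains to prove the reverse inclusion $\sC \subseteq \sF_1$: every local section of $\sC$ over an open $U$ must lift to a section of $\Top\Omega^1_{Z_1}(\log\Gamma_1)$ over all of $\pi_1^{-1}(U)$.

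The key device is a common log resolution. I would choose a log resolution $\pi_3: (Z_3,\Delta_3) \to (Z,\Delta)$ that resolves $W$ as well and factors through both models, say $p_1: Z_3\to Z_1$ and $p_2: Z_3\to Z_2$ with $\pi_1\circ p_1 = \pi_2\circ p_2 = \pi_3$, and set $\Gamma_3$ to be the largest reduced divisor in $\pi_3^{-1}(\Delta\cup W)$, so that $(Z_3,\Gamma_3)$ is snc. Via the identification $\sF_2 = \sC$, a section $\sigma$ of $\sC$ over $U$ is an honest log $\Top$-form, $\sigma\in H^0\bigl(\pi_2^{-1}(U),\, \Top\Omega^1_{Z_2}(\log\Gamma_2)\bigr)$. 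I then transport $\sigma$ to $Z_1$ in two steps. Since the support of $p_2^{-1}(\Gamma_2)$ lies inside the snc divisor $\Gamma_3$, Fact~\ref{fact:pullback} produces $p_2^*\sigma \in H^0\bigl(\pi_3^{-1}(U),\, \Top\Omega^1_{Z_3}(\log\Gamma_3)\bigr)$. Pushing forward along $p_1$ then yields a section $(p_1)_* p_2^*\sigma$ of $(p_1)_*\Top\Omega^1_{Z_3}(\log\Gamma_3)$ over $\pi_1^{-1}(U)$.

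To finish I would show $(p_1)_*\Top\Omega^1_{Z_3}(\log\Gamma_3) \subseteq \Top\Omega^1_{Z_1}(\log\Gamma_1)$. Let $Z_1^\circ \subseteq Z_1$ be the snc locus of $(Z_1,\Gamma_1)$ intersected with the locus where $p_1$ is an isomorphism; its complement has codimension at least two in $Z_1$. Over $Z_1^\circ$ the morphism $p_1$ is an isomorphism carrying $\Gamma_3$ to $\Gamma_1$, so the two sheaves agree there; as the target is reflexive it equals the push-forward of its restriction to $Z_1^\circ$ under $Z_1^\circ\hookrightarrow Z_1$, and torsion-freeness of the source then forces the claimed inclusion. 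Hence $(p_1)_*p_2^*\sigma$ is a section of $\Top\Omega^1_{Z_1}(\log\Gamma_1)$ over $\pi_1^{-1}(U)$, that is, an element of $\sF_1(U)$. Finally, all of $\pi_1,\pi_2,\pi_3,p_1,p_2$ are isomorphisms over $Z^\circ$, so this section restricts to $\sigma$ on $U\cap Z^\circ$; by torsion-freeness it represents the same rational form as $\sigma$, giving $\sigma\in\sF_1(U)$ and thus $\sC\subseteq\sF_1$. Therefore $\sF_1 = \sC$ is reflexive.

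The main obstacle is the passage to the possibly singular, non-snc model $Z_1$: one must make sense of $\Top\Omega^1_{Z_1}(\log\Gamma_1)$ there as the reflexive sheaf determined by the snc locus, and verify that pushing a log $\Top$-form forward from the snc model $Z_3$ neither creates poles outside $\Gamma_1$ nor loses regularity across the codimension-two non-snc locus of $Z_1$. This is precisely what the comparison on $Z_1^\circ$, together with reflexivity of the target and Fact~\ref{fact:pullback}, is designed to control; the birational, hence codimension-one unramified, nature of $p_1$ and $p_2$ guarantees that log poles are matched along strict transforms, so no pole order is raised in either direction.
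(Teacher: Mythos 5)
Your proof is correct and takes essentially the same route as the paper's: both pass to a common resolution dominating $Z_1$ and $Z_2$, use reflexivity of $(\pi_2)_*\Top\Omega^1_{Z_2}(\log\Gamma_2)$ to realize a section as an honest log $\Top$-form on $Z_2$, pull it back to the common resolution via Fact~\ref{fact:pullback}, and descend to $Z_1$ by a comparison that only needs to be verified in codimension one. The sole difference is bookkeeping: the paper checks (log) pole orders divisor-by-divisor along strict transforms of the $\pi_1$-exceptional components, whereas you package the identical codimension-one comparison as the sheaf inclusion $(p_1)_*\Top\Omega^1_{Z_3}(\log\Gamma_3)\subseteq\Top\Omega^1_{Z_1}(\log\Gamma_1)$ deduced from reflexivity of the target.
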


\begin{subrem}
  In the setup of Lemma~\ref{lem:oneresolutionsuffices}, the sheaves
  $(\pi_1)_* \Top \Omega_{Z_1}^1 (\log \Gamma_1)$ and $(\pi_2)_* \Top
  \Omega_{Z_2}^1 (\log \Gamma_2)$ are isomorphic away from a set of
  codimension at least two. If the sheaves are reflexive, this implies that
  they are in fact isomorphic.
\end{subrem}

\begin{proof}[Proof of Lemma~\ref{lem:oneresolutionsuffices}]
  Choose an snc logarithmic pair $(\wtilde Z, \wtilde \Delta)$, together with
  birational morphisms of pairs $\varphi_i: (\wtilde Z, \wtilde \Delta) \to
  (Z_i, \Delta_i)$ such that $\wtilde \Gamma_2 := \supp \bigl( \varphi_2^{-1}
  (\Gamma_2) \bigr)$ is a divisor with snc support and such that the following
  diagram commutes:
  $$
  \begin{xymatrix}{
      (\wtilde Z, \wtilde \Delta) \ar^{\varphi_2}[r]\ar_{\varphi_1}[d]  &   (Z_2, \Delta_2) \ar^{\pi_2}[d] \\
      (Z_1, \Delta_1)\ar_{\pi_1}[r] & (Z, \Delta) & }
  \end{xymatrix}
  $$
  Let $U \subseteq Z$ be open and $\sigma \in H^0\bigl(U,\, \Top
  \Omega_Z^1 (\log \Delta)\bigr)$ a $\Top$-form on $U$. For
  convenience, set $\psi := \pi_1 \circ \varphi_1= \pi_2 \circ
  \varphi_2$ and denote the preimages of $U$ on $Z_1$, $Z_2$, and
  $\wtilde Z$ by $U_1$, $U_2$, and $\wtilde U$ respectively.

  By assumption, $\pi_2^* (\sigma)$ extends to a $\Top$-form on $(Z_2,
  \Gamma_2)$ without poles along the exceptional set $\Exc(\pi_2)$, i.e.,
  $\pi_2^* (\sigma) \in H^0 \bigl(U_2,\, \Top \Omega_{Z_2}^1 (\log \Gamma_2 )
  \bigr)$.  If we set
  $$
  \wtilde \Gamma := \text{largest reduced divisor contained in }
  \psi^{-1}(\Delta \cup W),
  $$
  then $\wtilde \Gamma$ contains $\wtilde \Gamma_2$ and
  Fact~\ref{fact:pullback} implies that $\psi^*(\sigma)$ extends to a
  $\Top$-form on $\bigl(\wtilde U, \wtilde \Gamma_2 \bigr)$. In particular,
  $$
  \psi^* (\sigma) \in H^0 \bigl(\wtilde U,\, \Top \Omega_{\wtilde Z}^1 (\log
  \wtilde \Gamma_2) \bigr) \subseteq H^0 \bigl(\wtilde U,\, \Top
  \Omega_{\wtilde Z}^1 (\log \wtilde \Gamma) \bigr).
  $$

  Now, if $\Gamma'_1 \subset \Exc(\pi_1)$ is any irreducible component with
  strict transform $\wtilde \Gamma'_1 \subset \wtilde Z$, it is clear that the
  $\Top$-form $\pi_1^*(\sigma)$ has (logarithmic) poles along $\Gamma'_1$ if
  and only if $\varphi_1^* \pi_1^*(\sigma) = \psi^*(\sigma)$ has (logarithmic)
  poles along $\wtilde \Gamma'_1$. The proof is then finished once we observe
  that $\Gamma'_1 \subseteq \pi_1^{-1}(\Delta \cup W)$ if and only if $\wtilde
  \Gamma'_1 \subseteq \psi^{-1}(\Delta \cup W)$.
\end{proof}

\section{Finite covering tricks and log canonical singularities}
\label{sec:ext-ext}

\subsection{The finite covering trick}

In order to prove the extension theorem for a given pair $(Z,
\Delta)$, it is often convenient to go to a cover of $Z$ and argue
there. For instance, if $(Z, \Delta)$ is log canonical one might want
to consider local index-one covers where singularities are generally
easier to describe.

\begin{prop}[Finite covering trick]\label{prop:finitecoveringtrick}
  Consider a commutative diagram of surjective morphisms of logarithmic
  pairs as follows,
  $$
  \xymatrix{ (\wtilde X, \wtilde D) \ar[rrr]^{\txt{\scriptsize $\wtilde
        \gamma$, finite}} \ar[d]_{\txt{\scriptsize $\wtilde \pi$\\\scriptsize
        contracts $E_X$}} &&& (\wtilde Z,\wtilde \Delta)
    \ar[d]^{\txt{\scriptsize$\pi$ \\\scriptsize
        log resolution,\\\scriptsize contracts  $E_Z$}} \\
    (X, D) \ar[rrr]_{\txt{\scriptsize $\gamma$, finite}} &&& (Z,\Delta) }
  $$
  where $\wtilde X$ is the normalization of the fiber product $\wtilde
  Z \times_Z {X}$. Let $\Top$ be a reflexive tensor operation, $\sigma
  \in H^0\bigl( Z,\, \Top \Omega^1_Z(\log \Delta) \bigr)$ a
  $\Top$-form, and $E_Z \subset \Exc(\pi) \subset \wtilde Z$ a
  $\pi$-exceptional divisor. Assume that either
  \begin{enumerate-p}
  \item\ilabel{il:extall} $E_Z$ is the union of all $\pi$-exceptional
    components not contained in $\wtilde \Delta$, or

  \item\ilabel{il:extlog} $\Top = \bigwedge^{[p]}$, and no component of $E_Z
    \subset \wtilde Z$ is contained in $\wtilde \Delta$.
  \end{enumerate-p}
  Then
  $$
  \wtilde \pi^* \gamma^{[*]}(\sigma) \in H^0\bigl( \wtilde X,\, \Top
  \Omega^1_{\wtilde X}(\log (\wtilde D + E_X)) \bigr) \, \Longrightarrow \,
  \pi^*(\sigma) \in H^0\bigl( \wtilde Z,\, \Top \Omega^1_{\wtilde Z}(\log
  (\wtilde \Delta+E_Z)) \bigr),
  $$
  where $E_X := \supp(\wtilde \gamma^{-1}(E_Z))$ is the reduced preimage of
  $E_Z$.
\end{prop}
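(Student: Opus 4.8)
The plan is to verify the claimed inclusion one prime divisor at a time, having first reduced it to a question in codimension one. Since $\bigl(\wtilde Z,\, \wtilde \Delta + E_Z\bigr)$ is snc---it is dominated by the snc divisor $\supp(\wtilde \Delta \cup \Exc(\pi))$---the sheaf $\sF := \Top \Omega^1_{\wtilde Z}(\log(\wtilde \Delta + E_Z))$ is reflexive. Consequently, to prove that the rational $\Top$-form $\pi^*(\sigma)$ is a global section of $\sF$ it suffices to check that it lies in $\sF$ at the generic point $\eta_F$ of every prime divisor $F \subset \wtilde Z$, membership over the remaining codimension-two locus being automatic. If $F$ is neither $\pi$-exceptional nor contained in $\wtilde \Delta$, then near $\eta_F$ the morphism $\pi$ is an isomorphism onto the generic point of a divisor of $Z$ not contained in $\Delta$, which lies in $(Z,\Delta)_{\reg}$; there $\pi^*(\sigma)$ is regular by Fact~\ref{fact:pullback}. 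Only the components of $\Exc(\pi)$ and of $\wtilde \Delta$ thus require attention.

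For such an $F$, I would argue locally at $\eta_F$. Choose a neighbourhood $U$ of $\eta_F$ that meets no other component of $\wtilde \Delta + E_Z$, so that $\wtilde Z$ is smooth and $(U,F)$ is snc. The crucial input is the identity of rational forms
$$
\wtilde \gamma^*\bigl(\pi^*(\sigma)\bigr) = \wtilde \pi^*\bigl(\gamma^{[*]}(\sigma)\bigr),
$$
which follows from the commutativity $\gamma \circ \wtilde \pi = \pi \circ \wtilde \gamma$ and from the fact that $\gamma^{[*]}$ agrees with the ordinary pull-back on the dense open set where $\sigma$ restricts to a genuine snc form. By hypothesis the right-hand side lies in $\Top \Omega^1_{\wtilde X}(\log(\wtilde D + E_X))$, hence has at worst logarithmic poles along $\supp \bigl(\wtilde \gamma^{-1}(F)\bigr)$. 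I would then descend this pole information through $\wtilde \gamma$ by means of Corollary~\ref{cor:pb2}, applied over $U$ with boundary $F$: at the generic points of $\wtilde \gamma^{-1}(F)$ the normal variety $\wtilde X$ is smooth, and by purity of the branch locus the only divisorial ramification of $\wtilde \gamma|_U$ occurs along $F$ itself, so $\wtilde \gamma|_U$ is a finite morphism of snc pairs, unramified over $U \setminus F$.

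The two cases of the statement are now distinguished only by which assertion of Corollary~\ref{cor:pb2} is invoked. If $F \subseteq \wtilde \Delta$ or $F$ is a component of $E_Z$, then $\sF$ permits logarithmic poles along $F$, and assertion~(\ref{cor:pb2}.\ref{il:pbcrit1}) converts the logarithmic-pole statement upstairs into the assertion that $\pi^*(\sigma)$ has at worst logarithmic poles along $F$. In the first case, where $E_Z$ is the union of \emph{all} $\pi$-exceptional components not contained in $\wtilde \Delta$, every exceptional divisor is of one of these two types, so this completes the proof for an arbitrary reflexive tensor operation $\Top$. In the second case, where $\Top = \bigwedge^{[p]}$ and $E_Z$ may be a proper subset, there remain the exceptional components $F$ lying in neither $\wtilde \Delta$ nor $E_Z$; here $\sF$ demands that $\pi^*(\sigma)$ be \emph{regular} along $F$. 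Since then $\wtilde \gamma^{-1}(F) \not\subseteq \wtilde D + E_X$, the form upstairs has no pole at all along $\supp\bigl(\wtilde\gamma^{-1}(F)\bigr)$, and assertion~(\ref{cor:pb2}.\ref{il:pbcrit2})---which descends regularity of $p$-forms with no hypothesis on ramification---yields regularity of $\pi^*(\sigma)$ along $F$.

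The step I expect to be most delicate is precisely the passage through the finite, possibly ramified map $\wtilde \gamma$ whose source $\wtilde X$ is merely normal, not smooth. The argument hinges on reducing, via reflexivity, to the generic points of divisors, where $\wtilde X$ becomes smooth and---by purity of the branch locus---$\wtilde \gamma$ ramifies at worst along the divisor under consideration; this is exactly the configuration in which Corollary~\ref{cor:pb2} applies. Checking that this localisation is legitimate, and that the snc hypotheses of Fact~\ref{fact:pullback} genuinely hold after shrinking $U$, is where the real work lies; the descent itself is then a direct appeal to Corollary~\ref{cor:pb2}.
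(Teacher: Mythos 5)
Your proposal is correct and follows essentially the same route as the paper's proof: reduce to checking $\pi^*(\sigma)$ at general (generic) points of divisorial components using local freeness of $\Top \Omega^1_{\wtilde Z}(\log(\wtilde \Delta + E_Z))$, localize so that $\wtilde \gamma$ is branched only along the component under consideration, and then invoke assertion~(\ref{cor:pb2}.\ref{il:pbcrit1}) of Corollary~\ref{cor:pb2} for components inside $\wtilde \Delta + E_Z$ and assertion~(\ref{cor:pb2}.\ref{il:pbcrit2}) for the remaining components in the case $\Top = \bigwedge^{[p]}$. The only difference is expository: you spell out the localization details (smoothness of $\wtilde X$ in codimension one, control of the branch locus, the identity $\wtilde\gamma^*\pi^*(\sigma) = \wtilde\pi^*\gamma^{[*]}(\sigma)$) that the paper compresses into the phrase ``over a suitably small neighborhood of $x$, the morphism $\wtilde\gamma$ is branched only along $E_Z'$, if at all.''
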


\begin{subexample}
  If $\Top$ is not of the form $\bigwedge^{[p]}$, the assumption made
  in~\iref{il:extall} is indeed necessary.  For an example in the simple case
  where $\Top = \Sym^{[2]}$ and $\Delta = \emptyset$, let $\wtilde Z$ be the
  total space of $\sO_{\bP^1}(-2)$, and $E_Z$ the zero-section. It is
  reasonably easy to write down a form
  $$
  \sigma \in H^0\bigl( \wtilde Z,\, \Sym^2 \Omega^1_{\wtilde Z}(\log
  E_Z)\bigr) \setminus H^0\bigl( \wtilde Z,\, \Sym^2 \Omega^1_{\wtilde Z}
  \bigr).
  $$
  Because $E_Z$ contracts to a quotient singularity that has a smooth 2:1
  cover, this example shows that the conclusion of
  Proposition~\ref{prop:finitecoveringtrick} holds only for differentials with
  logarithmic poles along $E_Z$, and that the boundary given there is indeed
  the smallest possible.

  \PreprintAndPublication{In order to give an explicit example for
    $\sigma$, consider the standard coordinate cover of $\wtilde Z$
    with open sets $U_1, U_2 \simeq \mathbb A^2$, where $U_i$ carries
    coordinates $x_i, y_i$ and coordinate change is given as
    $$
    \phi_{1,2} : (x_1, y_1) \mapsto (x_2, y_2) = ( x_1^{-1},\,
    x_1^2y_1).
    $$
    In these coordinates the bundle map $U_i \to \P^1$ is given as $(x_i, y_i)
    \to x_i$, and the zero-section $E_Z$ is given as $E_Z \cap U_i = \{y_i =
    0\}$. Now take
    $$
    \sigma_2 := y_2^{-1}(dy_2)^2 \in H^0 \bigl(U_2,\, \Sym^2(
    \Omega^1_{\wtilde Z}(\log E_Z))\bigr)
    $$
    and observe that $\phi_{1,2}^*(\sigma_2)$ extends to a form in
    $H^0\bigl(U_1,\, \Sym^2(\Omega^1_{\wtilde Z}(\log E_Z))\bigr)$.}{}
\end{subexample}

\begin{proof}[Proof of Proposition~\ref{prop:finitecoveringtrick}]
  Suppose that we are given a $\Top$-form $\sigma \in H^0\bigl( Z,\,
  \Top \Omega^1_Z(\log \Delta) \bigr)$ such that
  \begin{equation}\label{eq:ass123}
    \wtilde \pi^* \gamma^{[*]}(\sigma) \in H^0\bigl( \wtilde X,\, \Top
    \Omega^1_{\wtilde X}(\log (\wtilde D +E_X)) \bigr).
  \end{equation}
  We need to show that $\sigma$ extends to all of $\wtilde Z$ as a
  $\Top$-form, i.e., that
  \begin{equation}\label{eq:aim12}
    \pi^*(\sigma) \in H^0\bigl( \wtilde Z,\, \Top
    \Omega^1_{\wtilde Z}(\log (\wtilde \Delta+E_Z)) \bigr).
  \end{equation}
  Since~\eqref{eq:aim12} holds outside of $\Exc(\pi)$, and since $\Top
  \Omega^1_{\wtilde Z}\bigl(\log (\wtilde \Delta + E_Z)\bigr)$ is locally
  free, it suffices to show~\eqref{eq:aim12} near general points of components
  of $\Exc(\pi)$. Thus, let $E_Z' \subset \Exc(\pi)$ be an irreducible
  component and $x \in E_Z'$ a general point. Over a suitably small
  neighborhood of $x$, the morphism $\wtilde \gamma$ is branched only along
  $E_Z'$, if at all.

  We will apply Corollary~\ref{cor:pb2} for this small neighborhood of $x$.
  If $E_Z' \subseteq \wtilde \Delta+E_Z$, then \eqref{eq:aim12} follows
  from~\eqref{eq:ass123} by (\ref{cor:pb2}.\ref{il:pbcrit1}). This proves the
  statement in case~\iref{il:extall}. If $E_Z' \not \subseteq \wtilde
  \Delta+E_Z$, we are in case~\iref{il:extlog}, so $\Top =
  \bigwedge^{[p]}$. Then Inclusion~\eqref{eq:aim12} follows
  from~\eqref{eq:ass123} by (\ref{cor:pb2}.\ref{il:pbcrit2}). This proves the
  statement in case~\iref{il:extlog}.
\end{proof}

The following are two immediate consequences of
Proposition~\ref{prop:finitecoveringtrick}.

\begin{cor}\label{cor:c1}
  Let $(Z, \Delta)$ be a logarithmic pair, $\Top$ a reflexive tensor operation
  and assume that there exists a finite morphism of pairs $\gamma: (X, D) \to
  (Z, \Delta)$ such that the extension theorem holds for $\Top$-forms on $(X,
  D)$, in the sense of Definition~\ref{def:extensionthmholds}.  Then the
  extension theorem holds for $\Top$-forms $(Z, \Delta)$.
\end{cor}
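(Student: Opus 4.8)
The plan is to deduce the statement from the Finite Covering Trick, Proposition~\ref{prop:finitecoveringtrick}, once the extension property on $(X,D)$ has been transported down to the normalized fibre product. Following Definition~\ref{def:extensionthmholds}, I fix a log resolution $\pi:(\wtilde Z,\wtilde\Delta)\to(Z,\Delta)$ and let $E_Z$ be the union of the $\pi$-exceptional components not contained in $\wtilde\Delta$; the task is then to show that $\pi_*\Top\Omega^1_{\wtilde Z}(\log(\wtilde\Delta+E_Z))$ is reflexive, equivalently that any $\Top$-form $\sigma\in H^0(Z,\Top\Omega^1_Z(\log\Delta))$ pulls back under $\pi$ with at worst logarithmic poles along $\wtilde\Delta+E_Z$ (the argument being local over $Z$, I suppress the open set $U$). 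Forming the normalization $\wtilde X$ of $\wtilde Z\times_Z X$ produces the square of Proposition~\ref{prop:finitecoveringtrick} with $\wtilde\gamma:\wtilde X\to\wtilde Z$ finite, $\wtilde\pi:\wtilde X\to X$, and $E_X:=\supp(\wtilde\gamma^{-1}(E_Z))$. Since $E_Z$ is the union of all $\pi$-exceptional components not contained in $\wtilde\Delta$, the first alternative in Proposition~\ref{prop:finitecoveringtrick} applies, so it suffices to establish
\begin{equation}\label{eq:cor-c1-hyp}
  \wtilde\pi^*\gamma^{[*]}(\sigma)\in H^0\bigl(\wtilde X,\,\Top\Omega^1_{\wtilde X}(\log(\wtilde D+E_X))\bigr).
\end{equation}

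To prove~\eqref{eq:cor-c1-hyp} I descend the extension property of $(X,D)$ from a smooth model to the possibly singular space $\wtilde X$. Because $\gamma$ is a finite morphism of pairs, the reflexive pull-back $\gamma^{[*]}(\sigma)$ is again a $\Top$-form, $\gamma^{[*]}(\sigma)\in H^0(X,\Top\Omega^1_X(\log D))$ (cf.\ the remark following Fact~\ref{fact:pullback}). I then choose a log resolution $\rho:\hat X\to\wtilde X$ of $\wtilde X$ together with its boundary and its $\wtilde\pi$-exceptional divisors, arranged so that $\psi:=\wtilde\pi\circ\rho:\hat X\to X$ is a log resolution of $(X,D)$; Lemma~\ref{lem:oneresolutionsuffices} ensures that this freedom in the choice of resolution is harmless. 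Writing $\hat D:=\supp(\psi^{-1}(D))$ and letting $E_{\hat X}$ be the union of the $\psi$-exceptional components not contained in $\hat D$, the hypothesis that the extension theorem holds for $\Top$-forms on $(X,D)$, applied to $\gamma^{[*]}(\sigma)$ and to $\psi$, yields
\begin{equation}\label{eq:cor-c1-ext}
  \rho^*\bigl(\wtilde\pi^*\gamma^{[*]}(\sigma)\bigr)=\psi^*\gamma^{[*]}(\sigma)\in H^0\bigl(\hat X,\,\Top\Omega^1_{\hat X}(\log(\hat D+E_{\hat X}))\bigr).
\end{equation}

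It remains to push~\eqref{eq:cor-c1-ext} down along $\rho$. Since $\wtilde\gamma$ is finite with $\wtilde\gamma^{-1}(\wtilde\Delta)=\wtilde D$, every $\wtilde\pi$-exceptional prime divisor of $\wtilde X$ maps onto a $\pi$-exceptional prime divisor of $\wtilde Z$; those lying over $\wtilde\Delta$ are contained in $\wtilde D$, and the remaining ones are exactly the components of $E_X$. Consequently the non-$\rho$-exceptional components of $\hat D+E_{\hat X}$ are precisely the strict transforms of the components of $\wtilde D+E_X$. The pole-comparison principle used in the proof of Lemma~\ref{lem:oneresolutionsuffices}---that, by Fact~\ref{fact:pullback}, a rational $\Top$-form on the normal variety $\wtilde X$ has at worst logarithmic poles along a reduced divisor if and only if its $\rho$-pull-back has at worst logarithmic poles along the strict transform of that divisor together with $\Exc(\rho)$---then converts~\eqref{eq:cor-c1-ext} into~\eqref{eq:cor-c1-hyp}. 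Applying Proposition~\ref{prop:finitecoveringtrick} gives the desired extension of $\pi^*(\sigma)$, and since $\pi$ was an arbitrary log resolution, the extension theorem holds for $\Top$-forms on $(Z,\Delta)$.

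The essential point, and the main obstacle, is that Proposition~\ref{prop:finitecoveringtrick} requires the finiteness of $\wtilde\gamma$: one cannot simply run the argument on the smooth model $\hat X$, since $\hat X\to\wtilde Z$ contracts $\Exc(\rho)$ and is no longer finite. Hence the extension statement genuinely has to be brought down to the singular fibre product $\wtilde X$, and the delicate step is the descent above, which hinges on matching the exceptional and boundary divisors across $\hat X$, $\wtilde X$ and $\wtilde Z$ and on log differentials over the singular $\wtilde X$ being detected faithfully by their pull-backs to $\hat X$.
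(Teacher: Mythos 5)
Your proposal follows the paper's own architecture: both proofs reduce Corollary~\ref{cor:c1} to case~(1) of the finite covering trick, Proposition~\ref{prop:finitecoveringtrick}, applied over the normalized fibre product $\wtilde X$, and both obtain the required hypothesis on $\wtilde X$ by comparing $\wtilde\pi\colon\wtilde X\to X$ with a smooth model and using the extension property of $(X,D)$. The paper does this comparison in one stroke, by applying Lemma~\ref{lem:oneresolutionsuffices} with $\pi_1=\wtilde\pi$ and $\pi_2$ a log resolution of $(X,D)$; your detour through $\hat X$ essentially inlines the proof of that lemma.

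There is, however, one step that is false as written and needs repair. You claim that $\rho$ can be ``arranged so that $\psi=\wtilde\pi\circ\rho$ is a log resolution of $(X,D)$''. This is impossible in general: by Definition~\ref{def:everythinglog2b} a log resolution must be an isomorphism over $(X,D)_{\reg}$, whereas $\wtilde\pi$ already modifies $X$ over points of $(X,D)_{\reg}$ lying above singular points of $(Z,\Delta)$ --- this is precisely the situation of interest (e.g.\ a quotient singularity with a smooth finite cover), and no blow-up $\rho$ of $\wtilde X$ can undo it. Consequently Definition~\ref{def:extensionthmholds} does not apply to $\psi$ directly, and your extension statement on $\hat X$ is not yet justified. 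The cure is the lemma you cite, but applied for a sharper purpose than ``freedom in the choice of resolution'': take $\pi_1:=\psi$, take $\pi_2$ a genuine log resolution of $(X,D)$, and take $W:=\psi(\Exc(\psi))\cup(X,D)_{\sing}$, a closed subset of codimension at least two; then $\Gamma_1=\hat D+E_{\hat X}$, while $\Gamma_2$ is the full snc boundary of $\pi_2$, so the reflexivity hypothesis on $(\pi_2)_*$ is exactly the assumed extension theorem on $(X,D)$, and Lemma~\ref{lem:oneresolutionsuffices} yields your claim for $\psi$. With that fixed, your descent along $\rho$ (checking poles at general points of the components of $\wtilde D+E_X$, where $\rho$ is an isomorphism) is sound, and it supplies exactly the codimension-one information that the proof of Proposition~\ref{prop:finitecoveringtrick} actually uses. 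Note, finally, that once Lemma~\ref{lem:oneresolutionsuffices} is invoked in this form one may as well apply it to $\pi_1=\wtilde\pi$ itself, as the paper does; this renders the roof $\hat X$ and the descent step superfluous.
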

 \begin{proof}
   Let $\pi: (\wtilde Z, \wtilde \Delta) \to (Z, \Delta)$ be a
   log resolution and consider the snc divisor
   $$
   \Gamma_Z := \supp \big( \wtilde \Delta \cup \Exc(\pi) \bigr).
   $$
   Further let $U \subseteq Z$ be an open set and $$ \sigma \in H^0\bigl(
   U\setminus{(Z,\Delta)_{\sing}},\, \Top \Omega^1_Z(\log \Delta )\bigr) =H^0
   \bigl( \pi^{-1}(U) \setminus \Exc(\pi),\, \Top \Omega^1_{\wtilde Z}(\log
   \Gamma_Z) \bigr)
   $$
   a $\Top$-form defined away from the singularities. We need to show that its
   pull-back extends to a $\Top$-form on $\bigl( \pi^{-1}(U),\, \Gamma_Z
   \big)$, i.e.,
   \begin{equation}\label{eq:want}
     \pi^*(\sigma) \in H^0\bigl( \pi^{-1}(U),\, \Top
     \Omega^1_{\wtilde Z}(\log \Gamma_Z) \bigr).
   \end{equation}
   For convenience of notation, we shrink $Z$ and assume without loss
   of generality that $U=Z$. In order to prove~\eqref{eq:want},
   consider a commutative diagram of surjective morphisms of pairs,
   \begin{equation}\label{eq:diagcover}
     \xymatrix{
       (\wtilde X, \wtilde D) \ar[rrr]^{\txt{\scriptsize $\wtilde
           \gamma$, finite}} \ar[d]_{\wtilde \pi} &&& (\wtilde Z,\wtilde \Delta)
       \ar[d]^{\txt{\scriptsize$\pi$ \\\scriptsize
           log resolution}} \\
       (X, D) \ar[rrr]_{\txt{\scriptsize $\gamma$, finite}} &&& (Z,\Delta), }
   \end{equation}
   where $\wtilde X$ is the normalization of the fiber product.  Let
   $$
   \Gamma_X := \supp \bigl(\wtilde \gamma^{-1}(\Gamma_Z) \bigr) = \supp
   \bigl(\wtilde D \cup \Exc(\wtilde \pi) \bigr).
   $$
   Then it follows from Lemma~\ref{lem:oneresolutionsuffices} that $\wtilde
   \pi^* \gamma^*(\sigma)$ extends to a $\Top$-form on $\bigl( \wtilde X,\,
   \Gamma_X \bigr)$, i.e.,
   $$
   \wtilde \gamma^* \pi^*(\sigma) = \wtilde \pi^* \gamma^*(\sigma) \in
   H^0\bigl( \wtilde X,\, \Top \Omega^1_{\wtilde X}(\log \Gamma_X) \bigr).
   $$
   Since $\Exc(\pi) \subseteq \Gamma_Z$, \eqref{eq:want} follows from
   case~\iref{il:extall} of Proposition~\ref{prop:finitecoveringtrick} with
   $E_Z := \Exc(\pi) \setminus \wtilde \Delta$.
 \end{proof}

\begin{cor}\label{cor:c2}
  In order to prove the Extension Theorem~\ref{thm:extension-lc} in
  full generality, it suffices to show it under the additional
  assumption that $K_Z + \Delta$ is Cartier.
\end{cor}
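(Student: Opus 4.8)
The plan is to reduce the general log canonical case to the Cartier case by passing to a local index-one cover and then invoking Corollary~\ref{cor:c1}. By Remark~\ref{rem:reflexiveextension} and Definition~\ref{def:extensionthmholds}, the assertion of Theorem~\ref{thm:extension-lc} amounts to the reflexivity of a push-forward sheaf on $Z$, and reflexivity is a local property. I would therefore fix an arbitrary point $z \in Z$ and prove the extension theorem in a neighbourhood of $z$. Since $(Z, \Delta)$ is log canonical, $K_Z + \Delta$ is $\Q$-Cartier; write $m$ for its index. After shrinking $Z$ about $z$, I may assume that $\O_Z\bigl(m(K_Z + \Delta)\bigr)$ is trivial, and fix a trivialisation.

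Next I would form the index-one cover $\gamma \colon X \to Z$, namely the normalisation of the degree-$m$ cyclic cover determined by this trivialisation, and set $D := \gamma^{-1}(\Delta)_{\red}$, so that $\gamma \colon (X, D) \to (Z, \Delta)$ becomes a finite morphism of logarithmic pairs. Three features of this construction are decisive: the map $\gamma$ is étale over the smooth locus $Z_{\reg}$, hence étale in codimension one; the pullback satisfies $K_X + D = \gamma^{*}(K_Z + \Delta)$ and is \emph{Cartier} by the defining property of the index-one cover; and $(X, D)$ is again log canonical, since $\gamma$ is crepant and étale in codimension one. The special case assumed in the statement then applies to $(X, D)$ and yields the extension theorem for $\bigwedge^{[p]}$-forms on $(X, D)$. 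Corollary~\ref{cor:c1}, applied with $\Top = \bigwedge^{[p]}$, finally transports this conclusion back to $(Z, \Delta)$, which is exactly what is wanted.

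The step I expect to require the most care is checking that Corollary~\ref{cor:c1} genuinely applies, since its proof rests on Fact~\ref{fact:pullback} and hence on the cover being unramified away from the boundary. Here I would exploit that the only ramification of $\gamma$ occurs over $\Sing Z$, in codimension at least two: on a log resolution these loci are replaced by exceptional divisors that are automatically incorporated into the boundary, so the finite covering trick of Proposition~\ref{prop:finitecoveringtrick} still applies near general points of the exceptional set, where the pulled-back form is controlled by Fact~\ref{fact:pullback}. The remaining points—that the chosen trivialisation produces a normal cyclic cover, and that a crepant cover which is étale in codimension one preserves the log canonical condition—are standard consequences of index-one cover theory, and I would simply cite them rather than reprove them.
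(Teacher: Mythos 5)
Your overall strategy---shrink to make $m(K_Z+\Delta)$ trivial, pass to the index-one cover $\gamma\colon (X,D)\to (Z,\Delta)$, apply the Cartier case upstairs, and descend via the finite covering trick---is indeed the paper's strategy. However, the final step of your argument has a genuine gap: Corollary~\ref{cor:c1} cannot be invoked as a black box, because it only yields the \emph{extension theorem in the sense of Definition~\ref{def:extensionthmholds}}, i.e.\ reflexivity of $\pi_*\Omega^p_{\wtilde Z}\bigl(\log(\wtilde\Delta+E_\Delta)\bigr)$ where $E_\Delta$ is the union of \emph{all} $\pi$-exceptional divisors not contained in $\wtilde\Delta$. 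The conclusion of Theorem~\ref{thm:extension-lc} is strictly stronger: the permitted boundary $\wtilde\Delta_{\lc}$ contains only those exceptional divisors lying over $\Delta\cup(\text{non-klt locus})$, so over klt points not in $\Delta$ the pulled-back form must extend with \emph{no} logarithmic poles at all (e.g.\ for a klt pair with $\Delta=\emptyset$ the theorem asserts reflexivity of $\pi_*\Omega^p_{\wtilde Z}$, not of $\pi_*\Omega^p_{\wtilde Z}(\log E_\Delta)$). Your argument, as written, never controls the pole order along exceptional divisors lying over the klt locus, so it proves only the weaker statement and not Corollary~\ref{cor:c2}.

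The paper closes exactly this gap with two ingredients you omit. First, it uses \cite[5.20]{KM98} not only to conclude that $(X,D)$ is log canonical but, more importantly, that the non-klt locus of $(X,D)$ is contained in $\gamma^{-1}$ of the non-klt locus of $(Z,\Delta)$; consequently, with $\wtilde X$ the normalized fiber product and $\wtilde D_{\lc}$ the analogous boundary upstairs, one has $\wtilde D_{\lc}\subseteq \wtilde\gamma^{-1}(\wtilde\Delta_{\lc})$, so the extension obtained from the Cartier case (transported to $\wtilde X$ via Lemma~\ref{lem:oneresolutionsuffices}) has logarithmic poles only over the allowed locus. Second, instead of Corollary~\ref{cor:c1} it applies the \emph{second} case of Proposition~\ref{prop:finitecoveringtrick}, which permits $E_Z$ to be a proper subset of the exceptional divisors precisely because $\Top=\bigwedge^{[p]}$: by Corollary~\ref{cor:pb2}, regularity (absence of log poles) of a $p$-form along a divisor can be tested after finite pull-back. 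This restriction to wedge powers is essential---the example following Proposition~\ref{prop:finitecoveringtrick} shows the analogous descent fails for $\Sym^{[2]}$---and it is the reason the paper's proof of Corollary~\ref{cor:c2} cannot be routed through Corollary~\ref{cor:c1} alone. To repair your proof, replace the appeal to Corollary~\ref{cor:c1} by the non-klt locus comparison from \cite[5.20]{KM98} together with the second case of Proposition~\ref{prop:finitecoveringtrick}, taking $E_Z$ to be the union of the exceptional components of $\wtilde\Delta_{\lc}$ not contained in $\wtilde\Delta$.
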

\begin{proof}
  Assume that Theorem~\ref{thm:extension-lc} has been shown for all
  log canonical logarithmic pairs whose log canonical divisor is
  Cartier.  Let $(Z, \Delta)$ be an arbitrary logarithmic pair that is
  log canonical with log resolution $\pi: (\wtilde Z, \wtilde \Delta)
  \to (Z, \Delta)$ and assume we are given an open subset $U \subseteq
  Z$ and a form $\sigma \in H^0\bigl( U,\, \Omega^{[p]}_Z(\log \Delta
  )\bigr)$, with $p \in \{\dim Z, \dim Z-1, 1\}$. We need to show that
  \begin{equation}\label{eq:want2}
    \pi^*(\sigma) \in H^0\bigl( \wtilde U,\, \Omega^{[p]}_{\wtilde Z}(\log
    \wtilde \Delta_{\lc}) \bigr),
  \end{equation}
  where $\wtilde U := \pi^{-1}(U)$ and
  $$
  \wtilde \Delta_{\lc} := \text{largest reduced divisor contained in }
  \pi^{-1}\bigl(\Delta \cup \text{non-klt locus of }(Z, \Delta)\bigr).
  $$

  Since the assertion of Theorem~\ref{thm:extension-lc} is local on $Z$ in the
  Zariski topology, we can shrink $Z$ and assume without loss of generality
  that $U = Z$, and that $K_Z+\Delta$ is $\Q$-torsion, i.e., that there exists
  a number $m \in \mathbb N^+$ such that $\sO_Z\bigl(m(K_Z+\Delta)\bigr) \cong
  \sO_Z$. Let $\gamma: (X, D) \to (Z, \Delta)$ be the associated
  index-one-cover, as described in in \cite[2.52]{KM98} or
  \cite[Sect.~3.6f]{Reid87}.  By the inductive assumption, the statement of
  Theorem~\ref{thm:extension-lc} holds for the pair $(X,D)$.

  Since $\gamma$ branches only over the singular points of $(Z,
  \Delta)$, if at all, \cite[5.20]{KM98} immediately gives that $(X,
  D)$ is again log canonical.  Better still, \cite[5.20]{KM98} implies
  that
  $$
  \text{non-klt locus of }(X, D) \subseteq \gamma^{-1} \bigl(\text{non-klt
    locus of }(Z, \Delta)\bigr).
  $$
  Thus, defining $\wtilde X$ as the normalization of $X\times_Z\wtilde
  Z$, $\wtilde\pi:\wtilde X\to X$ the natural morphism, and setting
  $$
  \wtilde D_{\lc} := \text{largest reduced divisor contained in } \wtilde
  \pi^{-1} \bigl( D \cup \text{non-klt locus of }(X,D) \bigr),
  $$
  gives that $\wtilde D_{\lc} \subseteq \gamma^{-1}(\wtilde \Delta_{\lc})$. Now,
  applying the argument from the proof of Corollary~\ref{cor:c1} along with
  case~\iref{il:extlog} of Proposition~\ref{prop:finitecoveringtrick} implies
  \eqref{eq:want2}, as desired.
\end{proof}

\subsection{Finitely dominated and \blc{} pairs}
\label{ssec:dlc}

It follows from Corollary~\ref{cor:c1} that the extension theorem holds for
pairs with quotient singularities, or in fact for pairs that can be locally
finitely dominated by snc pairs. Surface singularities that appear in minimal
model theory often have this property. Because of their importance in the
applications, we discuss one class of examples in more detail here.

\begin{defn}[Finitely dominated pair]
  A logarithmic pair $(Z,\Delta)$ is said to be \emph{finitely
    dominated by analytic snc pairs} if for any point $z \in Z$, there
  exists an analytic neighborhood $U$ of $z$ and a finite, surjective
  morphism of logarithmic pairs $(\wtilde U, D) \to (U,\Delta\cap U)$
  where $\wtilde U$ is smooth and the divisor $D$ has only simple
  normal crossings.
\end{defn}

\begin{rem}\label{rem:extthmforfdsap}
  By Corollary~\ref{cor:c1}, if $\Top$ is any reflexive tensor
  operation, then the extension theorem holds for $\Top$-forms on any
  pair $(Z,\Delta)$ that is finitely dominated by analytic snc pairs.
\end{rem}

\begin{defn}[\blc]\label{def:dlc}
  A logarithmic pair $(Z,\Delta)$ is called \emph{\blc} if
  $(Z,\Delta)$ is log canonical and $(Z\setminus \Delta, \emptyset)$
  is log terminal.
\end{defn}

\begin{example}
  It follows immediately from the definition that dlt pairs are \blc,
  cf.~\cite[2.37]{KM98}. For a less obvious example, let $Z$ be the
  cone over a conic and $\Delta$ the union of two rays through the
  vertex. Then $(Z,\Delta)$ is \blc, but not dlt.
\end{example}

The next example shows how \blc{} pairs appear as limits of dlt pairs.
These limits play an important role in Keel-McKernan's proof of the
Miyanishi conjecture for surfaces, \cite[Sect.~6]{KMcK}, and in the
last two authors' recent attempts to generalize Shafarevich
hyperbolicity to families over higher dimensional base manifolds,
\cite{KK07b, KK08c}, see also~\cite{KS06}.

\begin{example}
  Let $(Z, \Delta)$ be a log canonical logarithmic pair.  Suppose that $\Delta$ is
  $\Q$-Cartier and that for any positive, sufficiently small rational number
  $\varepsilon \in \Q^+$, the non-reduced pair $\bigl( Z, (1-\varepsilon)\Delta
  \bigr)$ is dlt, or equivalently klt. Then $(Z,\Delta)$ is \blc{}.
\end{example}

\begin{lem}\label{ex:dltsurfisfd}
  Let $(Z,\Delta)$ be a \blc{} pair of dimension $2$. Then $Z$ is
  $\mathbb Q$-factorial and $(Z,\Delta)$ is finitely dominated by
  analytic snc pairs. In particular, dlt surface pairs are finitely
  dominated by analytic snc pairs.
\end{lem}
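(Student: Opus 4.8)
The plan is to pass to the analytic topology, fix an arbitrary point $z \in Z$, and show that in a suitable analytic neighbourhood $U$ of $z$ the pair $(Z,\Delta)$ is presented as the quotient of a smooth snc pair by a finite group. Since $\Q$-factoriality and the property of being finitely dominated by analytic snc pairs are both local in the analytic topology, this is enough. The two technical inputs I would use are the classification of two-dimensional klt singularities as quotient singularities, cf.~\cite{KM98}, and the behaviour of log canonicity under finite quasi-\'etale maps, \cite[5.20]{KM98}.

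First I would prove that $Z$ has only quotient singularities. Away from $\Delta$ this is immediate from the \blc{} hypothesis, which says that $(Z \setminus \Delta, \emptyset)$ is log terminal, so that $Z$ has klt, hence quotient, singularities there. For a point $z \in \Delta$ I argue by contradiction, using discrepancies. As $0 \le \emptyset \le \Delta$, the pair $(Z, \emptyset)$ is again log canonical. If it were not klt at the (necessarily isolated, as $Z$ is a normal surface) singular point $z$, there would be a prime divisor $E$ over $z$ with discrepancy $a(E, Z, \emptyset) = -1$ and center $c_Z(E) = \{z\}$. Since $z \in \Delta$, a local equation of $\Delta$ vanishes at the center of $E$, whence $v_E(\Delta) > 0$ and therefore $a(E, Z, \Delta) = a(E, Z, \emptyset) - v_E(\Delta) = -1 - v_E(\Delta) < -1$, contradicting the log canonicity of $(Z, \Delta)$. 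Thus $Z$ is klt, and so a quotient singularity, at every point of $Z$. In particular $Z$ has only quotient singularities, which are $\Q$-factorial; this proves the first assertion.

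For the finite domination I would fix $z \in Z$ and choose a small analytic neighbourhood $U$ together with a quotient presentation $\gamma \colon V \to U \cong V/G$, where $V$ is a smooth analytic ball in $\C^2$ and $G \subset \mathrm{GL}(2,\C)$ is a finite \emph{small} subgroup; with this choice $\gamma$ is \'etale in codimension one. Set $D := \bigl(\gamma^{-1}(\Delta \cap U)\bigr)_{\mathrm{red}}$, a reduced $G$-invariant curve on the smooth surface $V$ satisfying $\gamma^{-1}(\Delta \cap U) = D$ set-theoretically. Because $\gamma$ is unramified in codimension one we have $\gamma^*(\Delta \cap U) = D$ and $K_V + D = \gamma^*(K_U + \Delta\cap U)$, so \cite[5.20]{KM98} transfers log canonicity: since $(U, \Delta \cap U)$ is log canonical, so is $(V, D)$. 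Now a \emph{reduced} log canonical boundary on a smooth surface can have at worst nodes --- any worse reduced plane-curve singularity (cusp, tacnode, ordinary triple point, and so on) has log canonical threshold strictly smaller than $1$ and is excluded at coefficient one --- so $(V, D)$ is in fact snc. Hence $\gamma \colon (V, D) \to (U, \Delta \cap U)$ is a finite surjective morphism of logarithmic pairs with $V$ smooth and $D$ simple normal crossing, exhibiting $(Z, \Delta)$ as finitely dominated by analytic snc pairs. The concluding sentence is then automatic: dlt pairs are \blc{} (as recorded in the example preceding the statement), so the Lemma applies to them verbatim.

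I expect the third step to be the main obstacle. One must be careful to take the group $G$ \emph{small}, so that $\gamma$ ramifies only over the singular point (codimension two) and the reduced preimage $D$ genuinely coincides with the crepant boundary; otherwise the log-ramification correction along quasi-reflection loci would spoil the comparison of log canonicities. The delicate points are thus the correct transfer of log canonicity through \cite[5.20]{KM98} in the quasi-\'etale setting and the identification of the reduced log canonical boundary on the smooth cover as snc, both resting on the classical but nontrivial facts that surface klt singularities are analytic quotient singularities and that at coefficient one the only log canonical plane-curve singularities are nodes.
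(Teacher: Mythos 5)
Your overall strategy --- reduce to quotient singularities, pass to a single smooth cover by a small group, transfer log canonicity through the quasi-\'etale map via \cite[5.20]{KM98}, and then use that a reduced log canonical boundary on a smooth surface is at worst nodal --- is close in spirit to the paper's proof and, where it is correct, somewhat more streamlined (the paper uses a tower of two covers: first an index-one cover making $\Delta$ Cartier, then a cover of the resulting Du~Val singularity, before invoking the classification \cite[4.15]{KM98}). The second half of your argument is sound, but only \emph{granted} the first half.

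The first half, however, contains a genuine circularity. At a point $z \in \Delta$ you assert that ``$(Z,\emptyset)$ is again log canonical'' and compute $a(E,Z,\Delta) = a(E,Z,\emptyset) - v_E(\Delta)$. Both steps presuppose that $K_Z$ --- equivalently, since $K_Z+\Delta$ is $\Q$-Cartier, that $\Delta$ --- is $\Q$-Cartier at $z$: without this, the discrepancy $a(E,Z,\emptyset)$ is not defined, $\Delta$ has no local ($\Q$-)equation at $z$, and the pull-back multiplicity $v_E(\Delta)$ makes no sense. The \blc{} hypothesis only gives that $K_Z+\Delta$ is $\Q$-Cartier and that $K_Z$ is $\Q$-Cartier away from $\Delta$; local $\Q$-factoriality at points of $\Delta$ is precisely one of the two conclusions of the lemma, and in your write-up it is derived only \emph{afterwards}, as a consequence of the quotient-singularity statement that the computation was meant to establish. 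This cannot be waved away, since normal surface singularities need not be $\Q$-factorial (cones over elliptic curves are not). The paper deals with exactly this point first: it checks that $(Z,(1-\varepsilon)\Delta)$ is \emph{numerically} dlt in the sense of \cite[4.1]{KM98} --- Mumford's numerical pull-back on surfaces requires no $\Q$-Cartier hypotheses --- and concludes that $Z$ is $\Q$-factorial from \cite[4.11]{KM98}; only then does it run the discrepancy comparison, after an index-one cover has made $\Delta$ Cartier. Your proof becomes correct if you insert such a step (or systematically replace discrepancies by numerical discrepancies) before the computation at $z \in \Delta$; as written, it assumes what it sets out to prove.
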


The proof of Lemma~\ref{ex:dltsurfisfd} uses the notion of discrepancy, which
we recall for the reader's convenience.

\begin{defn}[\protect{Discrepancy, cf.~\cite[Sect.~2.3]{KM98}}]\label{defn:discrep}
  Let $(Z, \Delta)$ be a logarithmic pair and let $\pi: (\wtilde Z, \wtilde
  \Delta) \to (Z, \Delta)$ be a log-resolution. If $\wtilde \Delta' \subset
  \wtilde \Delta$ is the strict transform of $\Delta$, the $\mathbb
  Q$-divisors $K_{\wtilde Z} + \wtilde \Delta'$ and $\pi^*\bigl( K_Z + \Delta
  \bigr)$ differ only by a $\mathbb Q$-linear combination of exceptional
  divisors. We can therefore write
  $$
  K_{\wtilde Z} + \wtilde \Delta' = \pi^*\bigl( K_Z + \Delta \bigr) + \sum_{%
    \begin{matrix} 
      \text{\footnotesize $E_i\subset \wtilde Z$}\\ 
      \hskip-10ex
      \text{\footnotesize $\pi$-exceptional 
        divisors}\hskip-10ex
    \end{matrix}
  }
  a(E_i, Z, \Delta) \cdot E_i.
  $$
  The rational number $a(E_i, Z, \Delta)$ is called the \emph{discrepancy of
    the divisor $E_i$}.
\end{defn}

\begin{proof}[Proof of Lemma~\ref{ex:dltsurfisfd}]
  Let $z \in (Z,\Delta)_{\sing}$ be an arbitrary singular point. If $z
  \not \in \Delta$, then the statement follows from \cite[4.18]{KM98}.
  We can thus assume without loss of generality for the remainder of
  the proof that $z \in \Delta$.

  Next observe that for any rational number $0<\varepsilon<1$, the
  non-reduced pair $\left(Z,(1-\varepsilon) \Delta\right)$ is
  \emph{numerically dlt}; see \cite[4.1]{KM98} for the definition and
  use \cite[3.41]{KM98} for an explicit discrepancy computation. By
  \cite[4.11]{KM98}, $Z$ is then $\bQ$-factorial. Using
  $\bQ$-factoriality, we can then choose a sufficiently small Zariski
  neighborhood $U$ of $z$ and consider the index-one cover for
  $\Delta\cap U$. This gives a finite morphism of pairs $\gamma :
  (\wtilde U, \wtilde \Delta) \to (U,\Delta\cap U)$, where the
  morphism $\gamma$ is branched only over the singularities of $U$,
  where $\gamma^{-1}(z) = \{\wtilde z\}$ is a single point, and where
  $\wtilde \Delta= \gamma^*(\Delta\cap U)$ is
  Cartier---see~\cite[5.19]{KM98} for the construction. Since
  discrepancies only increase under taking finite covers,
  \cite[5.20]{KM98}, the pair $(\wtilde U, \wtilde \Delta)$ will again
  be \blc{}. In particular, it suffices to prove the claim for a
  neighborhood of $\wtilde z$ in $(\wtilde U, \wtilde \Delta)$. We can
  thus assume without loss of generality that $z \in \Delta$ and that
  $\Delta$ is Cartier in our original setup.

  Next, we claim that $(Z, \emptyset)$ is canonical at $z$.  In fact,
  let $E$ be any divisor centered above $z$, as in \cite[2.24]{KM98}.
  Since $z \in \Delta$, and since $\Delta$ is Cartier, the pull-back
  of $\Delta$ to any resolution where $E$ appears will contain $E$
  with multiplicity at least $1$. In particular, we have the following
  inequality of discrepancies: $0 \leq a(E, Z, \Delta) +1 \leq a(E, Z,
  \emptyset)$.  This shows that $(Z, \emptyset)$ is canonical at $z$
  as claimed.

  By \cite[4.20-21]{KM98}, it is then clear that $Z$ has a Du~Val
  quotient singularity at $z$. Again replacing $Z$ by a finite cover
  of a suitable neighborhood of $z$, and replacing $z$ by its preimage
  in the covering space, we can henceforth assume without loss of
  generality that $Z$ is smooth. But then the claim follows from
  \cite[4.15]{KM98}.
\end{proof}

\begin{rem}
  It follows from a result of Brieskorn, \cite{Brieskorn68}, that any
  two-dimensional pair $(X, \Delta)$ that is finitely dominated by
  analytic snc pairs has quotient singularities in the following
  sense: For every point $x \in X$ there exists a finite subgroup $G
  \subset GL_2(\C)$ without quasi-reflections, an analytic
  neighborhood $U$ in $X$, and a biholomorphic map $\varphi: U \to V$
  to an analytic neighborhood $V$ of $\pi(0,0)$ in $\C^2 /G$, where
  $\pi: \C^2 \to \C^2/G$ denotes the quotient map. Furthermore, the
  preimage $\pi^{-1}(\varphi(\Delta \cap V))$ coincides with the
  intersection of $a_1D_1 + a_2D_2$ with $\pi^{-1}(V)$, where $a_j \in
  \{0,1\}$ and $D_j = \{z_j = 0\} \subset \C^2$.
\end{rem}

\section{Vector fields and local group actions on singular spaces}

In this section, we discuss vector fields on singular complex spaces
and their relation to local Lie group actions. We will then show that
local group actions and vector fields lift to functorial resolutions.
This will be used in the proof of the extension theorem for $(n-1)$-
forms in Section~\ref{sec:ext-n-1}.

\subsection{Local actions and logarithmic vector fields}

For the reader's convenience, we recall the standard definition of a
local group action.

\begin{defn}[Local group action, \protect{cf.~\cite[Sect.~4]{Kaup65}}]
  Let $G$ be a connected complex Lie group and $Z$ a reduced complex
  space. A \emph{local $G$-action} is given by a holomorphic map
  $\varPhi: \Theta \to Z$, where $\Theta$ is an open neighborhood of
  the neutral section $\{e\} \times Z$ in $G \times Z$ such that
  \begin{enumerate}
  \item for all $z \in Z$ the subset $\Theta(z) := \{g \in G \mid
    (g,z) \in \Theta \}$ is connected,
  \item \ilabel{item:localaction}setting $\varPhi(g,z) =: g\acts z$,
    we have $e \acts z = z$ for all $z \in Z$, and if $(gh,z) \in
    \Theta$, if $(h,z) \in \Theta$ and $(g, h\acts z) \in \Theta$,
    then $(gh)\acts z = g\acts(h\acts z)$ holds.
  \end{enumerate}
\end{defn}

There is a natural notion of equivalence of local $G$-actions on $Z$
given by shrinking $\Theta$ to a smaller neighborhood of $\{e\} \times
Z$ in $G\times Z$. To an equivalence class of actions one assigns a
linear map $\lambda$ from the Lie algebra $\mathfrak{g}$ of $G$ into
the Lie algebra $H^0\big(Z,\, \sT_Z\bigr)$ of vector fields on $Z$, as
follows. If $\xi \in \mathfrak{g}$ is any element of the Lie algebra,
its image $\xi_Z = \lambda(\xi)$ is defined by the equation
$$
\xi_Z(f)(z) = \left.\frac{d}{dt}\right|_{t=0}f\bigl( \exp_G (-t\xi)
\acts z \bigr),
$$
where $f$ is an arbitrary holomorphic function defined near $z$ and
$\exp_G: \mathfrak{g} \to G$ is the exponential map of $G$. If we
consider $\mathfrak{g}$ as the Lie algebra of left-invariant vector
fields on $G$, the map $\lambda$ is a homomorphism of Lie
algebras. The converse statement is a classical result of complex
analysis.

\begin{fact}[\protect{Vector fields and local group actions, \cite[Satz~3]{Kaup65}}]
  If $\lambda: \mathfrak{g} \to H^0\bigl(Z,\, \sT_Z\bigr)$ is a
  homomorphism of Lie algebras, then up to equivalence, there exists a
  unique local $G$-action on $Z$ that induces the given $\lambda$. In
  particular, any vector field $\eta \in H^0\bigl(Z,\, \sT_Z\bigr)$
  induces a local $\mathbb{C}$-action $\Phi_\eta$ on $Z$. \qed
\end{fact}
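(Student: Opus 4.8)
The plan is to reduce the statement to the integration of holomorphic vector fields on $Z$ and then to assemble these flows into a genuine $G$-action, using the Lie algebra structure to guarantee the group law. I would begin with the one-parameter case, which is also the content of the final assertion: given a single $\eta \in H^0\bigl(Z,\, \sT_Z\bigr)$, I construct its local flow $\Phi_\eta$. Working locally, embed an open piece of $Z$ as a closed analytic subspace of a domain $\Omega \subset \bC^N$ with ideal sheaf $\sI_Z$. A holomorphic vector field on $Z$ is a $\bC$-derivation of $\sO_Z$, and I would lift $\eta$ to a holomorphic vector field $\tilde\eta$ on $\Omega$ that is \emph{tangent} to $Z$, i.e.\ satisfies $\tilde\eta(\sI_Z) \subseteq \sI_Z$. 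Integrating $\tilde\eta$ by the Cauchy existence and uniqueness theorem for holomorphic ordinary differential equations yields a local flow near $\{0\} \times Z$; the tangency condition $\tilde\eta(\sI_Z) \subseteq \sI_Z$ forces this flow to preserve $Z$, so it restricts to the desired local $\bC$-action $\Phi_\eta$ on $Z$. Uniqueness is immediate, since on functions the flow is forced to be $\Phi_\eta(t,\cdot)^* f = \sum_{k \ge 0} \frac{(-t)^k}{k!}\,\eta^k(f)$, matching the sign convention used above.

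For a general connected $G$, I would encode $\lambda$ as a flat connection on the trivial fibration $p: G \times Z \to G$. For $\xi \in \mathfrak{g}$ let $\xi^L$ be the associated left-invariant field on $G$, and set $\tilde\xi := \xi^L \oplus \lambda(\xi)$, a vector field on $G \times Z$. Since $[\xi^L, \eta^L] = [\xi,\eta]^L$ and, because $\lambda$ is a homomorphism, $[\lambda(\xi), \lambda(\eta)] = \lambda([\xi,\eta])$, one obtains $[\tilde\xi, \tilde\eta] = \widetilde{[\xi,\eta]}$, so the span of the $\tilde\xi$ is an involutive distribution that maps isomorphically onto $TG$ under $dp$. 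The action is then defined by parallel transport: $g \acts z$ is the $Z$-endpoint of the horizontal lift, starting at $(e,z)$, of a path in $G$ from $e$ to $g$. Each such lift is obtained by integrating a time-dependent holomorphic vector field on $Z$, hence exists by the one-parameter construction above; involutivity (flatness) makes the transport independent of the chosen path in a simply connected neighbourhood of $e$, which produces the associativity axiom of a local action, while left-invariance of $\xi^L$ renders the construction compatible with group multiplication. Differentiating along the one-parameter subgroups $t \mapsto \exp_G(t\xi)$ recovers $\lambda$.

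Finally, uniqueness up to equivalence follows because any local action inducing $\lambda$ must send $\exp_G(t\xi) \acts z$ to $\Phi_{\lambda(\xi)}(t,z)$ by uniqueness of flows; since $\exp_G$ is a local biholomorphism and a neighbourhood of $e$ is generated by one-parameter subgroups, two such actions agree near $\{e\} \times Z$ and are therefore equivalent. I expect the main obstacle to be the analytic heart of the first step: integrating holomorphic vector fields on a possibly singular reduced space, where smooth ODE theory does not apply on $Z$ itself. The real work lies in lifting a derivation of $\sO_Z$ to an ambient field preserving $\sI_Z$ and in checking that the ambient flow — including the time-dependent version needed for parallel transport — maps $Z$ into $Z$ and restricts to a holomorphic map. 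Once this integration lemma is in place, the Frobenius-type bookkeeping on $G \times Z$ reduces to repeated integration of such fields and poses no further difficulty.
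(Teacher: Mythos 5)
The paper does not prove this statement at all: it is quoted as a classical fact from \cite[Satz~3]{Kaup65} and closed without proof, so there is no internal argument to compare yours against. Judged on its own terms, your plan is essentially the classical Kaup--Rossi argument and is sound. Your one-parameter step is precisely the standard integration lemma for vector fields on complex spaces: writing $\tilde\eta = \sum_i a_i\, \partial/\partial z_i$ with $a_i|_Z = \eta(z_i|_Z)$, the tangency $\tilde\eta(\sI_Z) \subseteq \sI_Z$ is in fact automatic (two derivations of $\sO_\Omega$ into $\sO_Z$ over the restriction map that agree on the coordinates agree, by $\mathfrak{m}$-adic continuity), and the ambient flow preserves $Z$ because, writing $\sI_Z = (f_1, \dots, f_k)$ and $\tilde\eta(f_i) = \sum_j h_{ij} f_j$, the functions $t \mapsto f_i(\phi_t(z))$ solve a linear ODE system with zero initial data whenever $z \in Z$. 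Your Lie-series formula for uniqueness is also consistent with the paper's sign convention, and analyticity in $t$ makes it determine the flow, so local flows patch and uniqueness of the action follows as you say.

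One step needs more care than your closing sentence concedes: the ``Frobenius-type bookkeeping'' on $G \times Z$ cannot be carried out on an ambient model, because local lifts $V_\xi$ of $\lambda(\xi)$ satisfy $[V_\xi, V_{\xi'}] = V_{[\xi,\xi']}$ only modulo vector fields whose coefficients lie in $\sI_Z$; thus the lifted distribution on $G \times \Omega$ is generally \emph{not} involutive, and the Frobenius theorem is not directly available on the singular space $G \times Z$ either. The standard repair is the one your setup already permits: each parallel transport map is a local biholomorphism between open subsets of $Z$, hence preserves the regular locus $Z_{\reg}$; prove path-independence of transport on the manifold $G \times Z_{\reg}$ by the usual flatness computation, and then extend the resulting identities (path-independence, the local action axiom, compatibility with $\lambda$) to all of $Z$ by the identity principle, using that $Z$ is reduced and $Z_{\reg}$ is dense. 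With this amendment, together with the routine verification that transport depends holomorphically on $g$ (holomorphic parameter dependence of solutions of ODEs in the ambient charts), your outline yields a complete proof of the cited fact.
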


We also note that if $(Z,\Delta)$ is a logarithmic pair, then the
local $\mathbb{C}$-actions stabilizing $\Delta$ are precisely the ones
that correspond to logarithmic vector fields, i.e.\ global sections of
$\sT_Z (-\log \Delta)$.

\medskip

The next result is crucial for the lifting property of local group
actions.

\begin{lem}[Smoothness of the action map]\label{lem:flatlocalactions}
  The action map $\varPhi: \Theta \to Z$ of a local $G$-action is
  smooth, i.e., a flat submersion.
\end{lem}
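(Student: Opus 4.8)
The plan is to realise $\varPhi$ locally as the second-factor projection of the \emph{shear map} $\Psi\colon\Theta\to G\times Z$, $(g,z)\mapsto(g,\,g\acts z)$, so that $\varPhi=\mathrm{pr}_Z\circ\Psi$. Since $\mathrm{pr}_Z\colon G\times Z\to Z$ is manifestly a flat submersion (its fibre is the smooth group $G$), it suffices to prove that $\Psi$ is a local biholomorphism at every point of $\Theta$: the local model $G\times Z\to Z$ then exhibits $\varPhi$ as flat, submersive, and with smooth fibres, which is precisely what ``smooth'' means for a morphism to the possibly singular space $Z$.

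First I would treat points of the neutral section. Near $(e,z_0)$ the candidate inverse of $\Psi$ is $(g,w)\mapsto(g,\,g^{-1}\acts w)$: for $g$ close to $e$ and $w$ close to $z_0$ the three membership conditions required by the associativity axiom of a local $G$-action --- namely $(e,w)\in\Theta$, $(g^{-1},w)\in\Theta$, and $(g,\,g^{-1}\acts w)\in\Theta$ --- all hold because $\Theta$ is open and contains $\{e\}\times Z$, and the associativity relation then yields $g\acts(g^{-1}\acts w)=w$ and $g^{-1}\acts(g\acts z)=z$. Hence $\Psi$ is a local biholomorphism at each point of $\{e\}\times Z$. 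Reading off second components for fixed $g$ close to $e$, this gives the consequence I shall use repeatedly: for every $p\in Z$ the translation $\Phi_g:=(g\acts-)$ is, for $g$ sufficiently close to $e$, a local biholomorphism near $p$, with inverse $\Phi_{g^{-1}}$.

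The hard part is passing from the neutral section to an arbitrary $(g_0,z_0)\in\Theta$, because $g_0^{-1}$ need not act (it may lie outside $\Theta(z_0)$), so one cannot simply invert by $g\mapsto g^{-1}$. Here I would invoke the connectedness hypothesis on $\Theta(z_0)$: choose a path $\gamma\colon[0,1]\to\Theta(z_0)$ from $e$ to $g_0$ and show that $S:=\{t:\Phi_{\gamma(t)}\text{ is a local biholomorphism at }z_0\}$ is both open and closed in $[0,1]$. Both statements are proved identically: if $t$ is close to a given $t_*$, write $\gamma(t)=a\,\gamma(t_*)$ with $a=\gamma(t)\gamma(t_*)^{-1}$ close to $e$; the associativity relation (valid on a neighbourhood of $z_0$ by openness of $\Theta$, exactly as above) gives $\Phi_{\gamma(t)}=\Phi_a\circ\Phi_{\gamma(t_*)}$ near $z_0$, and a composition of local biholomorphisms is again one. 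As $0\in S$ and $[0,1]$ is connected, $S=[0,1]$, so $\Phi_{g_0}$ is a local biholomorphism at $z_0$.

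Finally I would assemble the pieces. The associativity relation yields $\varPhi(g,z)=\varPhi\bigl(g g_0^{-1},\,g_0\acts z\bigr)$ for $(g,z)$ near $(g_0,z_0)$; that is, $\varPhi=\varPhi\circ\Xi$ near $(g_0,z_0)$, where $\Xi(g,z)=\bigl(g g_0^{-1},\,\Phi_{g_0}(z)\bigr)$ sends a neighbourhood of $(g_0,z_0)$ to one of $(e,\,g_0\acts z_0)$. The map $\Xi$ is a local biholomorphism, being the product of the right translation $g\mapsto g g_0^{-1}$ on $G$ with the local biholomorphism $\Phi_{g_0}$ of the previous paragraph, while $\varPhi$ is a flat submersion near $(e,\,g_0\acts z_0)$ by the neutral-section case. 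Since a flat submersion precomposed with a local biholomorphism is again a flat submersion, $\varPhi$ is a flat submersion near $(g_0,z_0)$. As the point was arbitrary, the lemma follows.
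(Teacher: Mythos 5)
Your proof is correct and follows essentially the same route as the paper: both realise $\varPhi$ near the neutral section as the projection onto $Z$ composed with the shear map $\varPsi(g,z)=(g,\,g\acts z)$, which is an open embedding, and then transport smoothness to an arbitrary point of $\Theta$ by local equivariance. The paper compresses the reduction to the neutral section into the single phrase ``since $\varPhi$ is locally equivariant''; your path-connectedness argument in $\Theta(z_0)$ supplies exactly the justification of that phrase, and makes explicit where the hypothesis that $\Theta(z_0)$ is connected is used.
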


\begin{proof}
  Since the map $\varPhi$ is locally equivariant, it suffices to show
  that it is smooth at points of the form $(e, z) \in \Theta$. Given
  such a point $(e, z) \in \Theta$, there exits an open neighborhood
  $\varXi=\varXi(e)$ of the identity $e\in G$ and two open
  neighborhoods $U, U'$ of $z$ in $Z$ such that
  $$
  \varPsi: \varXi \times U \to \varXi \times U', (g, z) \mapsto (g,
  \varPhi(g,z))
  $$
  is well-defined. The map $\varPsi$ is an open embedding; in
  particular, it is smooth. If we denote the canonical (smooth)
  projection by $\pi_2: \varXi \times U' \to U'$, the claim follows
  from the observation that $\varPhi|_{\varXi \times U} = \pi_2 \circ
  \varPsi$ is the composition of smooth morphisms.
\end{proof}

\subsection{Lifting vector fields to functorial resolutions}

Unlike in the surface case, there is no notion of a 'minimal
resolution of singularities' in higher dimensions. There is, however,
a canonical resolution procedure that has certain universal
properties. We briefly recall the relevant facts.

\begin{thm}[\protect{Functorial resolution of singularities, cf.~\cite[Thm.~3.35, 3.45]{Kollar07}}]
  There exists a \emph{resolution functor} $\mathcal{R}: (Z, \Delta)
  \to \bigl(\pi_{Z,\Delta}: R(Z, \Delta) \to (Z, \Delta) \bigr)$ that
  assigns to any logarithmic pair $(Z, \Delta)$ a new pair $R(Z,
  \Delta)$ and a morphism $\pi_{Z,\Delta} : R(Z, \Delta) \to (Z,
  \Delta)$, with the following properties.
  \begin{enumerate}
  \item The morphism $\pi := \pi_{Z, \Delta}: R(Z,\Delta) \to
    (Z,\Delta)$ is a log resolution of $(Z, \Delta)$.
  \item The morphism $\pi$ is projective over any compact subset of
    $Z$.
  \item The functor $\mathcal{R}$ commutes with smooth holomorphic
    maps. That is to say that for any smooth morphism $f: (X,D) \to
    (Z, \Delta)$ of logarithmic pairs there exists a unique smooth
    morphism $\mathcal{R}(f): R(X, D) \to R(Z,\Delta)$ giving a fiber
    product square as follows.
    $$
    \xymatrix{ R(X,D) \ar_{\pi_{X,D}}[d]\ar^{\mathcal{R}(f)}[r]&
      R(Z,\Delta)\ar^{\pi_{Z,\Delta}}[d] \\
      (X,D) \ar^{f}[r]& (Z, \Delta).}
    $$
    \qed
  \end{enumerate}
\end{thm}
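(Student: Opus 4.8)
The plan is to deduce the three asserted properties from the canonical embedded resolution algorithm for pairs, in the form presented in \cite{Kollar07}. Recall that such an algorithm resolves $(Z,\Delta)$ by a finite sequence of blow-ups
$$
R(Z,\Delta) = Z_N \to Z_{N-1} \to \cdots \to Z_0 = Z,
$$
where each $Z_{k+1} \to Z_k$ is the blow-up along a smooth center $C_k \subset Z_k$. The center $C_k$ is not chosen by hand but is singled out canonically as the locus where a \emph{resolution invariant} $\mathrm{inv}_k$ attains its maximum. Once the algorithm is available, Properties (1) and (2) are essentially formal: Property (1) is precisely the statement that the output is a log resolution in the sense of Definition~\ref{def:everythinglog2b}, which is the defining output of embedded resolution of the pair; and Property (2) follows because each blow-up of a smooth center is projective, a finite composition of projective morphisms is projective, and over any compact subset of $Z$ only finitely many of the (locally finitely many) blow-ups are nontrivial.

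The substance of the theorem is Property (3), the compatibility with smooth morphisms. First I would record the key feature of the resolution invariant: $\mathrm{inv}$ is \emph{functorial for smooth morphisms}, i.e.\ if $g: X \to Z$ is smooth, then $\mathrm{inv}^X = \mathrm{inv}^Z \circ g$ as functions. This is built into the construction of $\mathrm{inv}$ via maximal contact and coefficient ideals, all of whose ingredients are compatible with smooth base change; equivalently, since a smooth morphism is \'etale-locally a projection $V \times \mathbb{A}^r \to V$, the invariant does not see the extra smooth directions. Granting this, I would argue by induction on the length $N$ of the resolution sequence. At each stage the maximal locus of $\mathrm{inv}^X_k$ is exactly $g_k^{-1}$ of the maximal locus of $\mathrm{inv}^Z_k$, so the canonical center on $X_k$ equals the preimage of the canonical center on $Z_k$. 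Because blow-ups commute with flat base change when the center pulls back to the center, each square in the resolution tower is a fiber product, and composing them yields $R(X,D) = X \times_Z R(Z,\Delta)$.

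The induced morphism $\mathcal{R}(f): R(X,D) \to R(Z,\Delta)$ is then obtained as the projection from this fiber product; it is smooth because smoothness is stable under base change, and it is unique by the universal property of the fiber product. One still has to check compatibility with the boundary divisors, i.e.\ that the canonical centers respect the logarithmic structure, but this is again guaranteed by working throughout with the embedded algorithm for the pair $(Z,\Delta)$ rather than for $Z$ alone, so that the decorations entering $\mathrm{inv}$ already encode $\Delta$.

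The step I expect to be the main obstacle is the smooth-functoriality of the resolution invariant $\mathrm{inv}$ asserted above. This is the technical heart of the theory: it requires setting up the invariant so that every one of its constituents---orders of ideals, hypersurfaces of maximal contact, companion and coefficient ideals---transforms correctly under smooth morphisms, and verifying that the auxiliary choices made in the construction, such as the choice of maximal contact, do not affect the resulting center. Once this invariance is in place, the remaining steps are bookkeeping on the blow-up tower, and for our purposes we are content to invoke \cite[Thm.~3.35, 3.45]{Kollar07} for these facts.
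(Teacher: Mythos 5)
The paper itself offers no proof of this statement: it is quoted as a known background result, with the ``\qed'' standing in for the citation to \cite[Thms.~3.35 and 3.45]{Kollar07}. Your proposal is correct and in substance the same as the paper's treatment---your sketch of the internals (canonical centers cut out by a smooth-functorial invariant, so that the blow-up towers pull back as fiber products along smooth morphisms, smoothness and uniqueness of $\mathcal{R}(f)$ following from base change and birationality) accurately reflects how the cited reference proceeds, and you defer the genuinely hard step, the smooth-functoriality of the invariant, to that same reference, exactly as the paper does.
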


\begin{notation}
  We call a log resolution $\pi: (\wtilde Z, \wtilde \Delta) \to (Z,
  \Delta)$ \emph{functorial} if it is of the form $\mathcal{R} (Z,
  \Delta)$.
\end{notation}

\begin{prop}[Lifting of local actions to the functorial resolution]\label{prop:localactionslift}
  Let $\varPhi: \Theta \to Z$ be a local $G$-action on a complex space
  $Z$. Let $\pi: (\wtilde Z, \emptyset) \to (Z, \emptyset)$ be a
  functorial log resolution. Then, $\varPhi$ lifts to a local
  $G$-action on $\wtilde Z$. More precisely, if $\wtilde\Theta :=
  (\Id_G \times \pi)^{-1}(\Theta) \subset G \times \wtilde Z$, then
  there exits a local action $\wtilde \varPhi: \wtilde \Theta \to
  \wtilde Z$ such that the following diagram commutes:
  $$
  \begin{xymatrix}{
      \wtilde\Theta \ar^{\wtilde \varPhi}[r]\ar_{\Id_G \times \pi}[d] &
      \wtilde Z \ar^{\pi}[d]\\
      \Theta \ar^{\varPhi}[r]&Z.  }
  \end{xymatrix}
  $$
  Furthermore, if $(Z,\Delta)$ is a logarithmic pair, if $\pi:
  (\wtilde Z, \wtilde \Delta) \to (Z, \Delta)$ is a functorial log
  resolution, if $\varPhi=\varPhi_\xi$ for some $\xi \in H^0\bigl(Z,\,
  \sT_Z(-\log \Delta)\bigr)$ and if $W$ is any $\varPhi$-invariant
  subvariety of $Z$, we set
  $$
  \wtilde \Delta_W := \text{largest reduced divisor contained in
  }\pi^{-1}(\Delta \cup W).
  $$
  Then, $\wtilde \varPhi$ stabilizes $\wtilde \Delta_W$.
\end{prop}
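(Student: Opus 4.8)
The plan is to realise the lifted action $\wtilde\varPhi$ as an instance of the functoriality of the resolution functor $\mathcal R$, applied to the two smooth maps $\Theta \to Z$ that come packaged with a local action. Write $p : \Theta \to Z$ for the restriction to $\Theta$ of the second projection $G \times Z \to Z$. Since $G$ is a complex manifold, $p$ is smooth, and $\varPhi$ is smooth by Lemma~\ref{lem:flatlocalactions}; I would regard both as smooth morphisms of pairs $(\Theta, \emptyset) \to (Z, \emptyset)$. The key point is that the functorial resolution assigns a single space $R(\Theta, \emptyset)$ together with a single map $\pi_\Theta$ to the pair, independently of the chosen morphism down to $Z$. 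Feeding in $p$, the fibre-product square of the resolution theorem identifies $R(\Theta, \emptyset)$ with $\Theta \times_Z \wtilde Z = \wtilde\Theta$ and $\pi_\Theta$ with $\Id_G \times \pi$; feeding in $\varPhi$ produces a smooth morphism $\mathcal R(\varPhi) : R(\Theta, \emptyset) \to \wtilde Z$ sitting in a fibre-product square over $\varPhi$. Setting $\wtilde\varPhi := \mathcal R(\varPhi) : \wtilde\Theta \to \wtilde Z$ then gives exactly the commuting square of the proposition, since $\pi \circ \mathcal R(\varPhi) = \varPhi \circ \pi_\Theta$ and $\pi_\Theta = \Id_G \times \pi$.

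It remains to check that $\wtilde\varPhi$ satisfies the axioms of a local $G$-action, and I expect the cocycle identity to be the main technical point. Connectedness of $\wtilde\Theta(\tilde z) = \Theta(\pi(\tilde z))$ is inherited from $\varPhi$. For the identity and associativity axioms I would argue by agreement on a dense open set. Let $Z^\circ \subseteq Z$ be the dense open locus over which $\pi$ is biholomorphic and put $\wtilde Z^\circ := \pi^{-1}(Z^\circ)$. Both $\wtilde\varPhi(e, -)$ and $\Id_{\wtilde Z}$ lift $\Id_Z$ and coincide on $\wtilde Z^\circ$; since $\wtilde Z$ is reduced and the target separated, they agree everywhere, so $e \acts \tilde z = \tilde z$. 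For associativity, the two holomorphic maps $(g,h,\tilde z) \mapsto (gh)\acts \tilde z$ and $(g,h,\tilde z) \mapsto g \acts (h \acts \tilde z)$, defined on the open set $\wtilde\Xi \subseteq G \times G \times \wtilde Z$ where all three products make sense, both project under $\pi$ to the corresponding maps built from $\varPhi$, which are equal by the action axiom for $\varPhi$. The locus of $\wtilde\Xi$ over which all relevant points stay in $Z^\circ$ is the complement of a proper analytic subset (preimages of $Z \setminus Z^\circ$ under the open maps $z \mapsto z$, $z \mapsto h\acts z$, $z \mapsto (gh)\acts z$), hence dense, so the two maps agree there and therefore everywhere. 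Equivalently, and more conceptually, both maps arise by applying the functor $\mathcal R$ to the two equal smooth maps $\Xi \to Z$ encoding associativity of $\varPhi$, whence the uniqueness of the functorial lift forces them to coincide.

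For the final assertion, suppose $\varPhi = \varPhi_\xi$ with $\xi$ logarithmic along $\Delta$, so that the flow $\varPhi$ stabilises $\Delta$; by hypothesis it also stabilises $W$, hence the closed analytic set $\Delta \cup W$. The commuting square $\pi \circ \wtilde\varPhi = \varPhi \circ (\Id_G \times \pi)$ shows that $A := \pi^{-1}(\Delta \cup W)$ is $\wtilde\varPhi$-invariant: if $\pi(\tilde z) \in \Delta \cup W$, then $\pi(g \acts \tilde z) = g \acts \pi(\tilde z) \in \Delta \cup W$. Finally, for each fixed $g$ the partial map $\wtilde\varPhi(g, -)$ is a local biholomorphism, with inverse $\wtilde\varPhi(g^{-1}, -)$, so it preserves the codimension of analytic subsets and therefore maps the pure-codimension-one part of the invariant set $A$ into itself. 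As $\wtilde\Delta_W$ is by definition this divisorial part of $A$, we conclude that $\wtilde\varPhi$ stabilises $\wtilde\Delta_W$. The construction and the verification of the axioms apply verbatim to the functorial resolution of $(Z,\Delta)$, so the lift in the logarithmic setting satisfies the same commuting square and stabilises $\wtilde\Delta_W$.
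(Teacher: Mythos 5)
Your proposal is correct and follows essentially the same route as the paper: the lift $\wtilde \varPhi$ is obtained by applying the resolution functor $\mathcal{R}$ to the two smooth morphisms $\Theta \to Z$ (the projection and the action map, the latter smooth by Lemma~\ref{lem:flatlocalactions}), the local-action axioms are checked by agreement over the locus where $\pi$ is biholomorphic, and stabilization of $\wtilde \Delta_W$ follows from invariance of $\pi^{-1}(\Delta \cup W)$ together with preservation of codimension. One small caveat: the domain $\wtilde\Theta$ of a local action need not be symmetric, so $\wtilde\varPhi(g^{-1},-)$ may fail to be defined at $g \acts \wtilde z$; the fact that each $\wtilde\varPhi(g,-)$ is a local biholomorphism should instead be justified by a chain argument along the connected set $\wtilde\Theta(\wtilde z)$ -- which is exactly the connectedness that the paper's own (equally terse) proof invokes at this step.
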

\begin{proof}
  Using Lemma~\ref{lem:flatlocalactions} and the fact that
  $\mathcal{R}$ commutes with smooth holomorphic maps, we see that the
  application of $\mathcal{R}$ to the diagram
  $$
  G \times Z \hookleftarrow \Theta  \to Z
  $$
  induces a holomorphic map $\wtilde \varPhi: \bigl(\Id_G \times \pi
  \bigr)^{-1}(\Theta)=: \wtilde \Theta \to \wtilde Z $ such that the
  following diagram commutes:
  $$
  \begin{xymatrix}{ G \times \wt Z \ar_{\Id_G \times \pi}[d]&&
      \ar[ll]_(.4){\text{inclusion}} \wtilde\Theta \ar^{\wtilde
        \varPhi}[r]\ar[d]&
      \wtilde Z \ar^{\pi}[d]\\
      G \times Z && \ar[ll]^(.4){\text{inclusion}} \Theta
      \ar^{\varPhi}[r]&Z.  }
  \end{xymatrix}
  $$
  It remains to check that $\wtilde \varPhi: \wtilde \Theta \to
  \wtilde Z$ defines a local $G$-action.  First, notice that $\wtilde
  \Theta$ is an open neighborhood of the neutral section $\{e\} \times
  \wtilde Z$ in $G \times \wtilde Z$. By construction, for a point
  $\wtilde z \in \wtilde Z$ we have
  \begin{equation}\label{eq:fibre}
    \wtilde \Theta (\wtilde z) = \Theta (\pi(\wtilde z)).
  \end{equation}
  Furthermore, we have $g \acts \pi(\wtilde z) = \pi (g \acts \wtilde
  z)$ for all $\wtilde z\in \wtilde Z$ and for all $g \in \wtilde
  \Theta (\wtilde z)$.  It immediately follows that $\wtilde \Theta
  (\wtilde z)$ is connected for all $\wtilde z \in \wtilde Z$. Since
  the biholomorphic map $\wtilde \varPhi_e: \wtilde Z \to \wtilde Z$
  fixes any point in $\wtilde Z \setminus \Exc(\pi)$, it coincides
  with $\Id_{\wtilde Z}$. Given $\wtilde z \in \wtilde Z$ let $g, h
  \in G$ be such that the assumptions of \iref{item:localaction} are
  fulfilled.  By \eqref{eq:fibre} there exists an open neighborhood
  $U$ of $\pi(\wtilde z)$ in $Z$ such that both $\wt \varPhi_{gh}$ and
  $\wt \varPhi_{g} \circ \wt \varPhi_h$ are defined on
  $\pi^{-1}(U)$. Since they coincide on $\pi^{-1}(U) \setminus
  \Exc(\pi)$, they coincide at $\wtilde z$. Hence, we have shown that
  $\wtilde \varPhi: \wtilde \Theta \to \wtilde Z$ is a local
  $G$-action.

  If $(Z, \Delta)$ is a logarithmic pair, if $\xi \in
  H^0\bigl(Z,\,\sT_Z(-\log \Delta)\bigr)$ is a logarithmic vector
  field, and if $W$ is a $\varPhi_\xi$-invariant subvariety of $Z$,
  for all $\wtilde z \in \wt \Delta_W$ and for all $g \in \wtilde
  \Theta (\wtilde z) = \Theta (\pi(\wtilde z))$ we have $\pi(g \acts
  \wtilde z) = g \acts \pi(\wtilde z) \in \Delta \cup W$. Since
  $\wtilde \Theta (\wtilde z)$ is connected, this shows the claim.
\end{proof}

\begin{cor}\label{cor:vectorfieldslift}
  Let $(Z, \Delta)$ be a logarithmic pair, $\pi: (\wtilde Z, \wtilde
  \Delta)\to(Z,\Delta)$ a functorial log resolution and $W$ a
  subvariety of $Z$ that is invariant under any local automorphism of
  $(Z, \Delta)$. Set
  $$
  \wtilde \Delta_W := \text{largest reduced divisor contained in }
  \pi^{-1}(\Delta \cup W).
  $$
  Then $\pi_*\sT_{\wtilde Z}(- \log \wtilde \Delta_W)$ is reflexive.
\end{cor}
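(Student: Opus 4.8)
The plan is to prove reflexivity by showing that any section of $\sT_{\wtilde Z}(-\log \wtilde \Delta_W)$ defined away from a codimension-two set extends over that set. Since reflexivity is a local property and since the sheaf in question is locally free on $\wtilde Z$, it suffices to check that the push-forward $\pi_*\sT_{\wtilde Z}(-\log \wtilde \Delta_W)$ agrees with its double dual. Equivalently, working over an affine open $U \subseteq Z$, I would take a logarithmic vector field $\xi \in H^0\bigl(U \setminus (Z,\Delta)_{\sing},\, \sT_Z(-\log \Delta)\bigr)$ defined away from the singular locus, and show that its pull-back $\pi^*(\xi)$ extends as a \emph{logarithmic} vector field over the exceptional set, i.e.\ that $\pi^*(\xi) \in H^0\bigl(\pi^{-1}(U),\, \sT_{\wtilde Z}(-\log \wtilde \Delta_W)\bigr)$.

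The key mechanism is to convert the vector field $\xi$ into a local group action and transport it upstairs using the functoriality of the resolution. First I would use the classical result of Kaup (the \textbf{Fact} on vector fields and local group actions recalled above) to associate to $\xi$ a local $\mathbb{C}$-action $\varPhi = \varPhi_\xi$ on $Z$; because $\xi$ is a logarithmic vector field, this action stabilizes $\Delta$, and because $W$ is invariant under every local automorphism of $(Z,\Delta)$, it is in particular $\varPhi_\xi$-invariant. Next I would invoke Proposition~\ref{prop:localactionslift}: since $\pi$ is a functorial log resolution, the action $\varPhi_\xi$ lifts to a local $G$-action $\wtilde\varPhi$ on $\wtilde Z$ that commutes with $\pi$, and moreover $\wtilde\varPhi$ stabilizes precisely the divisor $\wtilde \Delta_W = $ largest reduced divisor contained in $\pi^{-1}(\Delta \cup W)$. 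Differentiating $\wtilde\varPhi$ at the identity produces a logarithmic vector field $\wtilde\xi \in H^0\bigl(\pi^{-1}(U),\, \sT_{\wtilde Z}(-\log \wtilde \Delta_W)\bigr)$, using the remark that $\mathbb{C}$-actions stabilizing a boundary correspond exactly to sections of $\sT(-\log \Delta)$.

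It then remains to identify this lifted field $\wtilde\xi$ with the pull-back $\pi^*(\xi)$. This is where the commuting square in Proposition~\ref{prop:localactionslift} does the work: since $\wtilde\varPhi$ covers $\varPhi$ via $\pi$, the induced vector fields agree wherever $\pi$ is an isomorphism, namely on the complement of the exceptional set $\pi^{-1}(U) \setminus \Exc(\pi)$. Because $\sT_{\wtilde Z}(-\log \wtilde \Delta_W)$ is locally free (hence torsion-free) and $\Exc(\pi)$ has codimension one but $\wtilde\xi$ and $\pi^*(\xi)$ are two sections agreeing on a dense open set, they coincide as rational sections; the point of the construction is that $\wtilde\xi$ is genuinely regular and logarithmic along the whole exceptional divisor, so $\pi^*(\xi)$ extends as desired. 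Thus every section of the push-forward defined off the singular locus extends, which is exactly the statement that $\pi_*\sT_{\wtilde Z}(-\log \wtilde \Delta_W)$ is reflexive.

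I expect the main obstacle to be the passage from the analytic-local $G$-action picture, in which Kaup's theorem and the lifting proposition are naturally phrased, back to a statement about global algebraic sections and reflexivity of a coherent sheaf. One must be careful that the local $\mathbb{C}$-actions produced near different points glue to give a well-defined extension of the single vector field $\xi$, rather than merely a collection of unrelated local liftings; this is handled by the uniqueness clause in Kaup's theorem together with the functoriality of $\mathcal{R}$, which guarantees that the lift is canonical and therefore independent of the chosen neighborhood. A secondary subtlety is verifying that $\wtilde\Delta_W$ is exactly the divisor stabilized by $\wtilde\varPhi$, so that the logarithmic pole order along each exceptional component is neither over- nor under-counted; this is supplied directly by the last assertion of Proposition~\ref{prop:localactionslift}.
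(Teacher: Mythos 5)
Your overall strategy --- integrate the vector field into a local $\C$-action via Kaup's theorem, lift that action through the functorial resolution using Proposition~\ref{prop:localactionslift}, differentiate the lifted action to obtain $\wtilde\xi \in H^0\bigl(\pi^{-1}(U),\, \sT_{\wtilde Z}(-\log \wtilde\Delta_W)\bigr)$, and identify $\wtilde\xi$ with $\pi^*(\xi)$ by torsion-freeness since the two agree off $\Exc(\pi)$ --- is exactly the paper's. But there is a genuine gap at the very first step: you apply Kaup's theorem to $\xi$, which at that point is defined only on $U \setminus (Z,\Delta)_{\sing}$, and claim it produces a local $\C$-action ``on $Z$''. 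Kaup's theorem only yields an action on the space where the field is defined, so what you actually get is an action on $U \setminus (Z,\Delta)_{\sing}$. Lifting that through the functorial resolution is vacuous: a log resolution is an isomorphism over $(Z,\Delta)_{\reg}$, so the lift over $\pi^{-1}\bigl(U \setminus (Z,\Delta)_{\sing}\bigr)$ says nothing about the exceptional divisors, all of which lie over $(Z,\Delta)_{\sing}$. Note also that the second half of Proposition~\ref{prop:localactionslift}, which you invoke for the stabilization of $\wtilde\Delta_W$, explicitly requires $\xi \in H^0\bigl(Z,\, \sT_Z(-\log\Delta)\bigr)$, i.e., a field defined everywhere on the base.

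The missing step --- and it is the first substantive move in the paper's proof --- is to extend $\xi$ across $(Z,\Delta)_{\sing}$ \emph{downstairs} before integrating: since $\sT_Z(-\log\Delta) = \Omega^1_Z(\log\Delta)^*$ is the dual of a coherent sheaf, it is reflexive, so $\xi$ extends to an element of $H^0\bigl(U,\, \sT_Z(-\log\Delta)\bigr)$. Only then does Kaup's theorem give a local $\C$-action on all of $U$, which Proposition~\ref{prop:localactionslift} lifts to an action on $\pi^{-1}(U)$ stabilizing $\wtilde\Delta_W$; differentiating and comparing with $\pi^*(\xi)$ then finishes the proof as you describe. With this one sentence inserted your argument is complete and coincides with the paper's. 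Incidentally, the obstacle you flag at the end --- gluing local actions produced near different points --- does not actually arise: one works with a single field on a single open set $U$, and the entire extension problem is resolved downstairs by reflexivity of the dual sheaf, so no gluing of unrelated local liftings is ever needed.
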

\begin{proof}
  Let $U \subset Z$ be an open subset and let $\xi \in H^0 \bigl( U
  \setminus (Z, \Delta)_{\mathrm{sing}},\, \sT_Z (-\log \Delta)\bigr)$
  be a vector field. Since $\sT_Z(-\log \Delta) = \Omega_Z^1(\log
  \Delta)^*$ is reflexive, $\xi$ extends to a logarithmic vector field
  on $U$, i.e., to an element $\xi \in H^0 \bigl( U,\, \sT_Z(-\log
  \Delta)\bigr)$. Lifting the local $\C$-action $\varPhi_\xi$ that
  corresponds to $\xi$ with the help of
  Proposition~\ref{prop:localactionslift}, we obtain a a local
  $\C$-action on $\pi^{-1}(U)$ that stabilizes $\wtilde \Delta_W$. The
  corresponding vector field $\wtilde \xi \in H^0 \bigl(\pi^{-1}(U),\,
  \sT_{\wtilde Z}(-\log \wtilde \Delta_W)\bigr)$ is an extension of
  $\xi$ considered as an element of $H^0 \bigl(\pi^{-1}(U\setminus (Z,
  \Delta)_{\mathrm{sing}}),\, \sT_{\wtilde Z}(-\log \wtilde
  \Delta_W)\bigr)$.
\end{proof}

\part{EXTENSION THEOREMS FOR LOG CANONICAL PAIRS}

\section{Proof of Theorem~\ref*{thm:extension-lc} for $n$-forms}
\label{sec:ext-n}

In this section, we consider the extension problem for logarithmic
$n$-forms. The proof of the case $p=n$ of Theorem~\ref{thm:extension-lc}
immediately follows from the following, slightly stronger result. The
\emph{discrepancy} of an exceptional divisor has been introduced in
Definition~\ref{defn:discrep} above.

\begin{prop}\label{prop:nforms}
  Let $(Z, \Delta)$ be an $n$-dimensional log canonical logarithmic
  pair. Let $\pi: (\wtilde Z, \wtilde \Delta) \to (Z, \Delta)$ be a
  log-resolution and $E_{\lc} \subset \wtilde Z$ the union of all
  $\pi$-exceptional prime divisors $E \not \subseteq \wtilde \Delta$
  with discrepancy $a(E,Z, \Delta) = -1$, endowed with the structure
  of a reduced subscheme of $\wtilde Z$. Then the sheaf $\pi_*
  \Omega^n_{\wtilde Z} \bigl( \log (\wtilde \Delta + E_{\mathrm{lc}})
  \bigr)$ is reflexive.
\end{prop}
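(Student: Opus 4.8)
The plan is to identify the push-forward sheaf explicitly as the reflexive rank-one divisorial sheaf $\sO_Z(K_Z+\Delta)$; since the latter is automatically reflexive on the normal variety $Z$, this proves the proposition. First I would record that $\wtilde\Delta + E_{\lc}$ is a reduced divisor with simple normal crossing support on the smooth variety $\wtilde Z$ (its components being among those of $\supp(\wtilde\Delta\cup\Exc(\pi))$), so that the standard local description of logarithmic top-forms gives the identification $\Omega^n_{\wtilde Z}\bigl(\log(\wtilde\Delta+E_{\lc})\bigr) \cong \sO_{\wtilde Z}\bigl(K_{\wtilde Z}+\wtilde\Delta+E_{\lc}\bigr)$. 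The task then reduces to computing $\pi_*\sO_{\wtilde Z}\bigl(K_{\wtilde Z}+\wtilde\Delta+E_{\lc}\bigr)$.

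Next I would carry out the discrepancy computation. Writing $\wtilde\Delta = \wtilde\Delta' + \sum_{E\subseteq\wtilde\Delta} E$ as the strict transform plus the exceptional components mapping into $\Delta$, and invoking the discrepancy formula of Definition~\ref{defn:discrep}, one obtains
$$
K_{\wtilde Z}+\wtilde\Delta+E_{\lc} \;=\; \pi^*(K_Z+\Delta) + F,
$$
where $F$ is a $\pi$-exceptional $\Q$-divisor whose coefficient along an exceptional prime $E$ equals $a(E,Z,\Delta)+1$ if $E\subseteq\wtilde\Delta$, equals $0$ if $E\not\subseteq\wtilde\Delta$ and $a(E,Z,\Delta)=-1$, and equals $a(E,Z,\Delta)$ otherwise. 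The crucial point is that, because $(Z,\Delta)$ is log canonical, every discrepancy satisfies $a(E,Z,\Delta)\geq -1$, so every coefficient of $F$ is strictly greater than $-1$. This is precisely where the definition of $E_{\lc}$ enters: adding back exactly the exceptional divisors of discrepancy $-1$ raises their coefficient from $-1$ to $0$, so they no longer violate the bound, while the divisors already contained in $\wtilde\Delta$ are raised to $a+1\geq 0$.

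Finally I would compare sections, viewing both $\pi_*\sO_{\wtilde Z}(K_{\wtilde Z}+\wtilde\Delta+E_{\lc})$ and $\sO_Z(K_Z+\Delta)$ as subsheaves of the constant sheaf of rational $n$-forms on $Z$. Writing $\omega = f\cdot\omega_0$ for a fixed reference form and using that $\operatorname{div}(f\circ\pi)=\pi^*\operatorname{div}(f)$, the discrepancy formula yields $(\pi^*\omega) + \wtilde\Delta' = \pi^*B + \sum_E a(E,Z,\Delta)\,E$, where $(\omega)$ denotes the divisor of a rational $n$-form and $B:=(\omega)+\Delta$. Thus a rational $n$-form $\omega$ lies in the push-forward over $U$ iff $N:=(\pi^*\omega)+\wtilde\Delta+E_{\lc}\geq 0$ on $\pi^{-1}(U)$, and in $\sO_Z(K_Z+\Delta)$ iff $B\geq 0$ on $U$. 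The non-exceptional part of $N$ is the strict transform of $B$, which gives the inclusion ``$\subseteq$'' at once. For the reverse inclusion, assuming $B\geq 0$ one computes the coefficient of $N$ along each exceptional prime $E$ as $\mult_E(\pi^*B)$ plus the coefficient of $F$ along $E$; the first summand is $\geq 0$ since $\pi^*B$ is effective, and the second exceeds $-1$ by the previous step, so the coefficient is $>-1$. As $N$ is an \emph{integral} divisor, this coefficient is an integer greater than $-1$, hence $\geq 0$, so $N\geq 0$. This establishes equality $\pi_*\sO_{\wtilde Z}(K_{\wtilde Z}+\wtilde\Delta+E_{\lc}) = \sO_Z(K_Z+\Delta)$, which is reflexive.

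I expect the main obstacle to be the bookkeeping of exactly which exceptional components lie in $\wtilde\Delta$, in $E_{\lc}$, or in neither, combined with the rounding step: the genuine content is that integrality of $K_{\wtilde Z}+\wtilde\Delta+E_{\lc}$ together with the log-canonical bound $a(E,Z,\Delta)\geq -1$ converts a possibly fractional and possibly negative coefficient into a non-negative integer. Everything else is the standard fact $\pi_*\sO_{\wtilde Z}=\sO_Z$ repackaged through this divisor computation, and I would expect no vanishing theorem to be needed for the case $p=n$.
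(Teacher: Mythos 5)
Your proof is correct, but it takes a genuinely different route from the paper's. The paper first reduces to the case where $K_Z+\Delta$ is Cartier, invoking the index-one-cover argument of Corollary~\ref{cor:c2} together with the discrepancy calculation of \cite[Prop.~5.20]{KM98}; only then does it run the discrepancy computation, where Cartier-ness forces the discrepancies to be integers, so that every exceptional divisor not in $\wtilde\Delta\cup E_{\lc}$ automatically has $a(E,Z,\Delta)\geq 0$ and one gets $\pi^*(K_Z+\Delta)=K_{\wtilde Z}+\wtilde\Delta+E_{\lc}-\sum b_jE_j$ with $b_j\geq 0$ integral. You dispense with the covering trick entirely: you keep the possibly fractional discrepancies and instead observe that the divisor $N=(\pi^*\omega)+\wtilde\Delta+E_{\lc}$ attached to an honest rational $n$-form is always \emph{integral} (divisor of a form plus reduced divisors), so the log canonical bound, which only gives exceptional coefficients $>-1$, rounds up to $\geq 0$. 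In other words, the integrality that the paper manufactures via the index-one cover you get for free from working with actual forms rather than with $\pi^*(K_Z+\Delta)$ as a $\Q$-divisor. This makes your argument more self-contained (no finite covering trick, no bookkeeping of how the non-klt locus behaves under the cover) and it yields the slightly sharper conclusion $\pi_*\sO_{\wtilde Z}(K_{\wtilde Z}+\wtilde\Delta+E_{\lc})\cong\sO_Z(K_Z+\Delta)$, identifying the push-forward rather than only proving it reflexive; what the paper's route buys in exchange is a reduction step (Corollary~\ref{cor:c2} and Proposition~\ref{prop:finitecoveringtrick}) that it needs anyway for the cases $p=n-1$ and $p=1$, where no such divisorial shortcut exists. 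One small point to make explicit in your write-up: pulling back $B=(\omega)+\Delta$ requires it to be $\Q$-Cartier, which holds precisely because the definition of a log canonical pair already presupposes that $K_Z+\Delta$ is $\Q$-Cartier.
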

\begin{proof}
  After shrinking $Z$ if necessary, it suffices to show that the
  pull-back of any $n$-form $\sigma \in H^0 \bigl(Z, \Omega^{[n]}_{Z}
  (\log \Delta) \bigr)$ extends to an element of $H^0\bigl( \wtilde Z,
  \Omega_{\wtilde Z}^n (\log (\wtilde \Delta + E_{\mathrm{lc}}))
  \bigr)$. Using the argument in the proof of Corollary~\ref{cor:c2}
  and the discrepancy calculation in the proof of \cite[Prop.\
  5.20]{KM98}, we see that it is sufficient to prove the claim under
  the additional assumption that $K_Z + \Delta$ is Cartier.

  First, we renumber the exceptional prime divisors $E_1, \dots, E_m$
  of $\pi$ in such a way that
  \begin{enumerate-p}
  \item $\pi (E_j) \subset \Delta$ iff $j = 1, \dots, k$,
  \item $a(E_j, Z, \Delta) \geq 0$ for $j =  k+1, \dots, l$,
  \item $E_{\mathrm{lc}} = \bigcup_{j = l+1}^m E_j$.
  \end{enumerate-p} 
  Using the assumption that $a(E_j, Z, \Delta) \geq -1$ for all $j$,
  we obtain that
  \begin{equation}\label{eq:deltatilde}
    K_{\wtilde Z} + \pi_*^{-1}(\Delta) - \sum_{j=1}^k a(E_j, Z,
    \Delta) E_j = K_{\wtilde Z} + \wtilde{\Delta} - 
    \sum_{j=1}^k c_j E_j
  \end{equation}
  for some $c_j \geq 0$.  From~\eqref{eq:deltatilde} and the
  definition of discrepancy we conclude that
  \begin{equation}\label{eq:nformsdiscrepancy}
    \pi^* (K_Z + \Delta) = K_{\wtilde Z} + \wtilde \Delta +
    E_{\mathrm{lc}} - \sum_{j=1}^l b_j E_j, 
  \end{equation}
  for some $b_j \geq 0$ ---note that the $b_j$ are integral because
  $K_Z+\Delta$ is Cartier. Equation~\eqref{eq:nformsdiscrepancy} then
  implies that any $n$-form $\sigma \in H^0 \bigl(Z, \Omega^{[n]}_{Z}
  (\log \Delta) \bigr) = H^0 \bigl(Z, \sO_Z(K_Z +\Delta) \bigr) $ extends to
  an element of $H^0\bigl( \wtilde Z, \Omega_{\wtilde Z}^n (\log
  (\wtilde \Delta + E_{\mathrm{lc}}) )\bigr)$.
\end{proof}

\begin{rem}\label{rem:lcnecessary:n}
  It follows from the proof of Proposition~\ref{prop:nforms} that the
  assumption ``log canonical'' is indeed necessary for the case
  $p = n$ of the Extension Theorem~\ref{thm:extension-lc}.
\end{rem}

\section{Proof of Theorem~\ref*{thm:extension-lc} for $(n-1)$-forms}
\label{sec:ext-n-1}

In this section, we consider the case $p=n-1$ of
Theorem~\ref{thm:extension-lc}. We recall the statement.

\begin{prop}\label{prop:n-1forms}
  Let $(Z, \Delta)$ be a log canonical logarithmic pair of dimension
  $n$.  Let $\pi: (\wtilde Z, \wtilde \Delta) \to (Z, \Delta)$ be a
  log resolution and set
  $$
  \wtilde \Delta_{\lc} := \text{largest reduced divisor contained in }
  \pi^{-1}\bigl(\Delta \cup \text{non-klt locus of }(Z, \Delta)\bigr).
  $$
  Then $\pi_* \Omega^{n-1}_{\wtilde Z}
  (\log\wtilde{\Delta}_{\mathrm{lc}})$ is reflexive.
\end{prop}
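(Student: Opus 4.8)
The plan is to deduce the case of $(n-1)$-forms from the two extension results already at our disposal: the one for $n$-forms, Proposition~\ref{prop:nforms}, and the one for logarithmic vector fields, Corollary~\ref{cor:vectorfieldslift}. The bridge between them is the standard duality $\Omega^{n-1}_{\wtilde Z}(\log D) \cong \sT_{\wtilde Z}(-\log D)\otimes \Omega^n_{\wtilde Z}(\log D)$ on the smooth space $\wtilde Z$, realised concretely by interior multiplication (contraction) of a logarithmic vector field into a logarithmic $n$-form.

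First I would carry out two reductions. By Lemma~\ref{lem:oneresolutionsuffices}, applied with $\Top = \bigwedge^{[n-1]}$ and $W$ the non-klt locus, it suffices to establish reflexivity for a single, conveniently chosen log resolution; I would take $\pi$ to be functorial, so that Corollary~\ref{cor:vectorfieldslift} becomes applicable. By Corollary~\ref{cor:c2} I may moreover assume that $K_Z+\Delta$ is Cartier. Shrinking $Z$ in the Zariski topology, I can then trivialise the line bundle $\sO_Z(K_Z+\Delta)=\Omega^{[n]}_Z(\log\Delta)$ and fix a generating section, that is, a rational $n$-form $\omega\in H^0\bigl(Z,\,\Omega^{[n]}_Z(\log\Delta)\bigr)$ that generates this sheaf everywhere.

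The core of the argument runs as follows. Reflexivity amounts to showing that for every $\sigma\in H^0\bigl((Z,\Delta)_{\reg},\,\Omega^{n-1}_Z(\log\Delta)\bigr)$ the pull-back $\pi^*(\sigma)$ lies in $H^0\bigl(\wtilde Z,\,\Omega^{n-1}_{\wtilde Z}(\log\wtilde\Delta_{\lc})\bigr)$. Over the snc locus $(Z,\Delta)_{\reg}$ the form $\omega$ is a nowhere-vanishing logarithmic $n$-form, so contraction $\xi\mapsto\iota_\xi\omega$ is an isomorphism $\sT_Z(-\log\Delta)\xrightarrow{\sim}\Omega^{n-1}_Z(\log\Delta)$ there; hence $\sigma=\iota_\xi\omega$ for a uniquely determined logarithmic vector field $\xi$, which extends to $\xi\in H^0\bigl(Z,\,\sT_Z(-\log\Delta)\bigr)$ because $\sT_Z(-\log\Delta)$ is reflexive. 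The non-klt locus $W$ is intrinsic to the pair $(Z,\Delta)$ and therefore invariant under every local automorphism, so Corollary~\ref{cor:vectorfieldslift} lets me lift $\xi$ to $\wtilde\xi\in H^0\bigl(\wtilde Z,\,\sT_{\wtilde Z}(-\log\wtilde\Delta_{\lc})\bigr)$, with $\wtilde\xi=\pi^*\xi$ over $(Z,\Delta)_{\reg}$. On the other hand, Proposition~\ref{prop:nforms} gives $\pi^*\omega\in H^0\bigl(\wtilde Z,\,\Omega^n_{\wtilde Z}(\log(\wtilde\Delta+E_{\mathrm{lc}}))\bigr)$, and since every prime divisor of $\wtilde\Delta+E_{\mathrm{lc}}$ is contained in $\pi^{-1}(\Delta\cup W)$ one has $\wtilde\Delta+E_{\mathrm{lc}}\subseteq\wtilde\Delta_{\lc}$, so in fact $\pi^*\omega\in H^0\bigl(\wtilde Z,\,\Omega^n_{\wtilde Z}(\log\wtilde\Delta_{\lc})\bigr)$. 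A local coordinate computation shows that contraction is a well-defined pairing $\sT_{\wtilde Z}(-\log D)\otimes\Omega^n_{\wtilde Z}(\log D)\to\Omega^{n-1}_{\wtilde Z}(\log D)$ for any snc divisor $D$; applying it with $D=\wtilde\Delta_{\lc}$ yields
$$
\pi^*(\sigma)=\pi^*(\iota_\xi\omega)=\iota_{\wtilde\xi}(\pi^*\omega)\in H^0\bigl(\wtilde Z,\,\Omega^{n-1}_{\wtilde Z}(\log\wtilde\Delta_{\lc})\bigr),
$$
the middle equality holding because both sides are rational forms agreeing over the dense open set $(Z,\Delta)_{\reg}$. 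This is precisely the desired extension.

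The main obstacle I anticipate is the bookkeeping of logarithmic poles: one must verify simultaneously that the lifted vector field acquires poles only along $\wtilde\Delta_{\lc}$ (which is the content of Corollary~\ref{cor:vectorfieldslift} together with the automorphism-invariance of $W$) and that the pulled-back volume form $\pi^*\omega$ has poles bounded by the same divisor (via Proposition~\ref{prop:nforms} and the inclusion $\wtilde\Delta+E_{\mathrm{lc}}\subseteq\wtilde\Delta_{\lc}$), so that their contraction remains inside $\Omega^{n-1}_{\wtilde Z}(\log\wtilde\Delta_{\lc})$. Everything hinges on reconciling these two independent pole estimates with the single boundary divisor $\wtilde\Delta_{\lc}$.
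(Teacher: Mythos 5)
Your proposal is correct and follows essentially the same route as the paper's proof: the same reductions (Corollary~\ref{cor:c2}, Lemma~\ref{lem:oneresolutionsuffices} to a functorial resolution with $K_Z+\Delta$ trivial), the same dualization of $\sigma$ to a logarithmic vector field, the same lifting via Corollary~\ref{cor:vectorfieldslift}, and the same use of log canonicity to bound the poles of the pulled-back volume form before contracting back. The only cosmetic difference is that you quote Proposition~\ref{prop:nforms} for the pole bound on $\pi^*\omega$, whereas the paper re-runs the discrepancy computation to exhibit $\sO_{\wtilde Z}(K_{\wtilde Z}+\wtilde\Delta_{\lc})\cong\sO_{\wtilde Z}(D)$ with $D$ effective --- these are the same estimate.
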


\begin{proof}
  After shrinking $Z$, it suffices to show that the pull-back
  $\pi^*\sigma$ of any $\sigma \in H^0 \bigl(Z,\,\Omega^{[n-1]}_Z(\log
  \Delta) \bigr)$ extends to an element of $H^0 \bigl(\wtilde
  Z,\,\Omega^{n-1}_{\wtilde Z}(\log \wtilde \Delta_{\mathrm{lc}})
  \bigr)$. By Corollary~\ref{cor:c2}, we may assume that $K_Z +
  \Delta$ is Cartier, and, possibly after a further shrinking of $Z$,
  that $K_Z + \Delta$ is trivial. Finally, due to
  Lemma~\ref{lem:oneresolutionsuffices} we may assume that $\pi:
  (\wtilde Z, \wtilde \Delta) \to (Z, \Delta)$ is a functorial
  log-resolution.

  Since $\Omega^{[1]}_Z(\log \Delta)^* \cong \sT_Z(- \log \Delta)$,
  there exists a unique logarithmic vector field $\eta \in
  H^0\bigl(Z,\, \sT_Z(- \log \Delta)\bigr)$ that corresponds to
  $\sigma$ via the perfect pairing
  $$
  \Omega^{[1]}_Z(\log \Delta) \times \Omega^{[n-1]}_Z(\log \Delta)
  \to \sO _Z(K_Z + \Delta) \cong \sO _Z.
  $$
  Since the non-klt locus is invariant under the local $\C$-action
  $\varPhi_\eta$ of $\eta$, we can lift $\eta$ to a vector field $\wtilde \eta
  \in H^0 \bigl(\wtilde Z,\, \sT_{\wtilde Z}(-\log \wtilde
  \Delta_{\mathrm{lc}})\bigr)$ using Corollary~\ref{cor:vectorfieldslift}.
  The assumption that $(Z,\Delta)$ is log canonical implies, via a discrepancy
  computation similar to \eqref{eq:nformsdiscrepancy} in the proof of
  Proposition~\ref{prop:nforms}, that $\sO _{\wtilde Z}(K_{\wtilde Z} +
  \wtilde \Delta_{\mathrm{lc}}) \cong \sO_{\wtilde Z}(D)$ for some effective
  divisor $D$ on $\wtilde Z$.  Hence, the logarithmic vector field $\wtilde
  \eta$ corresponds to an element $\wtilde \sigma \in H^0 \bigl(\wtilde Z,\,
  \Omega^{n-1}_{\wtilde Z}(\log \wtilde \Delta_{\mathrm{lc}})\otimes
  \sO_{\wtilde Z}(-D)\bigr)$ via the pairing
  $$
  \Omega^1_{\wtilde Z}(\log \wtilde \Delta_{\mathrm{lc}}) \times
  \Omega^{n-1}_{\wtilde Z}(\log \wtilde \Delta_{\mathrm{lc}}) \to \sO
  _{\wtilde Z}(K_{\wtilde Z} + \wtilde \Delta_{\mathrm{lc}}) \cong
  \sO_{\wtilde Z}(D).
  $$
  This yields the desired extension of $\sigma$.
\end{proof}

\begin{rem}
  If $\pi: (\wtilde Z,\wtilde \Delta) \to (Z, \Delta)$ is a log
  resolution of a log canonical surface pair $(Z,\Delta)$, not only
  $\pi_* \Omega_{\wtilde Z}^1(\log \wtilde \Delta_{\mathrm{lc}})$ but
  also $\pi_* \Omega_{\wtilde Z}^1(\log \wtilde \Delta)$ is reflexive,
  i.e., $1$-forms extend over the exceptional set of $\pi$ without
  acquiring further logarithmic poles, see \cite[Lem.~1.3]{Wahl85}.
\end{rem}

We conclude this section with an example showing that the assumption
``log canonical'' in Theorem~\ref{thm:extension-lc} is necessary also
for the cases $p=1$ and $n-1$, cf.\ Remark~\ref{rem:lcnecessary:n}.

\begin{example}
  Let $Z$ be the affine cone over a smooth curve $C$ of degree $4$ in
  $\P^2$.  Let $\wtilde Z$ be the total space of the line bundle
  $\sO_C(-1)$. Then, the contraction of the zero section $E$ of
  $\wtilde Z$ yields a log-resolution $\pi: (\wtilde Z, \emptyset) \to
  (Z, \emptyset)$. An elementary intersection number computation shows
  that the discrepancy of $E$ with respect to $Z$ is equal to $-2$. If
  $Z = \{f=0\}$ for some quartic form $f$ in three variables $z_0,
  z_1, z_2$, the (rational) differential form
  $$
  \tau= \frac{dz_1 \wedge dz_2}{\partial f / \partial z_0}
  $$
  yields a global generator for $\Omega^{[2]}_Z$,
  cf.~\cite[Ex.~1.8]{Reid87}. Let $\bar \tau := \pi^* (\tau) \in
  H^0\bigl(\wtilde Z,\, \Omega^2_{\wtilde Z}(2E) \bigr)$ be the
  associated rational two-form on $\wtilde Z$, and observe that $\bar
  \tau$, seen as a section in $\Omega^2_{\wtilde Z}(2E)$, does not
  vanish along $E$. Finally, let $\xi$ be the vector field induced by
  the canonical $\C^*$-action on $\wtilde Z$. Contracting $\bar \tau$
  by $\xi$ we obtain a regular $1$-form $\sigma = \imath_{\xi}\bar
  \tau$ on $\wtilde Z \setminus E$ that does not extend to an element
  of $H^0 \bigl(\wtilde Z,\, \Omega^1_{\wtilde Z}(\log E) \bigr)$. To
  see this, let $U$ be an open subset of $C$ such that $\sO_C(-1)|_U$
  is trivial, and such that there exists a local coordinate $z$ on
  $U$. If the bundle projection is denoted by $p: \wtilde Z \to C$,
  consider $\wtilde U := p^{-1}(U)\cong U\times \C$. If $w$ is a
  linear fiber coordinate on $\wtilde U$, we have $\wtilde U \cap
  E=\{w=0\}$. In these coordinates, $\bar \tau|_{\wtilde U} =
  \frac{g(z,w)}{w^2}dz \wedge dw $ for some nowhere vanishing $g \in
  \sO_{\wtilde{U}}(\wtilde U)$, and $\xi|_{\wtilde U} = w
  \frac{\partial}{\partial w}$. Hence, in the chosen coordinates we
  have $\sigma|_{\wtilde U} = -\frac{g(z,w)}{w}dz \notin
  H^0\bigl(\wtilde U,\, \Omega_{\wtilde Z}^1(\log E) \bigr)$.
\end{example}

\section{Proof of Theorem~\ref*{thm:extension-lc} for $1$-forms}
\label{sec:ext-1}

The aim of the present section is to prove the Extension
Theorem~\ref{thm:extension-lc} for 1-forms. This is an immediate consequence
of the following stronger proposition.

\begin{prop}\label{prop:extension}
  Let $(Z, \Delta)$ be a reduced log canonical pair. Let $\pi: \wtilde Z \to Z$ be a
  birational morphism such that $\wtilde Z$ is smooth, the $\pi$-exceptional set
  $\Exc(\pi) \subset \wtilde Z$ is of pure codimension one, and
  $\supp(\pi^{-1}(\Delta) \cup \Exc(\pi))$ is a divisor with simple normal crossings.
  Let
  \begin{equation}\label{eq:deflcdiv}
    \wtilde \Delta_{\lc} := \text{largest reduced divisor contained in
    } \pi^{-1}\bigl(\Delta \cup \text{non-klt locus of }(Z,\Delta)\bigr).
  \end{equation}
  Then the sheaf $\pi_* \Omega^1_{\wtilde Z}(\log \wtilde
  \Delta_{\lc})$ is reflexive.
\end{prop}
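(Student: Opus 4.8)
The plan is to reduce the statement to the extension theorem for $1$-forms on pairs whose log canonical divisor is Cartier, and then invoke genuinely Hodge-theoretic input. First I would apply the reduction machinery already assembled in the paper: by Corollary~\ref{cor:c2} it suffices to prove the statement under the additional hypothesis that $K_Z + \Delta$ is Cartier, and by Lemma~\ref{lem:oneresolutionsuffices} it suffices to verify reflexivity for one convenient log resolution rather than for the arbitrary birational model $\pi$ appearing in the statement. After shrinking $Z$ in the Zariski topology I may assume $U = Z$ and reduce to showing that for any $\sigma \in H^0\bigl(Z,\,\Omega^{[1]}_Z(\log \Delta)\bigr)$, the pull-back $\pi^*(\sigma)$ has at worst logarithmic poles along the components of $\wtilde \Delta_{\lc}$ and no poles along the remaining exceptional divisors.

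Since reflexivity of $\pi_*\Omega^1_{\wtilde Z}(\log \wtilde \Delta_{\lc})$ is equivalent to the surjectivity of the restriction morphism of Remark~\ref{rem:reflexiveextension}, and since a section extends automatically in codimension one away from $\Exc(\pi)$, the problem is local near a general point of each exceptional prime divisor $E' \subset \Exc(\pi)$. For divisors $E'$ whose image lies in $\Delta$ or in the non-klt locus, $E'$ is a component of $\wtilde \Delta_{\lc}$ and logarithmic poles are permitted, so nothing is to be shown there. The real content is at exceptional divisors $E'$ with $E' \not\subseteq \wtilde \Delta_{\lc}$, i.e.\ those of discrepancy strictly greater than $-1$ lying over the klt locus: here I must show that $\pi^*(\sigma)$ extends \emph{without} any pole along $E'$.

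The hard part, and the step where the Hodge-theoretic machinery enters, is precisely this no-pole statement at a divisor over a klt point. As the outline in Section~\ref{sec:mainresults} indicates, the intended route is to use results of Steenbrink and Namikawa: the extension of holomorphic $1$-forms across resolutions of the relevant local (klt, hence rational) singularities is governed by the Hodge theory of the exceptional fibre and the vanishing of the appropriate piece of local cohomology. Concretely, I would compare $\pi^*(\sigma)$ to a local model near a general point of $E'$, where, because the singularity is klt and $K_Z+\Delta$ is Cartier, one controls the pole order of $\pi^*\sigma$ along $E'$ by a discrepancy computation analogous to \eqref{eq:nformsdiscrepancy}; the surviving gap between ``at worst logarithmic pole'' and ``no pole'' is then closed by the Hodge-theoretic vanishing, which forbids $1$-forms from acquiring genuine logarithmic poles along divisors over the log terminal locus.

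The main obstacle I anticipate is that, unlike the $n$-form and $(n-1)$-form cases, there is no purely algebraic duality or vector-field lifting argument available for $p=1$: the statement genuinely requires a vanishing theorem of Hodge type for the local cohomology of the exceptional set (the inputs of Steenbrink and Namikawa), and the delicate point is verifying that the hypotheses of those theorems are met in the log canonical setting after the Cartier reduction. I therefore expect the bulk of the work to consist of setting up the local situation so that these external results apply, and in carefully distinguishing components of $\Exc(\pi)$ according to whether they lie over the klt or the non-klt locus.
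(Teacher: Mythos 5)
Your proposal correctly identifies the external Hodge-theoretic inputs (Steenbrink, Namikawa) and some legitimate reductions (Corollary~\ref{cor:c2}, Lemma~\ref{lem:oneresolutionsuffices}, localization at general points of exceptional divisors), but it contains two genuine gaps. First, the claim that for components $E' \subseteq \wtilde \Delta_{\lc}$ ``nothing is to be shown'' is false: a priori $\pi^*(\sigma)$ is only a section of $\Omega^1_{\wtilde Z}(\log \wtilde \Delta_{\lc})(*\Exc(\pi))$, with poles of \emph{arbitrary} order along every exceptional component, and bounding the pole along a component of $\wtilde \Delta_{\lc}$ to be at worst logarithmic is a substantial part of the theorem --- in the paper this is exactly what Steenbrink's vanishing plus formal duality accomplish (Step~1 of Proposition~\ref{prop:isolatedextension}), and it is only established there for fibers over isolated points. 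Second, the proposed mechanism for the no-pole statement --- ``control the pole order of $\pi^*\sigma$ along $E'$ by a discrepancy computation analogous to \eqref{eq:nformsdiscrepancy}'' --- does not exist: discrepancies control pole orders of $n$-forms, since $\Omega^n_{\wtilde Z} \cong \sO_{\wtilde Z}(K_{\wtilde Z})$, but there is no analogous identity tying a $1$-form to the canonical class, and the paper never argues this way. Indeed the example following Proposition~\ref{prop:n-1forms} (the cone over a quartic curve) shows that the failure mode for $1$-forms is a pole of the ``wrong type'' ($dz/w$ rather than $dw/w$), which no discrepancy bound by itself rules out.

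The deeper structural omission is that your proposal has no mechanism at all for exceptional divisors whose image in $Z$ has positive dimension. Steenbrink's vanishing (via formal duality, giving $H^1_{F_z} = 0$) and Namikawa's extension lemma apply to the fiber over a \emph{point}; ``the problem is local near a general point of $E'$'' does not reduce to that situation, because a neighborhood of a general point of a positive-dimensional center is not a contraction to a point. The paper's actual proof supplies this missing mechanism: a lexicographic induction on $\bigl(\dim Z, \codim \pi(E')\bigr)$, where general hyperplane sections (Lemma~\ref{lem:mayrestrict}, which is precisely why the statement is formulated for admissible morphisms rather than log resolutions) reduce the dimension, the induction hypothesis gives regularity of the restricted form on the slice $\wtilde H$, and then the restriction sequence for logarithmic differentials combined with Mumford-type negativity of exceptional curves (Proposition~\ref{prop:KMMimproved}) forces all residual pole orders $k_i$ to vanish; Proposition~\ref{prop:isolatedextension} serves only as the anchor of this induction. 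Without the hyperplane-cut induction and the negativity argument, the Hodge-theoretic results you cite cannot be brought to bear on the general case, so the proof as outlined does not close.
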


\begin{subrem}
  Observe that the morphism $\pi$ in Proposition~\ref{prop:extension}
  need not be a log resolution in the sense of
  Definition~\ref{def:everythinglog2b} as we do not assume that $\pi$
  is isomorphic over the set where $(Z, \Delta)$ is snc. The setup of
  Proposition~\ref{prop:extension} has the advantage that it behaves
  well under hyperplane sections. This makes it easier to proceed by
  induction.
\end{subrem}

We will prove Proposition~\ref{prop:extension} in the remainder of the
present chapter. Since the proof is somewhat involved, we chose to
present it as a sequence of clearly marked and relatively independent
steps.

\subsection{Proof of Proposition~\ref*{prop:extension}: Setup of notation}

For notational convenience, we call a birational morphism admissible
if it satisfies the assumptions made in
Proposition~\ref{prop:extension}.

\begin{notation}[Admissible morphism]
  Throughout this section, if $(X, D)$ is a logarithmic pair, we call
  a birational morphism $\eta: \wtilde X \to X$ \emph{admissible} if
  $\wtilde X$ is smooth, the $\eta$-exceptional set $\Exc(\eta)$ is of
  pure codimension one, and
  $$
  \supp\bigl( \eta^{-1}(D) \cup \Exc(\eta) \bigr)
  $$
  has simple normal crossings.
\end{notation}

\begin{notation}
  In the setup of Proposition~\ref{prop:extension}, we denote the
  irreducible components of $\Exc(\pi)$ by $E_i \subset \wtilde Z$.
  Further, let $T \subset X$ denote the set of fundamental points of
  $\pi^{-1}$. For $x \in T$, let $F_x := \pi^{-1}(x)$ be the
  associated fiber and $F_{x,i} := F_x \cap E_i$ the obvious
  decomposition.
\end{notation}

\subsection{Proof of Proposition~\ref*{prop:extension}: Technical preparations}

To prove Proposition~\ref{prop:extension}, we argue using repeated
hyperplane sections of $Z$. We show that the induced resolutions of
general hyperplanes are again admissible.

\begin{lem}\label{lem:mayrestrict}
  In the setup of Proposition~\ref{prop:extension}, assume that $\dim
  Z > 1$ and let $H \subset Z$ be a general hyperplane section.
  \begin{enumerate-p}
  \item\ilabel{il:hyper1} If $\Delta_H := \supp(H \cap \Delta)$, then
    the pair $(H, \Delta_H)$ is again log canonical.
  \item\ilabel{il:hyper2} If $\wtilde H := \pi^{-1}(H)$, then the
    restricted morphism $\pi|_{\wtilde H}: \wtilde H \to H$ is
    admissible.
  \item\ilabel{il:hyper3} If $\wtilde \Delta_{\wtilde H, \lc}$ is the
    largest reduced divisor contained in
    $$
    \pi^{-1}\bigl( \Delta_H \cup \text{non-klt locus of }(H, \Delta_H) \bigr),
    $$
    then $\wtilde \Delta_{\wtilde H, \lc} \subset \wtilde \Delta_{\lc}
    \cap \wtilde H$.
  \end{enumerate-p}
\end{lem}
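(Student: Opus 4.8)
**The plan is to verify each of the three assertions by applying general position arguments and standard facts about how log canonicity, resolutions, and non-klt loci behave under restriction to a general hyperplane section.**

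First I would establish \iref{il:hyper1}. The pair $(H, \Delta_H)$ being log canonical for general $H$ is an instance of Bertini-type theorems in the minimal model program: by the adjunction formula $(K_Z + \Delta)|_H = K_H + \Delta_H$ (valid since $H$ avoids the generic points of $\Delta$ and the singular locus of $Z$ by generality, and $\Delta_H = \Delta \cap H$ with reduced structure), and the discrepancies of $(H, \Delta_H)$ can be computed from those of $(Z, \Delta)$ via restriction of a log resolution. Concretely, for general $H$ the preimage $\wtilde H = \pi^{-1}(H)$ meets the exceptional divisors $E_i$ transversally, so the coefficients $a(E_i \cap \wtilde H, H, \Delta_H)$ equal $a(E_i, Z, \Delta) \geq -1$. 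Thus $(H, \Delta_H)$ is again log canonical. I would cite the relevant statement in \cite[Sect.~5]{KM98} for this.

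Next, for \iref{il:hyper2}, I would show $\pi|_{\wtilde H}: \wtilde H \to H$ is admissible by invoking a Bertini theorem twice. Since $\wtilde Z$ is smooth and $\supp(\pi^{-1}(\Delta) \cup \Exc(\pi))$ has simple normal crossings, a general member of the base-point-free linear system $|\pi^* H|$ (equivalently, the preimage of a general hyperplane $H$) is smooth and meets this snc divisor transversally; hence $\wtilde H$ is smooth and $\supp\bigl((\pi|_{\wtilde H})^{-1}(\Delta_H) \cup \Exc(\pi|_{\wtilde H})\bigr)$ is snc on $\wtilde H$. That the exceptional locus $\Exc(\pi|_{\wtilde H})$ is of pure codimension one follows because a general $H$ meets every component $E_i$ (whose image has positive-dimensional fiber over the fundamental locus $T$) in a divisor of $\wtilde H$, and conversely $\pi|_{\wtilde H}$ is isomorphic away from these slices.

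Finally, for \iref{il:hyper3}, the key point is to compare the non-klt locus of $(H, \Delta_H)$ with the restriction of the non-klt locus of $(Z, \Delta)$. The natural expectation is that, for general $H$, the non-klt locus of $(H, \Delta_H)$ is contained in $H \cap (\text{non-klt locus of } (Z,\Delta))$: a point where $(H, \Delta_H)$ fails to be klt forces, via the discrepancy comparison from \iref{il:hyper1}, a divisor over the corresponding point of $Z$ with discrepancy $-1$, so that point lies in the non-klt locus of $(Z, \Delta)$. Combining this containment with the transversality from \iref{il:hyper2}, taking preimages under $\pi$ and intersecting with $\wtilde H$ yields $\wtilde \Delta_{\wtilde H, \lc} \subset \wtilde \Delta_{\lc} \cap \wtilde H$. \textbf{The main obstacle} I anticipate is making the non-klt locus comparison in \iref{il:hyper3} precise: one must ensure that the general hyperplane does not create spurious non-klt points on $H$ that do not come from the non-klt locus of $Z$, and that the reduced divisor structures match up correctly after pulling back — this requires carefully tracking discrepancies of divisors over $H$ versus those over $Z$ under the slicing, rather than merely asserting the set-theoretic containment.
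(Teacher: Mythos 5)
Your route is essentially the paper's: Bertini plus the behavior of discrepancies under general hyperplane sections, which the paper handles by citing Seidenberg's theorem for normality of $H$, \cite[Lem.~5.17]{KM98} for assertions (1) and (3), and Bertini together with Zariski's Main Theorem for (2). However, there are concrete loose ends. The most basic one: you never establish that $H$ is \emph{normal}, which is needed for $(H,\Delta_H)$ to be a log canonical pair at all (and later for Zariski's Main Theorem applied to $\pi|_{\wtilde H}$); the paper gets this from Seidenberg's theorem, \cite[Thm.~1.7.1]{BS95}. Worse, your stated justification --- that a general $H$ ``avoids the singular locus of $Z$'' --- is false whenever $\dim \Sing(Z) \geq 1$, since a general hyperplane meets every positive-dimensional closed subset. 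Relatedly, the adjunction formula should read $(K_Z+\Delta+H)|_H = K_H + \Delta_H$; the missing $H|_H$ term happens to be harmless for the discrepancy comparison, because $\pi^*H = \wtilde H$ contains no exceptional divisor, but as written the formula is incorrect.

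The ``main obstacle'' you flag in (3) --- spurious divisors over $H$ that do not arise by restricting divisors over $Z$ --- is real, but it has a standard resolution which you do not invoke and which closes both (1) and (3): klt-ness and lc-ness are detected on any \emph{single} log resolution. Once Bertini gives that $\pi|_{\wtilde H}$ is a log resolution of $(H,\Delta_H)$ whose relevant divisors are exactly the $E_i \cap \wtilde H$ and the restricted strict transform of $\Delta$, with $a(E_i \cap \wtilde H, H, \Delta_H) = a(E_i, Z, \Delta)$, one reads off lc-ness of $(H,\Delta_H)$ \emph{and} the containment of its non-klt locus in $(\text{non-klt locus of }(Z,\Delta)) \cap H$ from the coefficients on this one resolution; no other divisor over $H$ need be considered. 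This is precisely the content of \cite[Lem.~5.17(1)]{KM98}, which is why the paper can dispatch (1) and (3) in two lines. Finally, in (2) the pure-codimension-one claim requires the \emph{equality} $\Exc(\pi|_{\wtilde H}) = \Exc(\pi) \cap \wtilde H$, whereas your sentence only supports one inclusion. The paper's argument is worth internalizing: since $\wtilde H = \pi^{-1}(H)$ is the full preimage, $\pi|_{\wtilde H}$ and $\pi$ have the same fibers over points of $H$, and Zariski's Main Theorem \cite[V~Thm.~5.2]{Ha77} (using normality of $Z$ and of $H$) identifies each exceptional set with the locus of positive-dimensional fibers, giving the equality.
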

\begin{subrem}
  The inclusion $\wtilde \Delta_{\wtilde H, \lc} \subset \wtilde
  \Delta_{\lc} \cap \wtilde H$ of~\iref{il:hyper3} might be strict.
\end{subrem}
\begin{proof}
  Seidenberg's theorem asserts that $H$ is normal,
  cf.~\cite[Thm.~1.7.1]{BS95}.  Recall from \cite[Lem.~5.17]{KM98}
  that discrepancies do not decrease when taking general hyperplane
  sections. It follows that the pair $(H,\Delta_H)$ is log canonical
  since $(Z, \Delta)$ is. This shows~\iref{il:hyper1}.
  Assertion~\iref{il:hyper3} follows from \cite[Lem.~5.17(1)]{KM98}.

  Since $\wtilde H$ is general in its linear system, Bertini's theorem
  guarantees that $\wtilde H$ is smooth. Zariski's Main Theorem
  \cite[V~Thm.~5.2]{Ha77} now asserts that a point $z \in \wtilde Z$
  is in $\Exc(\pi)$ if and only if the fiber that contains $z$ is
  positive dimensional; the same holds for $\pi|_{\wtilde H}$.  By
  construction, we then have that
  \begin{align}
    \label{eq:hyper4} \Exc(\pi|_{\wtilde H}) & =
    \Exc(\pi) \cap \wtilde H\\
    \label{eq:hyper5} \supp \left( \pi|_{\wtilde H}^{-1}(\Delta_H) \cup
      \Exc(\pi|_{\wtilde H}) \right) & = \supp\bigl( \pi^{-1}(\Delta)
    \cup \Exc(\pi) \bigr) \cap \wtilde H.
  \end{align}
  The left hand side of~\eqref{eq:hyper4} is thus of pure codimension
  one in $\wtilde H$, and another application of Bertini's theorem
  implies that the left hand side of~\eqref{eq:hyper5} is a divisor in
  $\wtilde H$ with simple normal crossings. The admissibility asserted
  in \iref{il:hyper2} is thus shown.
\end{proof}

The following elementary corollary of Mumford's contractibility
criterion, \cite[p.~6]{MR0153682} helps in the discussion of linear
systems of divisors supported on fibers over isolated points.

\begin{prop}\label{prop:KMMimproved}
  Let $\phi: \wtilde Y \to Y$ be a projective birational morphism between
  quasi-projective, normal varieties of dimension $\dim Y > 1$ and assume that
  $\wtilde Y$ is smooth. Let $y \in Y$ be a point whose pre-image $\phi^{-1}(y)$ has
  codimension one\footnote{We do not assume that $\phi^{-1}(y)$ has \emph{pure}
    codimension one.}  and let $F_0, \ldots, F_k \subset \supp\bigl( \phi^{-1}(y)
  \bigr)$ be the reduced divisorial components. If all the $F_i$ are smooth and if
  $\sum k_i F_i$ is a non-trivial, effective linear combination, then there exists a
  number $j$, $0 \leq j \leq k$ such that $k_j \not = 0$ and such that
  \begin{equation}\label{eq:platon}
    h^0\bigl(F_j,\, \sO_{\wtilde Y}(\textstyle{\sum}
    k_i F_i )|_{F_j}\bigr) = 0.
  \end{equation}
\end{prop}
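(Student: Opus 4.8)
The plan is to reduce the statement to the case of a resolution of a normal surface singularity, where it follows from the negative definiteness appearing in Mumford's contractibility criterion \cite{MR0153682}. Write $n := \dim Y$ and $D := \sum_i k_i F_i$. Since $n > 1$, after fixing a projective embedding of $\wtilde Y$ I would choose $n-2$ general hyperplanes $H_1, \dots, H_{n-2}$ and set $\wtilde S := H_1 \cap \dots \cap H_{n-2}$. Repeated application of Bertini's theorem guarantees that $\wtilde S$ is a smooth quasi-projective surface and that each $C_i := F_i \cap \wtilde S$ is a smooth, irreducible, projective curve, with the $C_i$ pairwise distinct; here I use that each $F_i$ is smooth, irreducible, of dimension $n-1 \geq 2$, and projective, being a divisorial component of the fibre $\phi^{-1}(y)$ of the proper morphism $\phi$. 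On $\wtilde S$ one has $F_i|_{\wtilde S} = C_i$, hence $D|_{\wtilde S} = \sum_i k_i C_i =: D_S$, while $C_i \subset F_i$ gives $\sO_{\wtilde Y}(D)|_{C_j} = \bigl( \sO_{\wtilde Y}(D)|_{F_j} \bigr)|_{C_j}$ for every $j$.

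First I would verify that the $C_i$ form a negative-definite configuration. The restriction $\phi|_{\wtilde S}$ maps $\wtilde S$ birationally onto its image $S' := \phi(\wtilde S) \subset Y$ and contracts each $C_i \subset F_i$ to the point $y$. Factoring $\phi|_{\wtilde S}$ through the normalization $\bar S \to S'$, the induced morphism $\wtilde S \to \bar S$ is a proper birational morphism onto a normal surface; since $\bar S \to S'$ is finite it contracts no curve, so every $C_i$ is contracted by $\wtilde S \to \bar S$ to a point lying over $y$. By Mumford's criterion \cite{MR0153682} the curves contracted to any fixed point form a negative-definite configuration, and curves contracted to distinct points are disjoint; hence the intersection matrix $\bigl( C_i \cdot C_j \bigr)_{i,j}$ is negative definite. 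In particular, as $D_S \neq 0$, this forces $D_S \cdot D_S < 0$.

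To conclude, I would argue by contradiction. Assume that $h^0\bigl(F_j,\, \sO_{\wtilde Y}(D)|_{F_j}\bigr) > 0$ for every index $j$ with $k_j \neq 0$, and choose a nonzero section $s_j$ for each such $j$. The zero locus of $s_j$ is a proper closed subset of $F_j$, so for a sufficiently general choice of the $H_\bullet$ the curve $C_j = F_j \cap \wtilde S$ is not contained in it; this is a finite list of open conditions, one for each of the finitely many relevant $j$, and can be met simultaneously. Then $s_j|_{C_j} \neq 0$, so $h^0\bigl(C_j,\, \sO_{\wtilde Y}(D)|_{C_j}\bigr) > 0$, which for the smooth projective curve $C_j$ forces $\deg \sO_{\wtilde Y}(D)|_{C_j} = D_S \cdot C_j \geq 0$. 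Summing, $D_S \cdot D_S = \sum_{j\,:\,k_j \neq 0} k_j\,(D_S \cdot C_j) \geq 0$, contradicting $D_S \cdot D_S < 0$. Hence some $j$ with $k_j \neq 0$ must satisfy \eqref{eq:platon}.

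The main obstacle I anticipate is the reduction step: checking that a single general surface section $\wtilde S$ simultaneously makes $\wtilde S$ smooth, cuts each $F_i$ in a well-behaved curve, and—crucially—allows Mumford's criterion to apply, i.e.\ that passing to the normalization of $\phi(\wtilde S)$ genuinely realizes the $C_i$ as part of the exceptional locus of a resolution of a normal surface point. Once the negative-definite intersection matrix is available, the translation between sections of $\sO_{\wtilde Y}(D)|_{F_j}$ and degrees on the curves $C_j$, together with the final convexity contradiction, is routine.
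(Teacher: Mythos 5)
Your proof is correct, and it follows the paper's reduction exactly up to the decisive moment: like the paper, you cut with $\dim Y - 2$ general hyperplanes to obtain a smooth surface, realize the curves $C_i = F_i \cap \wtilde S$ as exceptional curves of a birational contraction to a normal surface (the paper uses the Stein factorization of $\phi|_{\wtilde H}$ where you use the normalization of the image---these are interchangeable here), and invoke Mumford's negative definiteness \cite{MR0153682}. The endgame, however, is genuinely different. The paper cites \cite[Lem.~5-1-7]{KMM87} to produce a single index $j$ with $C_j \cdot \bigl(\sum k_i F_i|_{\wtilde H}\bigr) < 0$, and then must still argue that $h^0\bigl(F_j, \sO_{\wtilde Y}(\sum k_iF_i)|_{F_j}\bigr)=0$: it does so by observing that any section vanishes on $C_j$ and on all deformations of $C_j$ obtained by moving the hyperplanes, and that these deformations dominate $F_j$. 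You avoid this sweeping argument entirely by inverting the quantifiers: you fix putative nonzero sections $s_j$ \emph{before} choosing the hyperplanes, impose the finitely many additional open conditions $C_j \not\subset Z(s_j)$, deduce $D_S \cdot C_j \geq 0$ for every $j$ with $k_j \neq 0$, and contradict $D_S^2 < 0$. What your route buys is elementarity---no negativity lemma beyond the definition of negative definiteness, and no deformation/domination argument---at the cost of being a proof by contradiction in which the surface depends on the chosen sections; the paper's route produces the index $j$ from intersection theory alone, independently of any section, which is why it needs the extra sweeping step. One small slip: your assertion that each $F_i$ has dimension $n-1 \geq 2$ fails when $\dim Y = 2$, but in that case no hyperplanes are cut, $C_i = F_i$, and your argument goes through verbatim, so nothing is lost.
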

\begin{proof}
  If $j$ is any number with $k_j = 0$, then the trivial sheaf
  $\sO_{F_j}$ injects into $\sO_{\wtilde Y}(\sum k_iF_i)|_{F_j}$ and
  equation~\eqref{eq:platon} cannot hold. To prove
  Proposition~\ref{prop:KMMimproved}, it therefore suffices to find a
  number $j$ such that \eqref{eq:platon} holds; the assertion $k_j
  \not = 0$ is then automatic.

  In order to do this consider general hyperplanes $\wtilde H_1,
  \ldots, \wtilde H_{\dim Y-2} \subset \wtilde Y$, and let $\wtilde H
  = \wtilde H_1 \cap \cdots \cap \wtilde H_{\dim Y-2}$ be their
  intersection. Then $\wtilde H$ is a smooth surface and the
  intersections $C_i := \wtilde H \cap F_i$ are smooth curves.  The
  Stein-factorization of $\phi|_{\wtilde H}$,
  $$
  \xymatrix{ \wtilde H \ar[r]_{\alpha}
    \ar@/^.4cm/[rr]^{\phi|_{\wtilde H}} & \wtilde H' \ar[r]_{\beta} & Y,
  }
  $$
  gives $\alpha: \wtilde H\to \wtilde H'$, a birational morphism that
  maps to a normal surface and contracts precisely the curves $C_i
  \subset \wtilde H$. Using Mumford's criterion that the intersection
  matrix $\bigl(C_i\cdot C_j\bigr)_{i,j}$ is negative definite, we see
  that there exists a $j\in\{1,\dots,k\}$ such that
  $$
  \deg_{C_j} \sO_{\wtilde Y}(\textstyle{\sum} k_i F_i)|_{C_j} = C_j
  \cdot \left(\sum k_i F_i\bigl|_{\wtilde H} \right) < 0,
  $$
  where the intersection product in the middle term is that of curves
  on the smooth surface $\wtilde H$, cf.~\cite[Lem.~5-1-7]{KMM87}. In
  particular, any section in $\sigma \in H^0\bigl(F_j,\,
  \sO_{F_j}(\textstyle{\sum} k_i F_i|_{F_j})\bigr)$ vanishes on $C_j$
  and on all of its deformations.  Since the $\wtilde H_i$ are
  general, those deformations dominate $F_j$, and the section $\sigma$
  must vanish on all of $F_j$. This shows \eqref{eq:platon} and
  completes the proof.
\end{proof}

\subsection{Proof of Proposition~\ref*{prop:extension}: Extendability over isolated points}
\label{ssec:extoverdiscr}

Before proving Proposition~\ref{prop:extension} in full generality in
Section~\ref{ssec:p71-fullpf} below, we consider the case where
reflexivity of $\pi_* \Omega^1_{\wtilde Z}(\log \wtilde \Delta_{\lc})$
is already known away from a finite set. This result will be used as
the anchor for the inductive argument used in
Section~\ref{ssec:p71-fullpf}. The argument relies on a vanishing
result of Steenbrink, \cite{Steenbrink85}.

\begin{prop}\label{prop:isolatedextension}
  In the setup of Proposition~\ref{prop:extension}, let $\Sigma
  \subset T$ be a finite set of points. Assume that $\pi_*
  \Omega^1_{\wtilde Z}(\log \wtilde \Delta_{\lc})$ is reflexive away
  from $\Sigma$. Then $\pi_* \Omega^1_{\wtilde Z}(\log \wtilde
  \Delta_{\lc})$ is reflexive.
\end{prop}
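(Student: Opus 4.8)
The plan is to turn the global reflexivity statement into a purely local depth computation at the finitely many points of $\Sigma$, and then to establish the missing depth by a vanishing theorem for local cohomology supported in the exceptional fibres. Reflexivity is local on $Z$ and, by hypothesis, already known on $Z\setminus\Sigma$; moreover $\sG:=\pi_*\Omega^1_{\wtilde Z}(\log\wtilde \Delta_{\lc})$ is torsion-free, being the push-forward of a locally free sheaf. Since every point of $\Sigma\subset T$ has a positive-dimensional fibre we have $\dim Z\ge 2$, so $\Sigma$ has codimension at least two and it suffices to prove that $\sG$ has depth at least two at each $p\in\Sigma$. Fix such a $p$, choose a sufficiently small neighborhood $U$ with $U\cap\Sigma=\{p\}$, and write $\wtilde Z_U:=\pi^{-1}(U)$, $F_p:=\pi^{-1}(p)$ and $\sF:=\Omega^1_{\wtilde Z}(\log\wtilde \Delta_{\lc})$. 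As $\sG$ is already reflexive on $U\setminus\{p\}$, every section of $\sG$ over $U\setminus\{p\}$ is a section of $\sF$ over $\wtilde Z_U\setminus F_p$, and so the required depth bound follows once the restriction $H^0(\wtilde Z_U,\sF)\to H^0(\wtilde Z_U\setminus F_p,\sF)$ is surjective. By the long exact sequence of local cohomology with support in $F_p$, this surjectivity holds as soon as $H^1_{F_p}(\wtilde Z_U,\sF)=0$.

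To compute $H^1_{F_p}(\wtilde Z_U,\sF)$ I would invoke the variant of Hartshorne's formal duality theorem for cohomology with supports recalled in Section~\ref{ssec:extoverdiscr}. Because $\wtilde Z_U$ is smooth of dimension $n$ and $\sF$ is locally free, this identifies $H^1_{F_p}(\wtilde Z_U,\sF)$ with the dual of the degree $n-1$ cohomology, along the formal completion of $\wtilde Z_U$ at $F_p$, of the sheaf $\sF^*\otimes\Omega^n_{\wtilde Z}$. The wedge pairing $\Omega^1_{\wtilde Z}(\log\wtilde \Delta_{\lc})\otimes\Omega^{n-1}_{\wtilde Z}(\log\wtilde \Delta_{\lc})\to\Omega^n_{\wtilde Z}(\log\wtilde \Delta_{\lc})=\Omega^n_{\wtilde Z}(\wtilde \Delta_{\lc})$ is perfect, since $\wtilde \Delta_{\lc}$ has simple normal crossings by admissibility, and it yields a canonical isomorphism $\sF^*\otimes\Omega^n_{\wtilde Z}\cong\Omega^{n-1}_{\wtilde Z}(\log\wtilde \Delta_{\lc})(-\wtilde \Delta_{\lc})$. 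Thus the vanishing of $H^1_{F_p}(\wtilde Z_U,\sF)$ is equivalent to the vanishing, along $F_p$, of the formal cohomology group $H^{n-1}$ of $\Omega^{n-1}_{\wtilde Z}(\log\wtilde \Delta_{\lc})(-\wtilde \Delta_{\lc})$.

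By the theorem on formal functions this last group is the completion at $p$ of the higher direct image $R^{n-1}\pi_*\Omega^{n-1}_{\wtilde Z}(\log\wtilde \Delta_{\lc})(-\wtilde \Delta_{\lc})$, so it is enough to show that this sheaf vanishes near $p$. This is precisely the shape of Steenbrink's vanishing theorem \cite{Steenbrink85} for a log resolution of a log canonical pair, namely $R^q\pi_*\Omega^p_{\wtilde Z}(\log\wtilde \Delta_{\lc})(-\wtilde \Delta_{\lc})=0$ for $p+q>n$; here $p=q=n-1$, so the hypothesis reads $2(n-1)>n$, which holds for all $n\ge 3$. I expect the invocation of Steenbrink's vanishing to be the main obstacle: one must check carefully that its hypotheses are met for the (possibly non-isolated) log canonical singularities carrying the boundary $\wtilde \Delta_{\lc}$, and the borderline surface case $n=2$ has to be treated separately, for instance through the finite-covering results of Section~\ref{sec:ext-ext}.
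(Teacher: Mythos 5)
Your reduction of reflexivity to the surjectivity of $H^0\bigl(\wtilde Z_U,\, \sF\bigr) \to H^0\bigl(\wtilde Z_U \setminus F_p,\, \sF\bigr)$, and the translation of that surjectivity into local cohomology and then, via formal duality and the perfect pairing, into the vanishing of $\bigl(R^{n-1}\pi_*\,\Omega^{n-1}_{\wtilde Z}(\log \wtilde\Delta_{\lc})(-\wtilde\Delta_{\lc})\bigr)^{\widehat{\ }}_p$, is exactly the skeleton of the paper's Step~1. But the step you yourself flag as "the main obstacle" is where the argument breaks, and it cannot be repaired by checking hypotheses more carefully: Steenbrink's theorem \cite[Thm.~2.b]{Steenbrink85} requires the log divisor to be the \emph{reduced preimage of the locus over which $\pi$ fails to be an isomorphism}; in particular it must contain \emph{every} exceptional divisor over $p$, including those of discrepancy $> -1$. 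The divisor $\wtilde\Delta_{\lc}$ contains, by construction, only the divisors lying over $\Delta$ and the non-klt locus, and in the essential case --- $p$ a klt point with $\Delta = \emptyset$ nearby --- it is empty in a neighborhood of $F_p$. Worse, the vanishing you reduce to is then simply false. Take $Z \subset \bC^4$ the ordinary double point $\{x^2+y^2+z^2+w^2=0\}$ (terminal, hence klt, so $\wtilde\Delta_{\lc} = \emptyset$) and $\pi$ the blow-up of the origin, with exceptional divisor $E \cong \bP^1 \times \bP^1$ and conormal bundle $\sO_E(1,1)$. From $0 \to N^*_E \otimes \Omega^1_E \to \Omega^2_{\wtilde Z}|_E \to \Omega^2_E \to 0$ and the K\"unneth formula one gets $H^2\bigl(E,\, \Omega^2_{\wtilde Z}|_E \otimes \Sym^k N^*_E\bigr) \cong H^2\bigl(E,\, \sO_E(k-2,k-2)\bigr)$, which is $\bC$ for $k=0$ and $0$ for $k \geq 1$; the theorem on formal functions then gives $\bigl(R^{2}\pi_*\,\Omega^{2}_{\wtilde Z}\bigr)^{\widehat{\ }}_z \cong \bC \neq 0$, i.e.\ $H^1_{E}\bigl(\wtilde Z,\, \Omega^1_{\wtilde Z}\bigr) \neq 0$. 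So the restriction map is surjective (as the Proposition asserts), but not because the obstruction group vanishes --- it does not.

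What the paper does differently, and what your proposal is missing, is a two-step structure designed around exactly this point. First (after further blow-ups, permitted by Lemma~\ref{lem:oneresolutionsuffices}) it applies Steenbrink's vanishing and formal duality not to $\wtilde\Delta_{\lc}$ but to the \emph{enlarged} snc divisor $\wtilde\Delta_{\lc}' := (\wtilde\Delta_{\lc} + F_z)_{\red}$, which does contain the whole fiber; this yields extension of the form with logarithmic poles along $\wtilde\Delta_{\lc}'$. Second, it must remove the spurious logarithmic poles along $F_z \setminus \wtilde\Delta_{\lc}$, i.e.\ prove surjectivity of $H^0\bigl(\wtilde Z,\, \Omega^1_{\wtilde Z}(\log \wtilde\Delta_{\lc})\bigr) \to H^0\bigl(\wtilde Z,\, \Omega^1_{\wtilde Z}(\log \wtilde\Delta_{\lc}')\bigr)$. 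This is arranged by normalizing so that every component of $\Delta$ passes through $z$: then either $z$ lies in $\Delta \cup \{\text{non-klt locus}\}$, in which case the two divisors agree locally and there is nothing to prove, or $\Delta = \emptyset$ and $(Z,\emptyset)$ is klt near $z$, hence has rational singularities \cite[Thm.~5.22]{KM98}, and the pole removal is Namikawa's Hodge-theoretic lemma \cite[Lem.~2]{Namikawa01}. This second step is a genuine additional idea, not a verification, and without it (or some substitute) the proposal does not yield the Proposition. Your treatment of the surface case $n=2$ by separate means matches the paper, which invokes Proposition~\ref{prop:n-1forms}.
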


\begin{proof}
  If $n := \dim Z = 2$, the result is shown in
  Proposition~\ref{prop:n-1forms} above. We will thus assume for the
  remainder of the proof that $n \geq 3$. Since the assertion is local
  on $Z$, we can shrink $Z$ and assume without loss of generality that
  the following holds.
  \begin{enumerate-p}
  \item The set $\Sigma$ contains only a single point, $\Sigma =
    \{z\}$, and
  \item\ilabel{il:deltahitsx} either $\Delta = \emptyset$ or every
    irreducible component of $\Delta$ contains $z$.
  \end{enumerate-p}
  By Lemma~\ref{lem:oneresolutionsuffices}, we are free to blow up
  $\wtilde Z$ further, if necessary. Thus, we can also assume that the
  following holds:
  \begin{enumerate-p}
    \setcounter{enumi}{\value{equation}}
  \item the reduced fiber $F_z := \bigl( \pi^{-1}(z) \bigr)_{\red}$, and
  \item the divisor $\wtilde \Delta_{\lc}' := (\wtilde \Delta_{\lc} + F_z)_{\red}$
    are simple normal crossings divisors on $\wtilde Z$.
  \end{enumerate-p}

  To prove Proposition~\ref{prop:isolatedextension}, after shrinking
  $Z$ more, if necessary, we need to show that the natural restriction
  map
  \begin{equation}\label{eq:extrest}
    H^0\bigl(\wtilde Z,\, \Omega^1_{\wtilde Z}(\log \wtilde
    \Delta_{\lc})\bigr) \to H^0\bigl(\wtilde Z \setminus F_z,\,
    \Omega^1_{\wtilde Z}(\log \wtilde \Delta_{\lc}) \bigr)
  \end{equation}
  is surjective. The proof proceeds in two steps. First, we show
  surjectivity of \eqref{eq:extrest} when we replace $\wtilde
  \Delta_{\lc}$ by the slightly larger divisor $\wtilde
  \Delta_{\lc}'$.  Surjectivity of \eqref{eq:extrest} is then shown in
  a second step.

  \subsubsection*{Step 1: Extension with logarithmic poles along $\wtilde \Delta_{\lc}'$}

  Since $n \geq 3$, a vanishing result of Steenbrink,
  \cite[Thm.~2.b]{Steenbrink85}, asserts that
  \begin{equation} \label{Steenbrinkvanishing}
    R^{n-1}\pi_* \bigl(\sJ_{\wtilde \Delta_{\lc}'} \otimes \Omega^{n-1}_{\wtilde
      Z}(\log \wtilde \Delta_{\lc}')\bigr) = 0.
  \end{equation}
  The Formal Duality Theorem~\ref{thm:formalduality} on page
  \pageref{thm:formalduality} states that for any locally free sheaf
  $\sF$ on $\wtilde Z$ and any number $0 \leq j\leq n$, there exists
  an isomorphism
  $$
  \left((R^j\pi_* \sF)_z \right)^{\widehat\ } \cong
  H^{n-j}_{\pi^{-1}(z)}\bigl(\wtilde Z,\, \sF^*\otimes \omega_{\wtilde
    Z}\bigr)^*,
  $$
  where $\,\widehat{~}\,$ denotes completion with respect to the
  maximal ideal $\mathfrak{m}_z$ of the point $z \in Z$. Setting $\sF
  := \sJ_{\wtilde \Delta_{\lc}'} \otimes \Omega^{n-1}_{\wtilde Z}(\log
  \wtilde \Delta_{\lc}')$ and using that $\sF^* \otimes \omega_{\wtilde
    Z} \cong \Omega^1_{\wtilde Z}(\log \wtilde \Delta_{\lc}')$ we see
  that the vanishing \eqref{Steenbrinkvanishing} implies that the
  following cohomology with supports vanishes,
  $$
  H^1_{F_z}\bigl(\widetilde Z,\, \Omega^1_{\wtilde Z}(\log \wtilde
  \Delta_{\lc}')\bigr) = \{0\}.
  $$
  The standard sequence for cohomology with supports,
  \cite[III~ex.~2.3e]{Ha77},
  $$
  \cdots \!\to\! H^0\bigl(\wtilde Z,\, \Omega^1_{\wtilde Z}(\log
  \wtilde \Delta_{\lc}')\bigr)\! \to\! H^0\bigl(\wtilde Z \setminus
  F_z,\, \Omega^1_{\wtilde Z}(\log \wtilde \Delta_{\lc}')
  \bigr)\!\to\!  H^{1}_{F_z}\bigl(\widetilde Z,\, \Omega^1_{\wtilde
    Z}(\log \wtilde \Delta_{\lc}')\bigr)\!\to\!  \cdots
  $$
  then shows surjectivity of the restriction map~\eqref{eq:extrest}
  for the larger boundary divisor $\wtilde \Delta_{\lc}'$.

  \subsubsection*{Step 2: Extension as a form with logarithmic poles along $\wtilde
    \Delta_{\lc}$}

  To prove surjectivity of \eqref{eq:extrest}, we will show that the
  natural inclusion
  \begin{equation}\label{eq:icnl}
    H^0\bigl(\wtilde Z,\, \Omega^1_{\wtilde Z}(\log \wtilde
    \Delta_{\lc})\bigr) \to H^0\bigl(\wtilde Z,\, \Omega^1_{\wtilde Z}(\log
    \wtilde \Delta_{\lc}')\bigr)
  \end{equation}
  is surjective. The results of Step~1 will then finish the proof of
  Proposition~\ref{prop:isolatedextension}.

  If $z \in \Delta$, or if $z$ is contained in the non-klt locus, then the
  divisors $\wtilde \Delta$ and $\wtilde \Delta'$ agree after some additional
  shrinking of $Z$, and~\eqref{eq:icnl} is the identity map.  So we may assume
  that $z \not \in \Delta$, and that the pair $(Z,\Delta)$ is log terminal
  (i.e., plt) in a neighborhood of $z$.  Assumption~\iref{il:deltahitsx} then
  asserts that $\Delta = \emptyset$. It follows that $\wtilde \Delta_{\lc} =
  \emptyset$, and that $\wtilde \Delta_{\lc}' = F_z$. In this setup, recall
  the well-known result that $Z$ has only rational singularities at $z$,
  cf.~\cite[Thm.~5.22]{KM98}.  For rational singularities, surjectivity
  of~\eqref{eq:icnl} has been shown by Namikawa, \cite[Lem.~2]{Namikawa01}.
\end{proof}

\subsection{Proof of Proposition~\ref*{prop:extension}: End of Proof}
\label{ssec:p71-fullpf}

To finish the proof of Proposition~\ref{prop:extension}, after
possibly shrinking $Z$, let $\sigma \in H^0\bigl(\wtilde Z\setminus
E,\, \Omega^1_{\wtilde Z}(\log \wtilde \Delta_{\lc})\bigr)$ be any
form defined outside the $\pi$-exceptional set $E := \Exc(\pi)$, and
let $\wtilde \sigma \in H^0\bigl(\wtilde Z,\, \Omega^1_{\wtilde
  Z}(\log \wtilde \Delta_{\lc})(*E)\bigr)$ be its extension to
$\wtilde Z$ as a logarithmic form, possibly with poles along $E$.

We need to show that indeed $\wtilde \sigma$ does not have any poles
as a logarithmic form. More precisely, if $E' \subset E$ is any
irreducible component, then we show that $\wtilde \sigma$ does not
have any poles along $E'$, i.e.,
\begin{equation}\label{eq:aim13}
\wtilde \sigma \in H^0\bigl(\wtilde Z,\, \Omega^1_{\wtilde Z}(\log \wtilde
\Delta_{\lc})(*(E-E'))\bigr).
\end{equation}
To prove this, we proceed by induction on pairs $\bigl(\dim Z, \codim
\pi(E') \bigr)$, which we order lexicographically as indicated in
Table~\ref{tab:lexo}.
\begin{table}[t]
  \centering
  \begin{tabular}{lccccccccccc}
    No. & 1 & 2 & 3 & 4 & 5&6&7&8&9&10&$\cdots$ \\
    \hline
    $\dim Z$ & 2 & 3 & 3 & 4&4&4&5&5&5&5&$\cdots$ \\
    $\codim \pi(E')$ & 2&2&3&2&3&4&2&3&4&5&$\cdots$ \\
    \\
  \end{tabular}
  \caption{Lexicographical ordering of dimensions and codimensions}
  \label{tab:lexo}
\end{table}

For convenience of notation, we renumber the irreducible components
$E_i$ of $E$, if necessary and assume that $E' = E_0$, and that there
exists a number $k$ such that
$$
\{E_0, \ldots, E_k \} = \left\{ E_i \subset E \text{ an irreducible
  component} \,|\, \pi(E_i)=\pi(E_0) \right\}
$$
Further, let $k_i \in \mathbb N$ be the pole orders of $\wtilde \sigma$
along the $E_i$, i.e., the minimal numbers such that
$$
\wtilde \sigma \in H^0\bigl(\wtilde Z,\, \Omega^1_{\wtilde Z}(\log
\wtilde \Delta_{\lc})\otimes \sO_{\wtilde Z}(\textstyle{\sum} k_i
E_i)\bigr).
$$
To prove~\eqref{eq:aim13} it is then equivalent show that $k_0 = 0$.

\subsubsection*{Start of induction}

In case $\dim Z = \codim \pi(E_0)=2$, the set $T$ of fundamental
points is necessarily isolated, and
Proposition~\ref{prop:isolatedextension}
applies\footnote{Alternatively, Proposition~\ref{prop:n-1forms} would
  also apply.}.

\subsubsection*{Inductive step}

Our induction hypothesis is that the extension statement as
in~\eqref{eq:aim13} holds for all log canonical pairs $(X, D)$, for
all admissible morphisms $\pi_X : \wtilde X \to X$, all logarithmic
forms on $\wtilde X$ defined outside the $\pi_X$-exceptional set and
all $\pi_X$-exceptional divisors $E'_X \subset \wtilde X$ where either
$$
\dim X < \dim Z \quad {\rm or} \quad \bigl(\dim X = \dim Z
\, {\rm and} \, \codim \pi_X(E'_X) < \codim \pi(E_0)\bigr).
$$

If $\dim Z = \codim \pi(E_0)$, then the induction hypothesis asserts
that the set of points where $\pi_* \Omega^1_{\wtilde Z}(\log \wtilde
\Delta_{\lc})$ is not already known to be reflexive is at most
finite. But then Proposition~\ref{prop:isolatedextension} again
implies that $\pi_* \Omega^1_{\wtilde Z}(\log \wtilde \Delta_{\lc})$
is reflexive everywhere, and the claim holds. We will therefore assume
without loss of generality for the remainder of this proof that $\dim
Z > \codim \pi(E_0)$, or, equivalently, that $\dim \pi(E_0) > 0$.

Now choose general hyperplanes $H_1, \ldots, H_{\dim \pi(E_0)} \subset
Z$, consider their intersection $H := H_1 \cap \cdots \cap H_{\dim
  \pi(E_0)}$ and its preimage $\wtilde H := \pi^{-1}(H)$.  Setting
$\Delta_H := \supp(\Delta \cap H)$ and $\wtilde H := \pi^{-1}(H)$, a
repeated application of Lemma~\ref{lem:mayrestrict} then guarantees
that the pair $(H, \Delta_H)$ is log canonical, and the restricted
morphism $\pi|_{\wtilde H}$ is admissible. If $\wtilde \Delta_{H,\lc}
\subset \wtilde H$ is the divisor discussed in
Lemma~\ref{lem:mayrestrict}, the induction hypothesis applies to forms
on $\wtilde H$ with logarithmic poles along $\wtilde \Delta_{H,\lc}
\subset \wtilde \Delta_{\lc}|_{\wtilde H}$.

The variety $H$ then intersects $\pi(E_0)$ in finitely many points
which are general in $\pi(E_0)$. Let $z \in H \cap \pi(E_0)$ be one of
them, and let $F_z := \pi^{-1}(z)$ be the fiber over $z$.  Shrinking
$Z$, if necessary, we may assume without loss of generality that $z$
is the only point of intersection, $\{z\} = H \cap \pi(E_0)$. The
fiber $F_z \subset \wtilde H$ will generally be reducible, and need
not be of pure dimension. However, if we set
$$
F_{z,i} := F_z \cap E_i
$$
then an elementary computation of dimensions and codimensions shows
that the first $(k+1)$ intersections, $F_{z,0}, \ldots, F_{z,k}
\subset F_z$, are precisely those irreducible components of $F_z$ that
have codimension one in $\wtilde H$. For $0 \leq i \leq k$ we also
obtain that
$$
F_{z,i} := E_i \cap \wtilde H.
$$
In particular, since $\pi|_{\wtilde H}$ is admissible by
Lemma~\ref{lem:mayrestrict} and the $E_i$ are all smooth by
assumption, Bertini's theorem applies to show that the $(F_{z,i})_{0
  \leq i \leq k}$ are smooth as well. Note that all prerequisites of
Proposition~\ref{prop:KMMimproved} are thus satisfied. We will apply
that proposition later near the end of the proof.

Now consider the standard restriction sequence for logarithmic forms,
cf.~\cite[Lem.~2.13 and references there]{KK08},
$$
\xymatrix{ 0 \ar[r] & N^{*\phantom{\wtilde H/\wtilde Z}}_{\wtilde
    H/\wtilde Z} \ar[r] & \Omega^1_{\wtilde Z}(\log \wtilde
  \Delta_{\lc})|_{\wtilde H} \ar^{\varrho}[r] & \Omega^1_{\wtilde
    H}(\log \wtilde \Delta_{\lc}|_{\wtilde H}) \ar[r] & 0,}
$$
its twist with $\sF := \sO_{\wtilde H}(\sum k_i E_i|_{\wtilde H})$ and
its restriction to $F_{z,j}$, for $0 \leq j \leq k$:
$$
\xymatrix{ N^*_{\wtilde H/\wtilde Z} \otimes \sF \ar[r]^(.4){\alpha}
  \ar[d] & \Omega^1_{\wtilde Z}(\log \wtilde \Delta_{\lc})|_{\wtilde
    H} \otimes \sF \ar[r]^{\beta} \ar[d]_{r_{1,j}} & \Omega^1_{\wtilde
    H}(\log \wtilde \Delta_{\lc}|_{\wtilde H}) \otimes \sF
  \ar[d]_{r_{2,j}} \\
  N^*_{\wtilde H/\wtilde Z} \otimes \sF\bigl|_{F_{z,j}}
  \ar[r]_(.47){\alpha_j} & \Omega^1_{\wtilde Z}(\log \wtilde
  \Delta_{\lc}) \otimes \sF\bigl|_{F_{z,j}} \ar[r]_(.48){\beta_j} &
  \Omega^1_{\wtilde H}(\log \wtilde \Delta_{\lc}|_{\wtilde H}) \otimes
  \sF\bigl|_{F_{z,j}}.}
$$
The induction hypothesis now asserts that $\wtilde \sigma|_{\wtilde
  H}$ is a regular logarithmic form on $\wtilde H$.  More precisely,
using the notation $\varrho: \Omega^1_{\wtilde Z}(\log \wtilde
\Delta_{\lc})|_{\wtilde H} \to \Omega^1_{\wtilde H}(\log \wtilde
\Delta_{\lc}|_{\wtilde H})$ from above, we have
\begin{align}
  \label{eq:arpf-van1} \varrho(\wtilde \sigma|_{\wtilde H}) & \in
  H^0\bigl( \wtilde H,\, \Omega^1_{\wtilde H}(\log \wtilde \Delta_{H,
    \lc})\bigr) &&
  \text{by the induction hypothesis}\\
  \label{eq:arpf-van2} & \subseteq H^0\bigl( \wtilde H,\,
  \Omega^1_{\wtilde H}(\log \wtilde \Delta_{\lc}|_{\wtilde H})\bigr)
  && \text{because } \wtilde \Delta_{H,\lc} \subseteq \wtilde
  \Delta_{\lc}|_{\wtilde H}\,
  \text{\ by \eqref{lem:mayrestrict}}\\
  \label{eq:arpf-van3} & \subseteq H^0\bigl( \wtilde H,\,
  \Omega^1_{\wtilde H}(\log \wtilde \Delta_{\lc}|_{\wtilde H}) \otimes
  \sF\bigr). && \text{because } \sO_{\wtilde H} \subseteq \sF
\end{align}
If $j$ is any number with $k_j > 0$, we can say more. The choice of
the $k_j$ guarantees that $\wtilde \sigma|_{\wtilde H}$ is a section
in $\Omega^1_{\wtilde Z}(\log \wtilde \Delta_{\lc})|_{\wtilde H}
\otimes \sF$ that does not vanish along $\wtilde H \cap E_j$.  On the
other hand, \eqref{eq:arpf-van1}--\eqref{eq:arpf-van3} asserts that
$\beta(\wtilde \sigma|_{\wtilde H})$, i.e., $\varrho(\wtilde
\sigma|_{\wtilde H})$ viewed as a section of $\Omega^1_{\wtilde
  H}(\log \wtilde \Delta_{\lc}|_{\wtilde H}) \otimes \sF$, must
necessarily vanish along $\wtilde H \cap E_j$.  In other words, we
obtain that
$$
r_{1,j}(\wtilde \sigma|_{\wtilde H}) \ne 0 \text{\quad and \quad}
(\beta_j \circ r_{1,j})(\wtilde \sigma|_{\wtilde H}) = (r_{2,j}\circ
\beta)(\wtilde \sigma|_{\wtilde H}) = 0.
$$
In other words, $r_{1,j}(\wtilde \sigma|_{\wtilde H})$ is a
non-trivial section in the kernel of $\beta_j$. Consequently,
$h^0\bigl(F_{z,j},\, N^*_{\wtilde H/\wtilde Z}\otimes \sO_{\wtilde
  H}(\textstyle{\sum}k_i\cdot E_i) \bigr) \ne 0$ for all $j$ with $k_j
> 0$. Note, however, that the restriction of the conormal bundle
$N^*_{\wtilde H/\wtilde Z}$ to $F_z$ ---and hence to $F_{z,j}$--- is
trivial because it is a pull-back from $H$, that is, $N^*_{\wtilde
  H/\wtilde Z} = (\pi|_{\wtilde H})^* (N^*_{H/Z})$.

Summing up, we obtain that
\begin{equation}\label{eq:nearlythere}
  h^0\bigl(F_{z,j},\, \sO_{F_{z,j}}(\textstyle{\sum}k_i E_i|_{F_{z,j}})
  \bigr) \ne 0 \text{ for all $j$ with $k_j > 0$}.
\end{equation}
Now, if there \emph{was} a number $0 \leq j \leq k$ with $k_j > 0$,
then Inequality~\eqref{eq:nearlythere} would clearly contradict
Proposition~\ref{prop:KMMimproved}. It follows that all $(k_j)_{0 \leq
  j \leq k}$ must be zero. In particular, $k_0 = 0$ as claimed.  This
completes the proof of Proposition~\ref{prop:extension} and thus the
proof of Theorem~\ref{thm:extension-lc} for one-forms. \qed

\part{BOGOMOLOV-SOMMESE VANISHING ON SINGULAR SPACES}

\section{Pull-back properties for sheaves of differentials and proof of Theorem~\ref*{thm:Bvanishing}}
\label{sec:bogomolov}

In this section we apply the Extension Theorem~\ref{thm:extension-lc}
to sheaves of reflexive differentials on singular pairs, i.e., sheaves
of differentials that are defined away from the singular set. In good
situations, we show that the pull-back of a sheaf of reflexive
differentials to a log resolution can still be interpreted as a sheaf
of differentials, and that the Kodaira-Iitaka dimension of the sheaves
do not change in the process. The Bogomolov-Sommese Vanishing
Theorem~\ref{thm:Bvanishing} follows as an immediate corollary.

\begin{thm}[Extension  for sheaves of differentials]\label{thm:VZsheafextension}
  Let $(Z,\Delta)$ be a logarithmic pair, and $\pi: (\wtilde Z,
  \wtilde \Delta) \to (Z, \Delta)$ a log resolution. Let $\Top$ be
  a reflexive tensor operation and suppose that there exists a
  reflexive sheaf $\sA$ with inclusion $\iota : \sA \to \Top
  \Omega^1_Z(\log \Delta)$.  Further, assume that one of the following
  two additional assumptions holds:
  \begin{enumerate-p}
  \item\ilabel{il:gpg} the pair $(Z,\Delta)$ is finitely dominated by
    analytic snc pairs, or
  \item\ilabel{il:pgp} the pair $(Z,\Delta)$ is log canonical, the
    sheaf $\sA$ is $\Q$-Cartier and $\Top = \bigwedge^{[p]}$, where $p
    \in \{\dim Z, \dim Z-1, 1\}$.
  \end{enumerate-p}
  Then there exists a factorization
  $$
  \pi^{[*]}\sA \into \sC \into \Top \Omega^1_{\wtilde Z} \bigl(\log
  (\wtilde \Delta+E_\Delta) \bigr),
  $$
  where $E_\Delta \subset \wtilde Z$ is the union of those
  $\pi$-exceptional divisors that are not contained in $\wtilde
  \Delta$, $\sC$ is invertible and $\kappa(\sC) = \kappa(\sA)$.
\end{thm}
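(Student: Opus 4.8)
The plan is to reduce the statement to the Extension Theorem~\ref{thm:extension-lc} (equivalently, to the fact that the extension theorem holds for $\Top$-forms in the relevant cases) and then run a standard reflexive-sheaf argument to produce the invertible sheaf $\sC$ with the claimed Kodaira-Iitaka dimension. First I would record that in both cases~\iref{il:gpg} and~\iref{il:pgp} the extension theorem holds for $\Top$-forms on $(Z,\Delta)$: in case~\iref{il:gpg} this is exactly Remark~\ref{rem:extthmforfdsap}, and in case~\iref{il:pgp} it is Theorem~\ref{thm:extension-lc} combined with Definition~\ref{def:extensionthmholds}, using that $\Top=\bigwedge^{[p]}$ with $p\in\{\dim Z,\dim Z-1,1\}$ and that $E_\Delta$ is precisely the union of $\pi$-exceptional divisors not contained in $\wtilde\Delta$. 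The point is that the extension theorem gives reflexivity of $\pi_*\Top\Omega^1_{\wtilde Z}(\log(\wtilde\Delta+E_\Delta))$, hence a natural identification of its sections over any open $U$ with $\Top$-forms defined only away from the singular/exceptional locus.

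\medskip

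Next I would construct the factorization. Since $\sA$ is reflexive of rank one and $\iota:\sA\to\Top\Omega^1_Z(\log\Delta)$ is an inclusion of sheaves, over the snc locus $(Z,\Delta)_{\reg}$ the pull-back $\pi^{[*]}\sA$ maps into $\Top\Omega^1_{\wtilde Z}(\log(\wtilde\Delta+E_\Delta))$ via $\pi^*\iota$. A rational section of $\pi^{[*]}\sA$ thus gives a rational $\Top$-form on $\wtilde Z$ that is regular (with log poles along $\wtilde\Delta+E_\Delta$) away from $\Exc(\pi)$; the extension theorem then guarantees that this form has no poles along the exceptional divisors either, so the morphism $\pi^{[*]}\sA\to\Top\Omega^1_{\wtilde Z}(\log(\wtilde\Delta+E_\Delta))$ extends over all of $\wtilde Z$. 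I would then let $\sC$ be the saturation of the image of this map inside the locally free sheaf $\Top\Omega^1_{\wtilde Z}(\log(\wtilde\Delta+E_\Delta))$. The saturation of a rank-one subsheaf of a locally free sheaf on a smooth variety is reflexive of rank one, hence invertible since $\wtilde Z$ is smooth; this yields the desired chain $\pi^{[*]}\sA\into\sC\into\Top\Omega^1_{\wtilde Z}(\log(\wtilde\Delta+E_\Delta))$.

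\medskip

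It remains to check $\kappa(\sC)=\kappa(\sA)$. For this I would compare reflexive symmetric/tensor powers and push forward. On the one hand, $\pi^{[*]}\sA\hookrightarrow\sC$ is an isomorphism in codimension one on $\wtilde Z$ (the saturation changes the sheaf only along a subset of codimension $\geq 2$ over $(Z,\Delta)_{\reg}$, while over the exceptional locus the two sheaves agree in codimension one by construction), so $h^0(\wtilde Z,\sC^{[m]})$ and $h^0(Z,\sA^{[m]})$ are compared via $\pi_*$. Concretely, I would show $\pi_*\sC^{[m]}\cong\sA^{[m]}$ as reflexive sheaves on $Z$ for all $m$: both are reflexive rank-one sheaves that agree on the big open set $(Z,\Delta)_{\reg}\setminus(\text{codim}\geq2)$, hence agree everywhere by reflexivity. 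Taking global sections gives $h^0(\wtilde Z,\sC^{[m]})=h^0(Z,\sA^{[m]})$ for every $m$, and the associated rational maps have the same image, whence $\kappa(\sC)=\kappa(\sA)$.

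\medskip

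The main obstacle I anticipate is the careful bookkeeping at the boundary between the two notions of poles. In case~\iref{il:pgp} the extension theorem is stated for $\wtilde\Delta_{\lc}$ (which may be smaller than $\wtilde\Delta+E_\Delta$ when some klt exceptional divisors carry no logarithmic pole), while the target here is $\Top\Omega^1_{\wtilde Z}(\log(\wtilde\Delta+E_\Delta))$; I would need to verify that enlarging the log boundary from $\wtilde\Delta_{\lc}$ to $\wtilde\Delta+E_\Delta$ does not disturb the identification $\pi_*\sC^{[m]}\cong\sA^{[m]}$, which hinges on the extra divisors being $\pi$-exceptional and therefore invisible to $\pi_*$ at the level of reflexive hulls. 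The comparison of Kodaira-Iitaka dimensions must also use that $\sC$ is $\Q$-Cartier-compatible with $\sA$ precisely because $\sA^{[m]}$ becomes invertible for some $m$ in case~\iref{il:pgp}; threading the reflexive powers through $\pi^{[*]}$ without losing or gaining sections is the delicate point, but it is controlled entirely by the codimension-one agreement established above.
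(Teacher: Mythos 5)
Your setup and construction of $\sC$ coincide with the paper's: in both cases the extension theorem holds for $\Top$-forms, Proposition~\ref{prop:sheafextension1} embeds $\pi^{[*]}\sA$ into the locally free sheaf $\Top\Omega^1_{\wtilde Z}\bigl(\log(\wtilde\Delta+E_\Delta)\bigr)$, and $\sC$ is taken to be the saturation of the image, which is invertible. The gap lies entirely in your third step, the comparison of Kodaira--Iitaka dimensions, which is the real content of the theorem. Your argument rests on the claim that $\pi^{[*]}\sA\into\sC$ is an isomorphism in codimension one. This is unjustified and false in general: saturating a rank-one subsheaf of a locally free sheaf can change it along divisors (a subsheaf $\sL(-D)\subset\sL\subset\sE$ with $\sL$ saturated and $D$ an effective divisor has saturation $\sL$), the theorem nowhere assumes that $\sA$ is saturated in $\Top\Omega^1_Z(\log\Delta)$, and nothing in the construction prevents the image of $\pi^{[*]}\sA$ from vanishing along $\pi$-exceptional divisors. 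Moreover, if your claim were true then $\pi^{[*]}\sA$ and $\sC$, both being reflexive, would be equal, and you would in effect be asserting $\kappa(\pi^{[*]}\sA)=\kappa(\sA)$ --- precisely the statement that the Warning following Theorem~\ref{thm:VZsheafextension} singles out as the trap: reflexive tensor powers do not commute with pull-back, so $\kappa(\pi^{[*]}\sA)$ may be strictly smaller than $\kappa(\sA)$. Your auxiliary claim that $\pi_*\sC^{[m]}$ is reflexive (needed to identify it with $\sA^{[m]}$) is likewise unsupported: push-forwards of locally free sheaves under birational morphisms are typically not reflexive, and such reflexivity is exactly what the extension theorem asserts, only for the tensor operations it covers.

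What is actually needed, and what you do not supply, is an embedding $\pi^{[*]}\bigl(\sA^{[m]}\bigr)\into\sC^{\otimes m}$ for every $m$, since only such a map carries $H^0\bigl(Z,\,\sA^{[m]}\bigr)$ into $H^0\bigl(\wtilde Z,\,\sC^{\otimes m}\bigr)$ and yields $\kappa(\sA)\leq\kappa(\sC)$. The paper obtains this by genuinely different arguments in the two cases --- which is why its case distinction persists past the construction of $\sC$, whereas in your proposal it plays no further role after that point. For finitely dominated pairs, the extension theorem holds for \emph{all} reflexive tensor operations, so Proposition~\ref{prop:sheafextension1} applies to $\Sym^{[m]}\Top$ for each $m$, and the resulting embedding of $\pi^{[*]}\sA^{[m]}$ into $\Sym^m\Top\Omega^1_{\wtilde Z}\bigl(\log(\wtilde\Delta+E_\Delta)\bigr)$ factors through $\sC^{\otimes m}$ because the latter is saturated. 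In the log canonical case this route is closed: Theorem~\ref{thm:extension-lc} covers $\bigwedge^{[p]}$ but not $\Sym^{[m]}\bigwedge^{[p]}$. Here the paper uses the $\Q$-Cartier hypothesis in an essential way, passing to the index-one cover $\gamma:(X,D)\to(Z,\Delta)$ associated with $\sA$, on which $\gamma^{[*]}\sA$ is invertible; for invertible sheaves reflexive powers do commute with pull-back (Lemma~\ref{lem:l1}), the pair $(X,D)$ is again log canonical so the extension theorem applies there, and one descends back to $\wtilde Z$ via the finite-cover pole criteria of Corollary~\ref{cor:pb2}. None of this machinery appears in your proposal, and the obstacle you flag as the main one (the bookkeeping between $\wtilde\Delta_{\lc}$ and $\wtilde\Delta+E_\Delta$) is minor by comparison.
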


\begin{warning}
  Since $ \pi^{[*]}\sA$ is a subsheaf of $\sC$, it might be tempting
  to believe that the equality $\kappa(\sC) = \kappa(\sA)$ is
  immediate. Note, however, that the reflexive tensor products used in
  Definition~\ref{def:kappa1} of the Kodaira-Iitaka dimension
  generally \emph{do not} commute with pull-back. The Kodaira-Iitaka
  dimension $\kappa(\pi^{[*]}\sA)$ could therefore be strictly smaller
  than $\kappa(\sA)$.
\end{warning}

Before proving Theorem~\ref{thm:VZsheafextension} in
Section~\ref{sec:pfVZsheafext} below, we remark that the following,
slightly stronger variant of the Bogomolov-Sommese vanishing
Theorem~\ref{thm:Bvanishing} for log canonical threefolds and surfaces
follows as an immediate corollary to
Theorem~\ref{thm:VZsheafextension}.

\begin{thm}[Bogomolov-Sommese vanishing for log canonical pairs]\label{thm:Bvanishing2}
  Let $(Z, \Delta)$ be a log canonical logarithmic pair. If $p \in
  \{\dim Z, \dim Z-1, 1 \}$ and if $\sA \subset \Omega^{[p]}_Z(\log
  \Delta)$ is any $\Q$-Cartier reflexive subsheaf of rank one, then
  $\kappa(\sA) \leq p$.
\end{thm}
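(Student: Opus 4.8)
The plan is to derive Theorem~\ref{thm:Bvanishing2} as an immediate consequence of the Extension Theorem~\ref{thm:VZsheafextension}, whose hypotheses are designed precisely to make this deduction work. Given a $\Q$-Cartier reflexive subsheaf $\sA \subset \Omega^{[p]}_Z(\log \Delta)$ of rank one with $p \in \{\dim Z, \dim Z - 1, 1\}$, I would first fix a log resolution $\pi : (\wtilde Z, \wtilde \Delta) \to (Z, \Delta)$ and apply Theorem~\ref{thm:VZsheafextension} in case~\iref{il:pgp}, with $\Top = \bigwedge^{[p]}$. This is legitimate exactly because $(Z,\Delta)$ is log canonical, $\sA$ is $\Q$-Cartier, and $p$ lies in the admissible range. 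The theorem then supplies an invertible sheaf $\sC$ on $\wtilde Z$ together with a factorization
$$
\pi^{[*]}\sA \into \sC \into \Omega^p_{\wtilde Z}\bigl(\log (\wtilde \Delta + E_\Delta)\bigr),
$$
and, crucially, the equality $\kappa(\sC) = \kappa(\sA)$.

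The second step is to invoke the classical Bogomolov-Sommese vanishing theorem on the smooth model. Since $\wtilde Z$ is smooth and $\wtilde \Delta + E_\Delta$ has simple normal crossings (because $\pi$ is a log resolution), the pair $\bigl(\wtilde Z,\, \supp(\wtilde \Delta + E_\Delta)\bigr)$ is an snc pair. The sheaf $\sC$ is an invertible subsheaf of $\Omega^p_{\wtilde Z}\bigl(\log(\wtilde \Delta + E_\Delta)\bigr)$, so the snc version of Bogomolov-Sommese vanishing recalled in the introduction (\cite[Cor.~6.9]{EV92}) applies directly and yields $\kappa(\sC) \leq p$.

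Combining the two steps gives $\kappa(\sA) = \kappa(\sC) \leq p$, which is the desired conclusion. The entire argument is therefore a short two-line deduction once Theorem~\ref{thm:VZsheafextension} is available; no further calculation is needed. The genuine content — controlling the Kodaira-Iitaka dimension through the pull-back and producing the invertible sheaf $\sC$ sitting inside the sheaf of logarithmic $p$-forms on the resolution — has already been packaged into Theorem~\ref{thm:VZsheafextension}, whose proof is deferred to Section~\ref{sec:pfVZsheafext}. Accordingly, I expect no obstacle here; the only point requiring care is to confirm that the hypotheses of case~\iref{il:pgp} are met (log canonical, $\Q$-Cartier, correct range of $p$) so that the extension theorem may be applied, and then that the target of the factorization is genuinely the sheaf of logarithmic $p$-forms on a smooth snc pair, so that the classical vanishing theorem is applicable.
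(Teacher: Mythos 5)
Your proposal is correct and follows essentially the same route as the paper: both apply Theorem~\ref{thm:VZsheafextension} (case of a log canonical pair with $\Q$-Cartier $\sA$ and $\Top = \bigwedge^{[p]}$) to produce an invertible sheaf $\sC \subset \Omega^p_{\wtilde Z}\bigl(\log(\wtilde \Delta + E_\Delta)\bigr)$ with $\kappa(\sC) = \kappa(\sA)$, and then conclude with the classical snc Bogomolov-Sommese vanishing theorem of \cite[Cor.~6.9]{EV92}. The only cosmetic difference is that the paper phrases the deduction as a proof by contradiction, whereas you argue directly; the content is identical.
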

\begin{proof}[Proof of Theorems~\ref{thm:Bvanishing2} and \ref{thm:Bvanishing}]
  We argue by contradiction and assume that there exists a number $p
  \in \{\dim Z, \dim Z-1, 1\}$ and a $\Q$-Cartier reflexive subsheaf
  $\sA \subset \Omega^{[p]}_Z(\log \Delta)$ of rank one, with
  Kodaira-Iitaka dimension $\kappa(\sA) > p$.

  Let $\pi: (\wtilde Z, \wtilde \Delta) \to (Z, \Delta)$ be any log
  resolution.  Theorem~\ref{thm:VZsheafextension} then asserts the
  existence of an invertible sheaf $\sC \subset \Omega^{p}_{\wtilde
    Z}(\log \wtilde \Delta + E_{\Delta})$ with $\kappa(\sC) =
  \kappa(\sA)$.  This contradicts the classical Bogomolov-Sommese
  vanishing theorem for snc pairs, \cite[Cor.~6.9]{EV92}.
\end{proof}

\subsection{Preparations for the proof of Theorem~\ref*{thm:VZsheafextension}}

As a preparation for the proof of Theorem~\ref{thm:VZsheafextension}
we show that the pull-back of a sheaf of reflexive differentials can
be interpreted as a sheaf of differentials if the extension theorem
holds.

\begin{prop}\label{prop:sheafextension1}
  Let $(Z, \Delta)$ be a logarithmic pair, $\Top$ a reflexive
  tensor operation and assume that the extension theorem holds for
  $\Top$-forms on $(Z, \Delta)$, in the sense of
  Definition~\ref{def:extensionthmholds}. If $\pi: (\wtilde Z, \wtilde
  \Delta) \to (Z, \Delta)$ is any log resolution and $E_\Delta \subset
  {\wtilde Z}$ the union of those $\pi$-exceptional components that are
  not contained in $\wtilde \Delta$, then there exists an embedding
  \begin{equation}\label{eq:embed}
  \pi^{[*]} \Top \Omega^1_Z(\log \Delta) \into \Top
  \Omega^1_{\wtilde Z}(\log (\wtilde \Delta + E_\Delta)).
  \end{equation}
\end{prop}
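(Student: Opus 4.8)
The plan is to reduce the construction of the embedding to a standard adjunction argument, in which the hypothesis enters only to guarantee that a certain pushforward sheaf is reflexive. Write $\sG := \Top \Omega^1_Z(\log \Delta)$ for the sheaf of $\Top$-forms on $Z$ and $\sH := \Top \Omega^1_{\wtilde Z}(\log (\wtilde \Delta + E_\Delta))$ for its analogue upstairs. Since $\Top$ is a reflexive tensor operation, $\sG$ is reflexive, while $\sH$ is locally free because $\wtilde Z$ is smooth. First I would prove that the obvious isomorphism over the snc locus upgrades to a \emph{global} isomorphism $\pi_* \sH \cong \sG$; the desired embedding is then produced by adjunction between $\pi^*$ and $\pi_*$.

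To establish $\pi_*\sH \cong \sG$, set $Z^\circ := (Z,\Delta)_{\reg}$ and $\wtilde Z^\circ := \pi^{-1}(Z^\circ)$. By Definition~\ref{def:everythinglog2b} the log resolution $\pi$ is an isomorphism over $Z^\circ$, so no $\pi$-exceptional divisor meets $\wtilde Z^\circ$ and $\wtilde \Delta$ restricts there to the preimage of $\Delta$. Consequently $\pi$ restricts to an isomorphism of snc pairs over $Z^\circ$, and Fact~\ref{fact:pullback} yields a natural isomorphism $\sH|_{\wtilde Z^\circ} \cong \pi^*\sG|_{\wtilde Z^\circ}$, hence $(\pi_*\sH)|_{Z^\circ} \cong \sG|_{Z^\circ}$. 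The complement $(Z,\Delta)_{\sing} = Z \setminus Z^\circ$ has codimension at least two in the normal variety $Z$. By the standing assumption that the extension theorem holds for $\Top$-forms on $(Z,\Delta)$, the sheaf $\pi_*\sH$ is reflexive, and $\sG$ is reflexive as well; since two reflexive sheaves agreeing away from a set of codimension at least two are isomorphic, the isomorphism over $Z^\circ$ extends uniquely to an isomorphism $\alpha : \sG \to \pi_*\sH$.

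Next I would apply adjunction. The isomorphism $\alpha$ corresponds under $\Hom_Z(\sG, \pi_*\sH) \cong \Hom_{\wtilde Z}(\pi^*\sG, \sH)$ to a morphism $\tilde\alpha : \pi^*\sG \to \sH$, namely the composite of $\pi^*\alpha$ with the counit $\pi^*\pi_*\sH \to \sH$. Because $\sH$ is reflexive, $\tilde\alpha$ factors through the reflexive hull and produces $\phi : \pi^{[*]}\sG \to \sH$. Restricting to the dense open set $\wtilde Z \setminus \Exc(\pi)$, where $\pi$ is an isomorphism onto a locus over which $(Z,\Delta)$ is snc and all the maps above reduce to the tautological identifications, one sees that $\phi$ is an isomorphism. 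Hence $\phi$ is injective on a dense open set; as $\pi^{[*]}\sG$ is torsion-free, its kernel is supported in positive codimension and therefore vanishes, so $\phi$ is the sought-for embedding.

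The main obstacle, and the only place where the hypothesis is used, is the step from the codimension-one isomorphism over $Z^\circ$ to the global isomorphism $\pi_*\sH \cong \sG$: this rests entirely on the reflexivity of $\pi_*\sH$, which is precisely the extension theorem for $\Top$-forms on $(Z,\Delta)$ in the sense of Definition~\ref{def:extensionthmholds}. Everything else is formal. In writing this up I would verify the two supporting points explicitly: that $\Top \Omega^1_Z(\log \Delta)$ is reflexive (automatic, since $\Top$ incorporates a reflexive hull) and that $(Z,\Delta)_{\sing}$ has codimension at least two, which holds because $Z$ is normal and the non-snc locus of a reduced divisor on a smooth variety is of codimension at least two.
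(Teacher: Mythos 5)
Your proposal is correct and takes essentially the same route as the paper's proof: both identify $\Top \Omega^1_Z(\log \Delta)$ with $\pi_* \Top \Omega^1_{\wtilde Z}(\log (\wtilde \Delta + E_\Delta))$ using reflexivity of both sides (the right-hand side being reflexive precisely by the extension-theorem hypothesis) together with agreement outside a set of codimension at least two on the normal variety $Z$, then produce the map out of $\pi^{[*]} \Top \Omega^1_Z(\log \Delta)$ via the counit $\pi^*\pi_* \to \mathrm{id}$ and the double dual, and conclude injectivity because the kernel is torsion while the source is torsion-free. The only differences are presentational: you phrase the codimension-one agreement over the snc locus $(Z,\Delta)_{\reg}$ rather than over $Z \setminus \pi(\Exc(\pi))$, and you spell out the adjunction and the factorization through the reflexive hull, steps the paper leaves implicit.
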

\begin{proof}
  As $\pi$ induces an isomorphism $\wtilde Z \setminus \Exc(\pi)
  \simeq Z \setminus \pi \bigl( \Exc(\pi) \bigr)$, the assumption that
  the extension theorem holds for $\Top$-forms on $(Z, \Delta)$
  immediately implies that
  $$
  \Top \Omega^1_Z(\log \Delta) \simeq \pi_*\Top \Omega^1_{\wtilde Z}(\log
  (\wtilde \Delta + E_\Delta)),
  $$
  because both sides are reflexive and agree in codimension one and
  $Z$ is $S_2$ since it is normal.  Consequently, we obtain a morphism
  $$
  \pi^*\Top \Omega^1_Z(\log \Delta)\simeq \pi^*\pi_* \Top
  \Omega^1_{\wtilde Z}(\log (\wtilde \Delta + E_\Delta)) \to \Top
  \Omega^1_{\wtilde Z}(\log (\wtilde \Delta + E_\Delta)),
  $$
  which is an isomorphism, in particular an embedding, on ${\wtilde
    Z}\setminus \Exc(\pi)$. This remains true after taking the double
  dual of these sheaves.  Therefore the kernel of the map $\pi^{[*]}
  \Top \Omega^1_Z(\log \Delta) \to \Top \Omega^1_{\wtilde Z}(\log
  (\wtilde \Delta + E_\Delta))$ is a torsion sheaf. Since $\pi^{[*]}
  \Top \Omega^1_Z(\log \Delta)$ is torsion-free, this implies the
  statement.
\end{proof}

It is well understood that tensor operations commute with pull-back.
However, this is generally not true for reflexive tensor operations
cf.~\cite{Hassett-Kovacs04}.  Thus, if we are in the setup of
Proposition~\ref{prop:sheafextension1} and if $\sA \subset \Top
\Omega^1_Z(\log \Delta)$ is any sheaf, it is generally not at all
clear if the embedding~\eqref{eq:embed} induces a map between
reflexive tensor products,
$$
\xymatrix{ \pi^{[*]} \sA^{[m]} \ar[r]^(.3){\exists?} & \Sym^m \Top
  \Omega^1_{\wtilde Z}(\log (\wtilde \Delta + E_\Delta)).  }
$$
If the sheaf $\sA$ is invertible, we can obviously say more.

\begin{lem}\label{lem:l1}
  In the setup of Proposition~\ref{prop:sheafextension1}, let $\sA
  \subset \Top \Omega^1_Z(\log \Delta)$ be an invertible subsheaf. If
  $m \in \mathbb N$ is arbitrary, then the embedding~\eqref{eq:embed}
  induces a map
  \begin{equation}\label{eq:emb}
    \pi^{[*]} \sA^{[m]} \into \Sym^m \Top \Omega^1_{\wtilde Z}(\log (\wtilde \Delta +
    E_\Delta)).
  \end{equation}
\end{lem}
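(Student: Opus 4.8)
The plan is to exploit the fact that, because $\sA$ is invertible, all the reflexive tensor operations applied to it collapse into ordinary ones. This is exactly what rescues us from the failure of reflexive operations to commute with pull-back that is flagged in the discussion preceding the lemma, and it lets us obtain~\eqref{eq:emb} simply by applying the covariant functor $\Sym^m$ to an honest inclusion of sheaves.

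First I would record the elementary identifications on the source side. Since $\sA$ is invertible, its pull-back $\pi^*\sA = \pi^{[*]}\sA$ is again invertible, and for every $m$ one has $\sA^{[m]} = \sA^{\otimes m}$ together with
$$
\pi^{[*]}\sA^{[m]} = (\pi^*\sA)^{\otimes m} = \Sym^m\bigl(\pi^{[*]}\sA\bigr),
$$
all of these sheaves being invertible and hence already reflexive. This is the only place where the invertibility of $\sA$ is used, and it is the single substantive point of the proof.

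Next I would build the map for $m = 1$. Pulling back the given inclusion $\iota : \sA \into \Top\Omega^1_Z(\log\Delta)$, composing with the canonical map to the reflexive hull, and then with the embedding~\eqref{eq:embed} of Proposition~\ref{prop:sheafextension1} (call it $\Phi$), yields a morphism
$$
j : \pi^{[*]}\sA = \pi^*\sA \longrightarrow \Top\Omega^1_{\wtilde Z}(\log(\wtilde\Delta + E_\Delta)).
$$
On $\wtilde Z \setminus \Exc(\pi) \cong (Z,\Delta)_{\reg}$ the morphism $\Phi$ is an isomorphism and $\iota$ is the inclusion of a line subbundle, so $j$ is nonzero there. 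As its source is invertible and its target is locally free, hence torsion-free, on the smooth variety $\wtilde Z$, the kernel of $j$ is a torsion subsheaf of an invertible sheaf and therefore vanishes; thus $j$ is injective.

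Finally I would apply the functor $\Sym^m$ to $j$. This produces a morphism $\Sym^m(\pi^{[*]}\sA) \to \Sym^m\Top\Omega^1_{\wtilde Z}(\log(\wtilde\Delta + E_\Delta))$, which by the identifications of the first step is precisely a map $\pi^{[*]}\sA^{[m]} \to \Sym^m\Top\Omega^1_{\wtilde Z}(\log(\wtilde\Delta + E_\Delta))$ as in~\eqref{eq:emb}; since $\wtilde Z$ is smooth, the target is locally free and $\Sym^m$ agrees with $\Sym^{[m]}$. Injectivity follows as before: over the dense open set where $j$ is an inclusion of a subbundle, $\Sym^m j$ is again injective, so the induced map is generically injective, and being a map from the invertible sheaf $\pi^{[*]}\sA^{[m]}$ into a torsion-free sheaf it is injective. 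I do not anticipate any serious obstacle here; the content is careful bookkeeping of reflexive hulls, with the only genuinely substantive ingredient being the collapse of reflexive operations for invertible $\sA$ recorded in the first step—the very feature that makes the statement true in this case but false in general.
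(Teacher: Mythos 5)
Your proposal is correct and follows the same route as the paper: the paper's proof likewise observes that invertibility of $\sA$ collapses the reflexive operations, giving $\sA^{[m]} = \sA^{\otimes m}$ and $\pi^{[*]}\sA^{[m]} \cong (\pi^*\sA)^{\otimes m}$, and then derives the map from the embedding of Proposition~\ref{prop:sheafextension1}. The only difference is that you spell out the injectivity checks (torsion kernel into a torsion-free target, generic injectivity of $\Sym^m$ of a line-subbundle inclusion), which the paper leaves implicit.
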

\begin{proof}
  Since $\sA$ is invertible, all tensor operations on $\sA$ are
  automatically reflexive. In particular, we have that $\sA^{[m]} =
  \sA^{\otimes m}$ and $\pi^{[*]} \sA^{[m]} \cong \pi^*(\sA^{\otimes
    m}) \cong (\pi^*\sA)^{\otimes m}$.  The existence
  of~\eqref{eq:emb} then follows from
  Proposition~\ref{prop:sheafextension1}. 
\end{proof}

\subsection{Proof of Theorem~\ref*{thm:VZsheafextension}}
\label{sec:pfVZsheafext}

We maintain the notation and the assumptions of
Theorem~\ref{thm:VZsheafextension}. By Theorem~\ref{thm:extension-lc}
or Remark~\ref{rem:extthmforfdsap}, respectively, the extension
theorem holds for the pair $(Z, \Delta)$.
Proposition~\ref{prop:sheafextension1} then gives an embedding
$\psi^{[*]}\sA \into \Sym^{n} \Omega^1_{\wtilde Z}(\log (\wtilde
\Delta + E_\Delta))$.  Let $\sC \subset \Top \Omega^1_{\wtilde Z}
\bigl( \log (\wtilde \Delta + E_\Delta) \bigr)$ be the saturation of
the image, which is automatically reflexive by \cite[Lem.~1.1.16 on
p.~158]{OSS}. By \cite[Lem.~1.1.15 on p.~154]{OSS}, $\sC$ is then
invertible as desired. Further observe that for any $m\in\bN$, the
subsheaf $\sC^{\otimes m} \subset \Sym^m \Top \Omega^1_{\wtilde
  Z}(\log (\wtilde \Delta + E_\Delta))$ is likewise saturated.

\subsubsection{Proof of Theorem~\ref*{thm:VZsheafextension} if $(Z, \Delta)$ is finitely dominated by analytic snc pairs}

If Assumption~\iref{il:gpg} of Theorem~\ref{thm:VZsheafextension}
holds and $m \in \mathbb N$ is arbitrary, then again by
Remark~\ref{rem:extthmforfdsap} and
Proposition~\ref{prop:sheafextension1}, there exists an embedding
$$
\bar \iota^{[m]} : \psi^{[*]}\sA^{[m]} \into \Sym^m \Top
\Omega^1_{\wtilde Z} \bigl(\log (\wtilde \Delta + E_\Delta) \bigr).
$$
It is easy to see that $\bar\iota^{[m]}$ factors through
$\sC^{\otimes m}$ as it does so on the open set where $\psi$ is
isomorphic, and because $\sC^{\otimes m}$ is saturated in the locally
free sheaf $\Sym^m \Top \Omega^1_{\wtilde Z} \bigl(\log (\wtilde \Delta
+ E_\Delta) \bigr)$. It follows that $\kappa(\sC) = \kappa(\sA)$. This
completes the proof in the case when Assumption~\iref{il:gpg} holds. \qed

\subsubsection{Proof of Theorem~\ref*{thm:VZsheafextension} if $(Z, \Delta)$ is log canonical}

It remains to consider the case when Assumption~\iref{il:pgp} of
Theorem~\ref{thm:VZsheafextension} holds. Let $m \in \mathbb N$ and
$\sigma \in H^0\bigl(Z,\, \sA^{[m]} \bigr)$ a section.  Then
$\pi^*(\sigma)$ can be seen as a section in $\sC^{\otimes m}$, with
poles along the exceptional set $E := \Exc(\pi)$, i.e.~$\pi^*(\sigma)
\in H^0\bigl( \wtilde Z, \sC^{\otimes m}(*E) \bigr)$.  To show that
$\kappa(\sC) = \kappa(\sA)$, it suffices to prove that $\pi^*(\sigma)$
does not have any poles as a section in $\sC^{\otimes m}$, i.e., that
\begin{equation}\label{eq:nopole}
  \pi^*(\sigma) \in H^0\bigl(\wtilde Z, \sC^{\otimes m} \bigr)
  \subset H^0\bigl(\wtilde Z, \sC^{\otimes m}(*E) \bigr).
\end{equation}
Since $\sC^{\otimes m}$ is saturated in $\Sym^m \Omega^p_{\wtilde
  Z}(\log (\wtilde \Delta + E_\Delta))$, to show~\eqref{eq:nopole}, it
suffices in turn to show that $\pi^*(\sigma)$ does not have any poles
as a section in the sheaf of symmetric differentials, i.e., that
\begin{equation}\label{eq:nopole2}
  \pi^*(\sigma) \in H^0\bigl( \wtilde Z,\, \Sym^m \Omega^p_{\wtilde Z} (\log
  (\wtilde \Delta + E_\Delta)) \bigr).
\end{equation}
Since that question is local in $Z$ in the analytic topology, we can
shrink $Z$, use that $\sA$ is $\Q$-Cartier and assume without loss of
generality that there exists a number $r$ such that $\sA^{[r]} \cong
\sO_Z$. Similar to the construction in the proof of the finite
covering trick, Proposition~\ref{prop:finitecoveringtrick}, we obtain
a commutative diagram
$$
\xymatrix{ (\wtilde X, \wtilde D) \ar[rrr]^{\txt{\scriptsize
      $\wtilde \gamma$, finite}} \ar[d]_{\txt{\scriptsize $\wtilde
      \pi$\\\scriptsize contracts $\wtilde E$}} &&& (\wtilde Z,
  \wtilde \Delta) \ar[d]^{\txt{\scriptsize$\pi$ \\\scriptsize
      log resolution,\\\scriptsize contracts  $E$}} \\
  (X, D) \ar[rrr]_{\txt{\scriptsize $\gamma$, finite}} &&& (Z,\Delta)
}
$$
where $\gamma$ is the index-one-cover associated to $\sA$, $\wtilde X$
is the normalization of the fiber product $X \times_Z {\wtilde Z}$ and
$\wtilde D \subset \wtilde X$ is the reduced preimage of $\wtilde
\Delta$. As before, let
$$
\wtilde E := \Exc(\wtilde \pi) = \supp \bigl( \wtilde\gamma^{-1}(E)
\bigr) = \supp \bigl( \bigl( \gamma \circ \wtilde \pi \bigr)^{-1}
(Z,\Delta)_{\sing} \bigr)
$$
be the exceptional set of the morphism $\wtilde \pi$. Since $\gamma$
is étale away from the singularities of $Z$, the morphism $\wtilde
\gamma$ is étale outside of $E \subset \wtilde \Delta \cup
E_\Delta$. In particular, the pull-back morphism of differentials
gives an isomorphism
$$
\wtilde \gamma^{[*]} \left( \Sym^m \Omega^{p}_{\wtilde Z}(\log
  \wtilde \Delta + E_\Delta) \right) \cong \Sym^{[m]}
\Omega^{[p]}_{\wtilde X} \bigl( \log \wtilde D+ \wtilde E_D \bigr),
$$
where again $\wtilde E_D \subset \wtilde X$ is union of the $\wtilde
\pi$-exceptional divisors not already contained in $\wtilde D$.  In
order to prove~\eqref{eq:nopole2}, it then suffices to show that
\begin{equation}\label{eq:nopole3}
  \wtilde \gamma^{[*]} \bigl( \pi^*(\sigma) \bigr) = \wtilde \pi^{[*]}
  \gamma^{[*]}(\sigma) \in H^0\bigl(\wtilde X,\, \Sym^{[m]}
  \Omega^{[p]}_{\wtilde X}(\log (\wtilde D + \wtilde E_D)) \bigr),
\end{equation}
cf.~case~(\ref{cor:pb2}.\ref{il:pbcrit1}) of
Corollary~\ref{cor:pb2}. Since the pair $(X, D)$ is again log
canonical by \cite[5.20]{KM98}, Theorem~\ref{thm:extension-lc} applies
to show that the extension theorem holds for $(X, D)$. In particular,
Lemma~\ref{lem:l1} applies to the invertible sheaf $\wtilde \sA :=
\gamma^{[*]}(\sA) \subset \Omega^{[p]}_X(\log
D)$. Inclusion~\eqref{eq:nopole3} follows if one applies the embedding
$$
\wtilde \pi^{[*]} \left( \wtilde \sA^{[m]} \right) \into \Sym^m \Top
\Omega^1_{\wtilde X} \bigl( \log (\wtilde D + \wtilde E_D) \bigr)
$$
to the section $\wtilde \sigma := \gamma^{[*]}(\sigma) \in H^0\bigl(
X,\, \wtilde \sA \bigr)$. This completes the proof of
Theorem~\ref{thm:VZsheafextension} in the case when
Assumption~\iref{il:pgp} holds. \qed

\appendix
\part{APPENDIX}

\section{Duality for cohomology with support}\label{Appendix}

The proof of Proposition~\ref{prop:isolatedextension} relies on the
following version of Hartshorne's Formal Duality Theorem. Since this
is not exactly the version contained in the main reference
\cite{Hartshorne1970}, we recall the relevant facts and include a full
proof for the reader's convenience.

\begin{thm}[\protect{Formal Duality, \cite[Thm.~3.3]{Hartshorne1970}}]\label{thm:formalduality}
  Let $\pi: \wtilde Z \to Z$ be a projective birational morphism of
  quasi-projective varieties, where $\wtilde Z$ is non-singular and
  $Z$ is normal. Let $z \in Z$, and $F := \pi^{-1}(z)$ the fiber over
  $z$. Then, for any locally free sheaf $\sF$ on $\wtilde Z$ and any
  number $0\leq j\leq n$, the exists a canonical isomorphism
  $$
  \left(R^j\pi_* \sF_z\right)^{\widehat\ } \cong
  H^{n-j}_{F}\bigl(\wtilde Z,\, \sF^*\otimes \omega_{\wtilde Z}\bigr)^*,
  $$
  where $\, \widehat\ \,$ denotes completion with respect to the
  maximal ideal $\mathfrak{m}_z$ of the point $z \in Z$.
\end{thm}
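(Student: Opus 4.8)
The plan is to combine three standard tools --- Grothendieck's theorem on formal functions, the description of local cohomology with supports as a direct limit of $\Ext$-groups, and Serre duality on the smooth variety $\wtilde Z$ --- and then to pass carefully to a limit. Write $A := \sO_{Z,z}$ with maximal ideal $\mathfrak{m}_z$, let $F_\nu \subset \wtilde Z$ be the closed subscheme defined by $\mathfrak{m}_z^\nu \sO_{\wtilde Z}$, and set $\sF_\nu := \sF \otimes \sO_{F_\nu}$ and $V_\nu := H^j(F_\nu, \sF_\nu)$. First I would invoke the theorem on formal functions, cf.~\cite[III, Thm.~11.1]{Ha77}, which identifies the left-hand side with an inverse limit of finite-dimensional cohomology groups,
$$
\bigl( R^j\pi_*\sF \bigr)_z^{\widehat{\ }} \;\cong\; \varprojlim_\nu V_\nu.
$$
This is legitimate because $\pi$ is proper and $\sF$ is coherent, and it reduces the task to a statement about the inverse system $\{V_\nu\}_\nu$.

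Next I would analyze the right-hand side. The key observation is that local cohomology with supports in $F$ is a direct limit of $\Ext$-groups over the infinitesimal thickenings: for $\sG := \sF^*\otimes\omega_{\wtilde Z}$ one has $H^i_F(\wtilde Z, \sG) \cong \varinjlim_\nu \Ext^i_{\sO_{\wtilde Z}}(\sO_{F_\nu}, \sG)$. At each finite level I would apply Serre duality on the smooth $n$-dimensional $\wtilde Z$. Since $\sF$ is locally free, $\Ext^i_{\sO_{\wtilde Z}}(\sO_{F_\nu}, \sF^*\otimes\omega_{\wtilde Z}) \cong \Ext^i_{\sO_{\wtilde Z}}(\sF\otimes\sO_{F_\nu}, \omega_{\wtilde Z})$; because $\sF\otimes\sO_{F_\nu}$ is supported on the proper fiber $F$, Serre duality applies and yields a natural isomorphism
$$
\Ext^i_{\sO_{\wtilde Z}}\bigl(\sO_{F_\nu},\, \sF^*\otimes\omega_{\wtilde Z}\bigr) \;\cong\; H^{n-i}\bigl(F_\nu,\, \sF_\nu\bigr)^*.
$$

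Finally I would pass to the limit. Taking $\varinjlim_\nu$ and setting $i = n-j$ gives $H^{n-j}_F(\wtilde Z, \sF^*\otimes\omega_{\wtilde Z}) \cong \varinjlim_\nu V_\nu^*$. Dualizing once more and using that $\Hom(-,\C)$ always converts $\varinjlim$ into $\varprojlim$, together with the finite-dimensionality of each $V_\nu$ (so that $V_\nu^{**}\cong V_\nu$), I would obtain $H^{n-j}_F(\wtilde Z, \sF^*\otimes\omega_{\wtilde Z})^* \cong \varprojlim_\nu V_\nu$, which is exactly the left-hand side by the first step. Note that this direction dualizes the \emph{direct} limit, so no Mittag--Leffler or surjectivity hypothesis on the transition maps is needed.

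I expect the main obstacle to lie in the compatibility and limit bookkeeping rather than in any single cohomological input. Concretely, one must verify that the transition maps of the inverse system $\{H^{n-i}(F_\nu,\sF_\nu)\}$ correspond, under Serre duality, to those of the direct system $\{\Ext^i(\sO_{F_\nu},\sG)\}$ --- that is, that the finite-level dualities are natural in $\nu$ --- so that the isomorphisms survive passage to the limit. A secondary point needing care is the precise formulation of Serre duality for sheaves with proper support on the non-compact $\wtilde Z$ and for the non-reduced schemes $F_\nu$; this is handled by restricting to a relatively compact neighborhood of $F$ (or by compactifying) and using that all the relevant cohomology is unchanged. Once these compatibilities are in place, the chain of isomorphisms assembles into the asserted canonical isomorphism.
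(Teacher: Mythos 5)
Your proposal is correct and follows essentially the same route as the paper's proof: the theorem on formal functions, Serre duality at each infinitesimal neighborhood $F_\nu$ combined with the adjunction $\Ext^i(\sF\otimes\sO_{F_\nu},\omega_{\wtilde Z})\cong\Ext^i(\sO_{F_\nu},\sF^*\otimes\omega_{\wtilde Z})$, Hartshorne's description of $H^i_F$ as $\varinjlim_\nu\Ext^i(\sO_{F_\nu},-)$, and the exchange of duals with limits. The only cosmetic difference is that the paper first compactifies $Z$ and $\wtilde Z$ (justified by excision for local cohomology) so that ordinary Serre duality on a projective variety applies --- exactly the fallback you mention parenthetically --- and it runs the chain of isomorphisms from the left-hand side rather than dualizing the right-hand side.
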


We recall a few facts before giving the proof.

\begin{fact}[\protect{Excision for local cohomology, \cite[III~Ex.~2.3f]{Ha77}}]\label{fact:excision}
  Let $Z$ be an algebraic variety, $Y$ a subvariety and $U \subseteq
  Z$ an open subset that contains $Y$. If $i$ is any number and $\sF$
  any sheaf, then there exists a canonical isomorphism $H^i_Y(Z,\,
  \sF) \cong H^i_Y(U,\, \sF\resto{U})$. \qed
\end{fact}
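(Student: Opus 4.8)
The plan is to realize both sides as the right derived functors of the "sections supported in $Y$" functor and to exploit that this functor is insensitive to the ambient space once one restricts attention to an open neighborhood of $Y$. Write $\Gamma_Y(Z, \sF) := \{ s \in \Gamma(Z, \sF) : \supp(s) \subseteq Y \}$ for the left-exact functor whose right derived functors are the groups $H^i_Y(Z, -)$, and likewise for $U$. Everything will follow once I produce a single injective resolution that simultaneously computes both.

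First I would establish a natural isomorphism of functors
$$
\Gamma_Y(Z, \sF) \cong \Gamma_Y\bigl(U,\, \sF|_U\bigr)
$$
on the category of sheaves on $Z$. The map from left to right is restriction: since $Y \subseteq U$, a section supported in $Y$ restricts to a section of $\sF|_U$ that is still supported in $Y$. For the inverse, I would use that $Z = U \cup (Z \setminus Y)$ is an open cover, and that a section $t \in \Gamma_Y(U, \sF|_U)$ vanishes on the overlap $U \setminus Y$ because $\supp(t) \subseteq Y$; hence $t$ and the zero section on $Z \setminus Y$ agree on overlaps and glue, by the sheaf axiom, to a section of $\sF$ over $Z$ supported in $Y$. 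These two constructions are mutually inverse and manifestly natural in $\sF$.

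Next I would pass to derived functors by choosing one injective resolution and restricting it. Let $j: U \hookrightarrow Z$ denote the open immersion and $\sF \to \sI^\bullet$ an injective resolution of $\sF$ on $Z$. The restriction functor $j^* = (-)|_U$ is right adjoint to the extension-by-zero functor $j_!$, which is exact (it is exact on stalks); hence $j^*$ preserves injectives, and $\sF|_U \to \sI^\bullet|_U$ is again an injective resolution, this time on $U$. Applying the functorial isomorphism of the previous paragraph term by term yields an isomorphism of complexes $\Gamma_Y(Z, \sI^\bullet) \cong \Gamma_Y(U, \sI^\bullet|_U)$ commuting with the differentials by naturality, so that
$$
H^i_Y(Z, \sF) = H^i\bigl(\Gamma_Y(Z, \sI^\bullet)\bigr) \cong H^i\bigl(\Gamma_Y(U, \sI^\bullet|_U)\bigr) = H^i_Y\bigl(U,\, \sF|_U\bigr),
$$
which is the desired isomorphism, and one checks its independence of the chosen resolution in the usual way.

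The only delicate point is the preservation of injectivity under restriction to $U$; everything else is formal. This is precisely where the adjunction $j_! \dashv j^*$ and the exactness of $j_!$ enter, and it is what guarantees that a single resolution computes both local cohomology groups, allowing the elementary section-level isomorphism to propagate to all cohomological degrees. (If one prefers to work with $\sO_Z$-modules rather than abelian sheaves, the argument is unchanged, since $j_!$ on modules is likewise exact and left adjoint to $j^*$.)
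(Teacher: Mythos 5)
Your proof is correct and complete. Note, however, that the paper offers no argument of its own for this Fact: it is quoted verbatim from \cite[III~Ex.~2.3f]{Ha77} and marked as proved by that citation, so there is no internal proof to compare against. Your argument is essentially the standard solution to Hartshorne's exercise. The identification $\Gamma_Y(Z,\sF) \cong \Gamma_Y(U,\sF|_U)$ --- restriction one way, gluing against the zero section on the open set $Z \setminus Y$ the other way --- is the heart of the matter, and your derived-functor bookkeeping is sound: $j_!$ is exact and left adjoint to $j^*$, so $j^*$ preserves injectives, so a single injective resolution on $Z$ computes both sides. The only variation worth mentioning is that the more traditional route (and the one Hartshorne's exercise suggests) uses flasque resolutions instead: the restriction of a flasque sheaf to $U$ is flasque, and flasque sheaves are acyclic for $\Gamma_Y$. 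The two devices serve the identical purpose of producing one resolution that computes both local cohomology groups; yours has the small advantage of invoking only the definition of derived functors, at the cost of the adjunction argument. One implicit hypothesis you should state: $Y$ must be closed in $Z$ (as it is in the paper's application, where $Y$ is a fiber of a morphism), since your gluing step needs $Z \setminus Y$ to be open.
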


\begin{fact}[\protect{Serre duality on $\wtilde Z$, \cite[III~Thm.7.6]{Ha77}}]\label{fact:serreDuality}
  Let $\wtilde Z$ be a non-singular projective variety of dimension
  $n$. Then there exists a canonical isomorphism
  $$
  H^j(\wtilde Z, \sG) \cong \left(\Ext_{\wtilde Z}^{n-j}(\sG, \omega_{\wtilde
      Z}) \right)^*
  $$
  for all $j\geq 0$, and for every coherent sheaf $\sG$ on $\wtilde
  Z$. \qed
\end{fact}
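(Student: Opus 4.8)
The plan is to follow the classical three-step route to Serre duality: first prove the statement by explicit computation on projective space, then promote it from line bundles to arbitrary coherent sheaves by a universality argument for $\delta$-functors, and finally transfer it to $\wtilde Z$ through a projective embedding, using that a smooth variety is Cohen--Macaulay. Throughout, $k$ denotes the base field.

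First I would treat the case $\wtilde Z = \bP^n$. Here $\omega_{\bP^n} \cong \sO_{\bP^n}(-n-1)$ and the cohomology of every line bundle $\sO(d)$ is known explicitly. A trace isomorphism $H^n(\bP^n, \omega_{\bP^n}) \cong k$ renders the cup-product pairing $H^j(\bP^n, \sF) \times H^{n-j}(\bP^n, \sF^\vee \otimes \omega_{\bP^n}) \to H^n(\bP^n, \omega_{\bP^n}) \cong k$ perfect for every line bundle $\sF = \sO(d)$ and every $j$. Since $\Ext^{n-j}_{\bP^n}(\sF, \omega_{\bP^n}) \cong H^{n-j}(\bP^n, \sF^\vee \otimes \omega_{\bP^n})$ for locally free $\sF$, this is exactly the asserted isomorphism for line bundles, and hence for every finite direct sum of line bundles.

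To pass to an arbitrary coherent sheaf $\sG$ on $\bP^n$, I would observe that the two families $\{H^{n-i}(\bP^n, -)^*\}_{i \ge 0}$ and $\{\Ext^i_{\bP^n}(-, \omega_{\bP^n})\}_{i \ge 0}$ are contravariant $\delta$-functors on coherent sheaves which agree in degree $i = 0$ via the trace pairing above. Both are coeffaceable in positive degrees: any coherent $\sG$ admits a surjection $\sE \onto \sG$ with $\sE$ a finite direct sum of sheaves $\sO(-q)$, $q \gg 0$, and for such $\sE$ both $H^{n-i}(\bP^n, \sE)^*$ and $\Ext^i_{\bP^n}(\sE, \omega_{\bP^n})$ vanish for $i > 0$. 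Hence both $\delta$-functors are universal, and the degree-zero identification extends uniquely to a natural isomorphism in every degree. This settles the theorem on $\bP^n$.

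Finally I would transfer the result to $\wtilde Z$. Fix a closed embedding $\iota\colon \wtilde Z \into \bP := \bP^N$ and set $r := \codim(\wtilde Z, \bP) = N - n$. Since $\wtilde Z$ is smooth, hence Cohen--Macaulay of pure codimension $r$, the sheaves $\mathcal{E}xt^j_\bP(\iota_* \sO_{\wtilde Z}, \omega_\bP)$ vanish for $j \ne r$, while the adjunction formula for the canonical sheaf identifies the single surviving term $\mathcal{E}xt^r_\bP(\iota_* \sO_{\wtilde Z}, \omega_\bP)$ with $\iota_* \omega_{\wtilde Z}$. Because only one $\mathcal{E}xt$-sheaf is non-zero, the standard $\Ext$-shifting argument yields a natural isomorphism $\Ext^i_{\wtilde Z}(\sG, \omega_{\wtilde Z}) \cong \Ext^{i+r}_\bP(\iota_* \sG, \omega_\bP)$ for every coherent $\sG$ on $\wtilde Z$. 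Combining this with the duality on $\bP$ proven above and with the equality $H^{n-i}(\wtilde Z, \sG) = H^{N-(i+r)}(\bP, \iota_* \sG)$ (note $N-(i+r) = n-i$) gives the chain $\Ext^i_{\wtilde Z}(\sG, \omega_{\wtilde Z}) \cong H^{n-i}(\wtilde Z, \sG)^*$, which is the desired statement after reindexing $j = n-i$. The main obstacle is the universality step on $\bP^n$—verifying that the connecting homomorphisms of the two $\delta$-functors are compatible and that $\Ext^i(-, \omega_{\bP^n})$ is genuinely coeffaceable—together with the Cohen--Macaulay identification of $\mathcal{E}xt^r$ in the transfer step, which is precisely where the smoothness of $\wtilde Z$ is indispensable.
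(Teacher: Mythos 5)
The paper does not actually prove this statement: it is recorded as a \emph{Fact}, whose proof is the citation to \cite[III~Thm.~7.6]{Ha77}. So the only meaningful comparison is with the proof in that reference, and your proposal is correct and is essentially a reconstruction of it: explicit duality for line bundles on $\bP^n$, promotion to arbitrary coherent sheaves by universality of $\delta$-functors (coeffaceability via surjections from sums of $\sO(-q)$, $q \gg 0$), and transfer to $\wtilde Z$ through a closed embedding $\iota \colon \wtilde Z \into \bP^N$, with smoothness entering through the Cohen--Macaulay property. The one point where you genuinely diverge from Hartshorne is the transfer step: he defines the dualizing sheaf $\omega^{\circ}_{\wtilde Z} := \mathcal{E}xt^r_{\bP^N}(\iota_*\sO_{\wtilde Z}, \omega_{\bP^N})$, proves only the degree-zero duality on $\wtilde Z$ via the Ext-shift, and then re-runs the $\delta$-functor universality argument on $\wtilde Z$ itself (this is where Cohen--Macaulayness enters for him); you instead assert the shift isomorphism $\Ext^i_{\wtilde Z}(\sG, \omega_{\wtilde Z}) \cong \Ext^{i+r}_{\bP^N}(\iota_*\sG, \omega_{\bP^N})$ in all degrees at once. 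That is correct, but it is not purely formal: it rests on the change-of-rings spectral sequence $\Ext^p_{\wtilde Z}\bigl(\sG,\, \mathcal{E}xt^q_{\bP^N}(\iota_*\sO_{\wtilde Z}, \omega_{\bP^N})\bigr) \Rightarrow \Ext^{p+q}_{\bP^N}(\iota_*\sG, \omega_{\bP^N})$, which collapses because only $q = r$ survives; in exchange you are spared the second universality argument, so your route is shorter at the cost of heavier machinery. One small gap you should close in your second step: before universality can be invoked on $\bP^n$, the degree-zero isomorphism $\Hom(\sG, \omega_{\bP^n}) \cong H^n(\bP^n, \sG)^*$ must be known for \emph{all} coherent $\sG$, not only for finite sums of line bundles; this follows by choosing a presentation $\sE_1 \to \sE_0 \to \sG \to 0$ by sums of line bundles and applying the five lemma to the two left-exact contravariant functors, and only then does the extension-of-isomorphisms argument for universal $\delta$-functors apply.
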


\begin{fact}[\protect{Approximation of cohomology with support, \cite[Thm~2.8]{Hartshorne1967}}]\label{fact:approx}
  In the notation of Theorem~\ref{thm:formalduality} above, if $\sI$
  is any sheaf of ideals defining the subset $F \subseteq \wtilde Z$,
  the local cohomology groups with support on $F$ and values in a
  coherent algebraic sheaf $\sG$ can be computed as follows:

  \hfill $H^j_F\bigl(\wtilde Z, \sG \bigr) = \underset{\underset{m}{\to}}{\lim}
  \;\Ext_{\wtilde Z}^j \bigl(\factor \sO_{\wtilde Z}.\sI^m., \sG \bigr).$\hfill \qed
\end{fact}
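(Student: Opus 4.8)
The plan is to reduce the assertion to two elementary ingredients: that the section-with-support functor already coincides, \emph{before} deriving, with a filtered colimit of $\Hom$-functors, and that filtered colimits commute with the formation of cohomology. Throughout, $\wtilde Z$ is Noetherian (being quasi-projective over a field), $F \subseteq \wtilde Z$ is closed with defining ideal sheaf $\sI$, the group $H^j_F(\wtilde Z, -)$ is the $j$-th right derived functor of the left-exact functor $\Gamma_F(\wtilde Z, -)$ of global sections supported on $F$, and $\Ext^j_{\wtilde Z}(\sO_{\wtilde Z}/\sI^m, -)$ is the $j$-th right derived functor of $\Hom_{\wtilde Z}(\sO_{\wtilde Z}/\sI^m, -)$.

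First I would establish the underived identity
$$
\Gamma_F(\wtilde Z, \sG) \;=\; \varinjlim_m \Hom_{\wtilde Z}\bigl( \sO_{\wtilde Z}/\sI^m,\, \sG \bigr),
$$
in which the transition maps arise, contravariantly, from the surjections $\sO_{\wtilde Z}/\sI^{m+1} \to \sO_{\wtilde Z}/\sI^m$ and are therefore injective. For each fixed $m$, the right-hand group is exactly the subgroup of sections $s \in \Gamma(\wtilde Z, \sG)$ annihilated by $\sI^m$. A global section lies in $\Gamma_F(\wtilde Z, \sG)$ precisely when it vanishes on $\wtilde Z \setminus F$, i.e.\ when its support is contained in $F$. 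Since $\sG$ is coherent and $\wtilde Z$ is Noetherian, the annihilator of such a section contains a power of $\sI$ on each member of a finite affine cover; taking the largest of these finitely many exponents shows that the section is killed by a single power $\sI^m$. This yields the inclusion ``$\subseteq$'', and the reverse inclusion is clear, so the displayed identity holds.

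Next I would pass to derived functors. Choosing an injective resolution $\sG \to \sI^\bullet$ in the category of $\sO_{\wtilde Z}$-modules, one has $H^j_F(\wtilde Z, \sG) = H^j\bigl( \Gamma_F(\wtilde Z, \sI^\bullet) \bigr)$. Applying the underived identity termwise gives $\Gamma_F(\wtilde Z, \sI^\bullet) = \varinjlim_m \Hom_{\wtilde Z}(\sO_{\wtilde Z}/\sI^m, \sI^\bullet)$, a filtered colimit of complexes of abelian groups. Because filtered colimits of abelian groups are exact, they commute with the formation of cohomology, so
$$
H^j_F(\wtilde Z, \sG) = \varinjlim_m H^j\bigl( \Hom_{\wtilde Z}(\sO_{\wtilde Z}/\sI^m, \sI^\bullet) \bigr) = \varinjlim_m \Ext^j_{\wtilde Z}\bigl( \sO_{\wtilde Z}/\sI^m,\, \sG \bigr),
$$
the final equality holding because global $\Ext$ is computed by an injective resolution of its second argument. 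This is the asserted formula.

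The one genuinely delicate point is the underived identity, and within it the passage from ``supported set-theoretically on $F$'' to ``annihilated by a fixed power of $\sI$''; this is where coherence and Noetherianity are essential, entering through the Nullstellensatz on affine charts together with quasicompactness to make the exponent uniform across the cover. Everything after that is formal: once the underived identity and the exactness of filtered colimits are in hand, no further properties of $\pi$ or of $Z$ are needed beyond $\wtilde Z$ being Noetherian, and the derivation of the general-$j$ statement from the $j=0$ statement is automatic.
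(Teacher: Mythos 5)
The paper offers no proof of this Fact at all---it is quoted with a \qed{} from \cite[Thm.~2.8]{Hartshorne1967}---so your attempt must be measured against the standard proof of that theorem. Your skeleton (degree-zero identity, then injective resolution plus exactness of filtered colimits) is exactly the skeleton of the affine, module-theoretic version of the result, but there is a genuine gap at the step where you apply the degree-zero identity \emph{termwise} to an injective resolution $\sG \to \sJ^\bullet$ in the category of all $\sO_{\wtilde Z}$-modules. You established
$$
\Gamma_F\bigl(\wtilde Z,\, \sG\bigr) \;=\; \varinjlim_m \Hom_{\wtilde Z}\bigl(\sO_{\wtilde Z}/\sI^m,\, \sG\bigr)
$$
only for \emph{coherent} $\sG$, and your argument genuinely uses this. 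The terms of an injective resolution in $\mathrm{Mod}(\sO_{\wtilde Z})$ are neither coherent nor even quasi-coherent, and for such sheaves the identity is not merely unproved but false. Concretely: on $X=\Spec k[t]_{(t)}$ with $F$ the closed point and $\sI=(t)$, let $\sF$ be the $\sO_X$-module with $\sF(X)=k(t)$ and $\sF(X\setminus F)=0$; every global section of $\sF$ is supported on $F$, yet no nonzero section is killed by any power of $t$. Embedding $\sF$ into an injective $\sO_X$-module $\sJ$ yields sections of $\sJ$ supported on $F$ but annihilated by no power of $\sI$, so $\varinjlim_m \Hom(\sO_X/\sI^m, \sJ) \subsetneq \Gamma_F(X,\sJ)$ for that injective. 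Hence the asserted termwise equality $\Gamma_F(\wtilde Z, \sJ^\bullet) = \varinjlim_m \Hom(\sO_{\wtilde Z}/\sI^m, \sJ^\bullet)$ can fail for the resolutions you allow, and everything after it is unfounded. (You also denote the resolution by $\sI^\bullet$, clashing with the ideal sheaf $\sI$; that is cosmetic.)

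The repair is not formal, and it is precisely the content that makes this a citable theorem. First, your degree-zero identity extends verbatim to all \emph{quasi-coherent} sheaves: the annihilator argument on affine charts needs only quasi-coherence and Noetherianity, not coherence. Second---and this is the missing ingredient---on a Noetherian scheme every quasi-coherent sheaf admits a resolution by quasi-coherent sheaves that are injective as $\sO_{\wtilde Z}$-modules; equivalently, injective objects of the category of quasi-coherent sheaves on a locally Noetherian scheme are injective in $\mathrm{Mod}(\sO_{\wtilde Z})$ (Hartshorne, \emph{Residues and Duality}, Ch.~II, \S 7, and part of the preliminaries to \cite[Thm.~2.8]{Hartshorne1967}). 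Running your computation with such a resolution, the termwise identity is legitimate and the filtered-colimit argument finishes the proof; alternatively one shows both sides are universal $\delta$-functors on quasi-coherent sheaves, agreeing in degree zero and effaceable in positive degrees by quasi-coherent Mod-injectives. Either way, the ``genuinely delicate point'' is not where you located it (the degree-zero identity for coherent sheaves), but in producing injective resolutions by sheaves to which that identity actually applies.
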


\begin{fact}[\protect{Theorem on Formal Functions, \cite[Ch.~III.11]{Ha77}}]\label{fact:formalFunctions}
  In the notation of Theorem~\ref{thm:formalduality} above, if $\sJ$
  is the $\sO_{\wtilde Z}$-ideal generated by the image of the maximal
  ideal $\mathfrak{m}_z$ under the natural map $\pi^{-1} \sO_Z \to
  \sO_{\wtilde Z}$, and if $\sG$ is any coherent sheaf on $\wtilde Z$,
  then we have
  $$
  \left(R^j\pi_* \sG_z\right)^{\widehat\ } \cong
  \underset{\underset{m}{\leftarrow}}{\lim}\; H^j\bigl(F_m, \sG_m \bigr),
  $$
  where $F_m = \bigl(F, \factor \sO_{\wtilde Z}.\sJ^m.\bigr)$ is
  the $m$-th infinitesimal neighborhood of the fiber $F$, and where
  $\sG_m = \sG \otimes \factor \sO_{\wtilde Z}.\sJ^m.$. \qed
\end{fact}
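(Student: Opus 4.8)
The plan is to prove the statement along the classical lines of the Theorem on Formal Functions, reducing the assertion to a comparison of two inverse systems of finitely generated modules over a noetherian local ring. First I would localize. Since the formation of $R^j\pi_*\sG$, of the infinitesimal neighborhoods $F_m$ and sheaves $\sG_m$, and of the completion at $\mathfrak{m}_z$ all depend only on an affine neighborhood of $z$, I may assume $Z = \Spec A$ with $A$ noetherian and $z$ corresponding to the maximal ideal $\mathfrak{a} := \mathfrak{m}_z \subset A$. Because $\pi$ is projective, Serre's coherence theorem shows that each $M^j := H^j(\wtilde Z,\, \sG)$ is a finitely generated $A$-module and that $R^j\pi_*\sG$ is the associated quasi-coherent sheaf. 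Hence $(R^j\pi_*\sG_z)^{\widehat{\ }}$ is the $\mathfrak{m}_z$-adic completion of $M^j_{\mathfrak{a}}$, which is canonically the $\mathfrak{a}$-adic completion $(M^j)^{\widehat{\ }} \cong \varprojlim_m M^j/\mathfrak{a}^m M^j$, the last isomorphism holding since $M^j$ is finitely generated over the noetherian ring $A$. On the other side, $\sJ = \mathfrak{a}\sO_{\wtilde Z}$ gives $\sJ^m = \mathfrak{a}^m\sO_{\wtilde Z}$, so $\sG_m = \sG/\mathfrak{a}^m\sG$ and $H^j(F_m,\, \sG_m) = H^j(\wtilde Z,\, \sG/\mathfrak{a}^m\sG)$; thus the right-hand side of the asserted formula is $\varprojlim_m H^j(\wtilde Z,\, \sG/\mathfrak{a}^m\sG)$.

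The task is therefore to produce a natural isomorphism $\varprojlim_m M^j/\mathfrak{a}^m M^j \cong \varprojlim_m H^j(\wtilde Z,\, \sG/\mathfrak{a}^m\sG)$, the natural map being induced by the surjections $\sG \to \sG/\mathfrak{a}^m\sG$. To analyze it I would use the short exact sequences $0 \to \mathfrak{a}^m\sG \to \sG \to \sG/\mathfrak{a}^m\sG \to 0$ and extract, for each $m$, the exact sequence $0 \to Q^j_m \to H^j(\wtilde Z,\, \sG/\mathfrak{a}^m\sG) \to R^{j+1}_m \to 0$, where $Q^j_m := \operatorname{im}\bigl(H^j(\sG) \to H^j(\sG/\mathfrak{a}^m\sG)\bigr)$ is the cokernel of $H^j(\mathfrak{a}^m\sG) \to M^j$, and $R^{j+1}_m := \ker\bigl(H^{j+1}(\mathfrak{a}^m\sG) \to H^{j+1}(\sG)\bigr)$.

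The decisive input is finite generation in the graded setting: the sheaf $\bigoplus_m \mathfrak{a}^m\sG$ is a coherent graded module over the Rees-algebra sheaf $\bigoplus_m \mathfrak{a}^m\sO_{\wtilde Z}$, and since $\pi$ is projective the graded module $\bigoplus_m H^j(\wtilde Z,\, \mathfrak{a}^m\sG)$ is finitely generated over the Rees algebra $\bigoplus_m \mathfrak{a}^m$. The Artin-Rees lemma then yields two consequences. First, the decreasing filtration $\operatorname{im}\bigl(H^j(\mathfrak{a}^m\sG) \to M^j\bigr)$ of $M^j$ is $\mathfrak{a}$-stable, hence Artin-Rees-equivalent to the $\mathfrak{a}$-adic filtration; consequently $\varprojlim_m Q^j_m \cong (M^j)^{\widehat{\ }}$. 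Second, the transition maps of the system $\{R^{j+1}_m\}$ are annihilated by a fixed shift, so $\{R^{j+1}_m\}$ is pro-zero and in particular $\varprojlim_m R^{j+1}_m = 0$ and ${\varprojlim}^{1}_m R^{j+1}_m = 0$. Passing to the inverse limit in the short exact sequences above, via the $\varprojlim^1$ six-term sequence, gives $\varprojlim_m H^j(\wtilde Z,\, \sG/\mathfrak{a}^m\sG) \cong \varprojlim_m Q^j_m \cong (M^j)^{\widehat{\ }}$, which is precisely the desired isomorphism, and one checks it coincides with the natural map.

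The hard part is the finite generation of $\bigoplus_m H^j(\wtilde Z,\, \mathfrak{a}^m\sG)$ as a module over the Rees algebra---a graded refinement of Serre's coherence theorem for projective morphisms---together with the bookkeeping needed to run Artin-Rees simultaneously on the cokernel system $\{Q^j_m\}$, so as to recover the $\mathfrak{a}$-adic filtration, and on the kernel system $\{R^{j+1}_m\}$, so as to show it is pro-zero. Once that graded finiteness is established, the two Artin-Rees applications and the $\varprojlim^1$ argument are routine.
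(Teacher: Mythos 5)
The paper gives no internal proof of this Fact at all: it is quoted as a known result with a reference to \cite[Ch.~III.11]{Ha77} and a \qed, so the only meaningful comparison is with the proof in the cited source. Your argument is correct in outline, but it is not Hartshorne's: his proof reduces to $\wtilde Z \subseteq \bP^r_A$, verifies the statement for the twists $\sO(q)$ by computing both sides explicitly, and then treats an arbitrary coherent $\sG$ by descending induction on $j$, presenting $\sG$ as a quotient of a finite direct sum of twists, with Artin--Rees entering only to compare the induced filtration on the kernel sheaf with its $\sJ$-adic one. What you describe is instead the EGA~III/Grothendieck route, in which everything is funneled through one statement: that $\bigoplus_m H^j\bigl(\wtilde Z,\, \mathfrak{a}^m\sG\bigr)$ is a finite graded module over the Rees algebra $B = \bigoplus_m \mathfrak{a}^m$. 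Granting that, your two Artin--Rees consequences (the image filtration of $M^j$ is $\mathfrak{a}$-stable, hence cofinal with the $\mathfrak{a}$-adic one, and the kernel system is killed by a fixed shift, hence pro-zero) and the $\varprojlim$/$\varprojlim^1$ bookkeeping are exactly right and assemble into the asserted isomorphism via the natural map. The trade-off is clear: your route is more modular and extends to proper morphisms once the graded finiteness is known (via Chow's lemma), whereas Hartshorne's is tied to projective space but self-contained. The one ingredient you leave unproved---and correctly flag as the hard part---is that Rees finiteness lemma; for completeness, it is itself a quick consequence of Serre coherence: $\bigoplus_m \mathfrak{a}^m\sG$ is a coherent graded module over the sheaf of Rees algebras $\sB = \bigoplus_m \sJ^m$, hence corresponds to a coherent sheaf $\sM_W$ on the relative spectrum $W := \Spec_{\wtilde Z}\sB$; the degreewise surjections $\mathfrak{a}^m \otimes_A \sO_{\wtilde Z} \onto \sJ^m$ embed $W$ as a closed subscheme of $\wtilde Z \times_{\Spec A} \Spec B$, so $W \to \Spec B$ is projective, and since $W \to \wtilde Z$ is affine and cohomology on the noetherian space $\wtilde Z$ commutes with direct sums, one gets $\bigoplus_m H^j\bigl(\wtilde Z,\, \mathfrak{a}^m\sG\bigr) \cong H^j\bigl(W,\, \sM_W\bigr)$, a finite $B$-module. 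With that lemma supplied, your proof is complete and correct.
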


\begin{fact}[\protect{\cite[Ch.~III.6, Prop~6.7]{Ha77}}]\label{fact:ext-and-tensor}
  Let $\wtilde Z$ be an algebraic variety. For coherent sheaves $\sM$
  and $\sN$ on $\wtilde Z$, we have
  $$
  \Ext_{\wtilde Z}^j(\sF \otimes \sM, \sN) \cong \Ext_{\wtilde
    Z}^j(\sM, \sF^* \otimes \sN)
  $$
  for every locally free sheaf $\sF$ on $\wtilde Z$. \qed
\end{fact}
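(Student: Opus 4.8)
The plan is to reduce the claim to the tensor--hom adjunction in degree $j = 0$ and then propagate it to higher degrees by a derived-functor argument, the pivotal point being that tensoring with a locally free sheaf of finite rank preserves injectivity. Throughout I fix $\sF$ and $\sM$ and regard both sides as functors of the remaining variable $\sN$.

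First I would treat the case $j = 0$. Since $\sF$ is locally free of finite rank, there is a natural isomorphism of sheaves
$\mathcal{H}om_{\wtilde Z}(\sF \otimes \sM,\, \sN) \cong \mathcal{H}om_{\wtilde Z}(\sM,\, \mathcal{H}om_{\wtilde Z}(\sF,\, \sN)) \cong \mathcal{H}om_{\wtilde Z}(\sM,\, \sF^* \otimes \sN)$,
where the first isomorphism is the tensor--hom adjunction (using commutativity of $\otimes$) and the second uses $\mathcal{H}om_{\wtilde Z}(\sF, \sN) \cong \sF^* \otimes \sN$, valid because $\sF$ is locally free of finite rank. Applying the global-sections functor $\Gamma(\wtilde Z, -)$ yields $\Hom(\sF \otimes \sM, \sN) \cong \Hom(\sM, \sF^* \otimes \sN)$, and the construction is manifestly natural in $\sN$.

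Next I would pass to higher $j$ by choosing an injective resolution $\sN \to \sI^\bullet$ and comparing the two derived functors. The group $\Ext^j_{\wtilde Z}(\sF \otimes \sM, \sN)$ is the $j$-th cohomology of the complex $\Hom(\sF \otimes \sM,\, \sI^\bullet)$, and applying the degree-zero adjunction termwise identifies this complex with $\Hom(\sM,\, \sF^* \otimes \sI^\bullet)$. It then suffices to recognize $\sF^* \otimes \sI^\bullet$ as an injective resolution of $\sF^* \otimes \sN$: exactness of the complex is automatic because $\sF^* \otimes (-)$ is exact (as $\sF^*$ is locally free), so $\sF^* \otimes \sI^\bullet$ resolves $\sF^* \otimes \sN$, and taking cohomology would then compute exactly $\Ext^j_{\wtilde Z}(\sM, \sF^* \otimes \sN)$.

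The hard part---really the only substantive point---will be verifying that each term $\sF^* \otimes \sI^k$ is again injective. To establish this injectivity lemma I would argue that for any $\sO_{\wtilde Z}$-module $\sG$ there are natural isomorphisms $\Hom(\sG,\, \sF^* \otimes \sI^k) \cong \Hom(\sG,\, \mathcal{H}om_{\wtilde Z}(\sF, \sI^k)) \cong \Hom(\sF \otimes \sG,\, \sI^k)$, again using that $\sF$ is locally free of finite rank. The functor $\sG \mapsto \sF \otimes \sG$ is exact and $\Hom(-, \sI^k)$ is exact because $\sI^k$ is injective; hence their composite $\Hom(-,\, \sF^* \otimes \sI^k)$ is exact, which is precisely the statement that $\sF^* \otimes \sI^k$ is injective. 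Since every isomorphism used above is natural, the resulting identification $\Ext^j_{\wtilde Z}(\sF \otimes \sM, \sN) \cong \Ext^j_{\wtilde Z}(\sM, \sF^* \otimes \sN)$ is natural as well, completing the argument.
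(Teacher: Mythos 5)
Your argument is correct and complete. There is, however, no internal proof in the paper to compare against: the statement is quoted as a known Fact from Hartshorne, Ch.~III.6, Prop.~6.7, and cited without proof. What you have written is essentially the standard argument behind that reference: the degree-zero case is the tensor--hom adjunction together with $\mathcal{H}om_{\wtilde Z}(\sF,\sN)\cong\sF^*\otimes\sN$ for $\sF$ locally free of finite rank, and the propagation to all $j$ rests precisely on the point you isolate, namely that $\sF^*\otimes\sI^k$ is again injective because $\Hom(-,\,\sF^*\otimes\sI^k)\cong\Hom(\sF\otimes-,\,\sI^k)$ is a composite of exact functors. One remark worth keeping in mind: since these $\Ext$ groups are computed in the category of all $\sO_{\wtilde Z}$-modules, your injectivity lemma must be (and indeed is) verified against test objects $\sG$ in that full category, not just coherent ones; your argument handles this correctly because tensoring with a locally free sheaf is exact on arbitrary $\sO_{\wtilde Z}$-modules. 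The only cosmetic difference from the textbook proof is that Hartshorne phrases the extension to higher degrees via universality of $\delta$-functors, whereas you compare explicit injective resolutions; the key lemma is identical.
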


\begin{proof}[Proof of Theorem~\ref{thm:formalduality}]
  Using the excision theorem for local cohomology,
  Fact~\ref{fact:excision}, we may compactify $Z$ and $\wtilde Z$ and
  assume without loss of generality that both $Z$ and $\wtilde Z$ are
  projective. By Fact~\ref{fact:formalFunctions}, we have
  \begin{equation}\label{eq:rx1}
    \left(R^j\pi_*\sF_z\right)^{\widehat \ } =
    \underset{\leftarrow}{\lim}\; H^j\bigl(F_m, \sF_m \bigr).
  \end{equation}
  The cohomology group on the right hand side of~\eqref{eq:rx1} is
  computed as follows.
  \begin{align*}
    H^j\bigl(F_m, \sF_m \bigr) &= H^j\bigl(\wtilde Z, \sF_m \bigr) \\
    &\cong \left(\Ext_{\wtilde Z}^{n-j}\bigl(\sF_m, \omega_{\wtilde Z}
      \bigr)\right)^* && \text{by Fact~\ref{fact:serreDuality}}\\
    &\cong \left(\Ext_{\wtilde Z}^{n-j} \bigl(\factor \sO_{\wtilde Z}.\sJ^m.,
      \sF^*\otimes \omega_{\wtilde Z} \bigr) \right)^*&& \text{by
      Fact~\ref{fact:ext-and-tensor}}
  \end{align*}
  Substituting this into~\eqref{eq:rx1}, we obtain
  \begin{align*}
    \left(R^j\pi_*\sF_z\right)^{\widehat \ } &\cong
    \underset{\leftarrow}{\lim}\; \left(\Ext_{\wtilde Z}^{n-j}
      \bigl(\sO_{\wtilde Z}/\sJ^m,
      \sF^*\otimes \omega_{\wtilde Z}\bigr) \right)^* \\
    &= \left(\underset{\to}{\lim}\; \Ext_{\wtilde Z}^{n-j} \bigl(\sO_{\wtilde
        Z}/\sJ^m, \sF^*\otimes \omega_{\wtilde Z}
      \bigr)\right)^*\\
    &=\left(H_F^{n-j}\bigl(\wtilde Z, \sF^* \otimes \omega_{\wtilde Z} \bigr)
    \right)^* &&\text{by Fact~\ref{fact:approx}},
  \end{align*}
  as claimed.
\end{proof}


\providecommand{\bysame}{\leavevmode\hbox to3em{\hrulefill}\thinspace}
\providecommand{\MR}{\relax\ifhmode\unskip\space\fi MR}
\providecommand{\MRhref}[2]{%
  \href{http://www.ams.org/mathscinet-getitem?mr=#1}{#2}
}
\providecommand{\href}[2]{#2}

\end{document}